\documentclass[10pt,a4paper]{article}
\usepackage[numbers,sort&compress]{natbib}
\usepackage[T1]{fontenc}
\usepackage[utf8]{inputenc}
\usepackage{authblk}
\usepackage{amsmath,amssymb,amsthm,esint,bm}%,mathrsfs
\usepackage{mathrsfs}
\usepackage{bookmark}
\usepackage{amsmath}
\usepackage{enumitem}
\allowdisplaybreaks[4]

\newtheorem{definition}{Definition}[section]
\newtheorem{theorem}[definition]{Theorem}
\newtheorem{lemma}[definition]{Lemma}
\newtheorem{proposition}[definition]{Proposition}
\newtheorem{corollary}[definition]{Corollary}
\theoremstyle{remark}
\newtheorem{remark}[definition]{Remark}
\numberwithin{equation}{section}

\newcommand{\abs}[1]{\lvert#1\rvert}
\newcommand{\Abs}[1]{\left\lvert#1\right\rvert}

\renewcommand{\r}{{\mathbb{R}}}
\newcommand{\rn}{{\mathbb{R}^d}}

\allowdisplaybreaks[4]

\title{Sharp regularity for a class of degenerate/singular fully nonlinear elliptic equations with Hamiltonian terms}

\author[a]{Wentao Huo}
\author[a]{Xiaofeng Jin}
\author[b]{Lingwei Ma}
\author[b]{Zhenqiu Zhang\thanks{Corresponding author.}}
\affil[a]{School of Mathematical Sciences, Nankai University, Tianjin 300071, P.R. China}
\affil[b]{School of Mathematical Sciences and LPMC, Nankai University, Tianjin 300071, P.R. China}
\date{\today}
\usepackage{hyperref}
\begin{document}
\maketitle
\footnotetext[1]{E-mail: huowentaoouc@163.com (W. Huo), 1120220040@mail.nankai.edu.cn (X. Jin), malingwei@nankai.edu.cn (L. Ma), zqzhang@nankai.edu.cn (Z. Zhang).}

\begin{abstract}
We investigate the regularity of the  viscosity solutions to a class of degenerate/singular fully nonlinear elliptic equations with Hamiltonian terms. To overcome the difficulty caused by the simultaneous presence of the general degenerate/singular gradient terms and Hamiltonian terms, we  analyze the coupled interplay between the degeneracy/singularity law and the growth of Hamiltonian terms and establish lower regularity results. Finally, we obtain sharp interior $C^{1,\alpha}$ regularity estimates via a geometric tangential method.

Mathematics Subject classification (2020):  35B65; 35J60; 35J70; 35J75; 35D40.

Keywords: Sharp regularity; fully nonlinear degenerate/singular equations; viscosity solution; Hamiltonian terms. \\

\end{abstract}
%\renewcommand{\thepage}{\roman{page}}
%\setcounter{page}{1}

%%\tableofcontents

\section{Introduction}\label{section1}
In this paper, we consider the following second order degenerate/singular fully nonlinear elliptic equations with Hamiltonian terms:
\begin{equation}\label{model}
\Phi(\abs{Du}, x)F(D^2 u, x)+H({Du}, x)=f(x) \quad  \text{in} \quad \Omega ,
\end{equation}
where $\Omega \subset \mathbb{R}^d$ $(d\geq 2)$ is an open and bounded domain. Throughout this paper we assume that $F$, $\Phi$, $f$ and $h$ satisfy the following hypotheses:
\begin{enumerate}[label=(\text{A}\arabic{enumi}),ref=\textbf{A}\arabic{enumi}]
	\item \label{A1} The fully nonlinear operator $F:S^{d}\times \Omega\rightarrow \mathbb{R}$ is continuous and uniformly $(\lambda,\Lambda)$-elliptic in the sense that
	$$\lambda\|N\|\leq F(M+N,x)-F(M,x)\leq \Lambda\|N\|$$
	for some $0<\lambda\leq \Lambda$ and each $M,N\in S^{d}$ with $N\geq 0$. Here $S^{d}$ stands for
	the set of all $d\times d$ real symmetric matrices. Without loss of generality, we assume that $F(0,x) = 0$ for all $x\in \Omega$.
	\item \label{A2}
	We assume a uniform continuity assumption on the coefficients of $F$, namely, there exist constants $C>0$ and $\theta\in(0,1)$ such that
	\begin{equation*}%\label{lianxu}
		{\rm osc}_{F}(x,y):=\sup\limits_{M\in S^{d}\setminus \{0\}}\frac{\Abs{F(M,x)-F(M,y)}}{\|M\|}\leq C|x-y|^{\theta}
	\end{equation*}
	for all $x,y\in \Omega$. For simplicity purposes, we shall often write ${\rm osc}_{F}(x):={\rm osc}_{F}(x,0)$. Moreover, for notation purposes, we denote
	$$C_{F}:=\inf\{C>0:{\rm osc}_{F}(x,y)\leq C|x-y|^{\theta},\;\forall x,y\in \Omega\}.$$	
	\item \label{A3} The function $\Phi:[0,\infty)\times\Omega\rightarrow [0,\infty)$ is a continuous map satisfying the following properties:\\
	(i) there exist constants $s(\Phi)\geq i(\Phi)>-1$ such that the map $t\mapsto \frac{\Phi(t,x)}{t^{i(\Phi)}}$ is almost non-decreasing with constant $L\geq 1$ in $(0,\infty)$ in the sense that
	$$\frac{\Phi(t,x)}{t^{i(\Phi)}}\leq L \frac{\Phi(s,x)}{s^{i(\Phi)}}\quad {\rm whenever \;\;}0<t\leq s<\infty\;{\rm and\;} x\in \Omega,$$
	and the map $t\mapsto \frac{\Phi(t,x)}{t^{s(\Phi)}}$ is almost non-increasing  with constant $L\geq 1$ in $(0,\infty)$ in the sense that
	$$L\frac{\Phi(t,x)}{t^{s(\Phi)}}\geq \frac{\Phi(s,x)}{s^{s(\Phi)}}\quad {\rm whenever \;\;}0<t\leq s<\infty\;{\rm and\;} x\in \Omega;$$	
	(ii) there exist constants $0<\nu_{0}\leq \nu_{1}$ such that $\nu_{0}\leq \Phi(1,x)\leq \nu_{1}$ for all $x\in \Omega$.
	\item \label{A4} The Hamiltonian
	term $H:\mathbb{R}^{d} \times \Omega\rightarrow \mathbb{R}$ is continuous and there exist constants $\mathcal{K},\mathcal{M}>0$ and $0<m\leq 1+i(\Phi)$ such
	that
	\begin{equation}
		|H(t,x)|\leq \mathcal{K}+\mathcal{M}|t|^{m}
	\end{equation}
for every $t\in\mathbb{R}^{d}$, $x\in\Omega$.	
	\item \label{A5} The source term $f$ belongs to $C({\Omega}) \cap L^{\infty}(\Omega)$.
\end{enumerate}

Equations of the form \eqref{main} were introduced by Lions and Lasry in \cite{Lions1985,Lions1989} for the case $F=\Delta$, and later extended to the fully nonlinear setting by Birindelli and Demengel in \cite{Birindelli2004,Birindelli2007CPAA}. These equations often appear in fields such as stochastic games \cite{Attouchi20147,Banerjee2020}, stochastic optimal control problems \cite{Fleming,Leoni}, and image enhancement \cite{Bronzi2020}. The existence and uniqueness of viscosity solutions to such equations were established, see for instance \cite{Birindelli2006,Birindelli2007CPAA,Silva2023}.
Much attention has been devoted to exploring the regularity and qualitative properties of solutions to this class of partial differential equations in the last decades, please refer to \cite{Birindelli2010JDE,Birindelli2012NON,Nornberg2019,Silva-Nornberg2021,Davil2009, Davil2010, Imbert2011JDE,Junges2010,Byun2024,Byun2025} and references therein. Our present paper aims to study interior  H\"{o}lder regularity estimates for viscosity solutions to \eqref{model}.

The development of  H\"{o}lder regularity theory for viscosity solutions of fully nonlinear elliptic equations represents a central achievement in the theory of non-divergence form PDEs. This topic began with Krylov and Safonov's groundbreaking work \cite{Krylov1979,Krylov1980} on linear non-divergence form equations. They established Harnack inequalities and interior H\"{o}lder estimates by means of measure-theoretic arguments. Subsequently, with the viscosity solutions framework established, Caffarelli proved a series of regularity results for viscosity solutions to fully nonlinear equations $F(D^{2}u,x)=f(x)$ in the seminal work \cite{Caffarelli1989} (see also monograph \cite{Caff1} for more details).

In the singular/degenerate fully nonlinear setting, the most celebrated prototype is
\begin{equation}\label{amodel}
	\abs{Du}^{p}F(D^2 u)=f(x) \quad  \text{in} \quad \Omega
\end{equation}
with $p>-1$. We mention the seminal work of Imbert and Silvestre \cite{Imbert1}, where they considered the degenerate case $(p>0)$ and resorted to an improvement-of-flatness approach to prove that viscosity solutions of \eqref{amodel} are locally $C^{1,\alpha}$. Further developments on the regularity theory for related models have been reported in \cite{Ricarte}. Specifically, Ara$\acute{\rm u}$jo, Ricarte and Teixeira investigated the following degenerate elliptic equations with variable coefficient in the unit ball $B_{1}$:
\begin{equation}\label{abmodel}
	\Phi(\abs{Du},x)F(D^2 u,x)=f(x),
\end{equation}
where $f\in L^{\infty}(B_{1})$ and $\Phi:\rn\times B_{1}\rightarrow \r$ degenerate as $\Phi(\abs{Du},x)\sim \abs{Du}^{p}$ for some $p>0$.
Under a suitable uniform continuity assumption on the coefficients of $F$, they conducted a geometric analysis unveiling the optimal regularity of solutions to \eqref{abmodel}, that is,  solutions are shown to be of class $C_{loc}^{1,\alpha}$, for $\alpha=\min\left\{\alpha_{0},\frac{1}{1+p}\right\}$, where $\alpha_{0}$ is the H\"{o}lder exponent coming from the Krylov–Safonov regularity for equation $F(D^{2}u)=0$ (see \cite[Chapter 5]{Caff1}). Since then, these types of regularity results have been extended to various kinds of degenerate/singular fully nonlinear elliptic equations (see \cite{Bronzi2020,Fili, Silva2020,Fang, Silva2023}).  It is noteworthy to mention that the recent paper \cite{Baasandorj} considered the general singular/degenerate operator $\Phi$ satisfying the assumption \eqref{A3} and derived local optimal $C^{1,\alpha}$ regularity of solutions to \eqref{abmodel}.
 
 As to fully nonlinear equations with Hamiltonian terms, Birindelli-Demengel's key works \cite{Birindelli2014ESAIM, Birindelli2015, B-Demengel2016, B-Demengel2019} studied the regularity of solutions to the following fully nonlinear equations
\begin{equation*}
	\abs{Du}^{p}F(D^2 u)+h(x)\abs{Du}^{m} =f(x) \quad  \text{in} \quad \Omega,
\end{equation*}
where $p>-1$, $0<m\leq p+2$, and $f\in C(\Omega)\cap L^{\infty}(\Omega)$. To be precise, they obtained the local $C^{1,\alpha}$ regularity for a universal constant $\alpha\in(0,1)$ in \cite{Birindelli2014ESAIM, Birindelli2015, B-Demengel2016} for the sublinear and linear cases $0<m\leq p+1$, and in \cite{B-Demengel2019} for the superlinear case $p+1<m\leq p+2$. Very recently, Andrade and Nascimento \cite{Andrade} considered the degenerate fully nonlinear equations with Hamiltonian terms of the type
\begin{equation*}
	\abs{Du}^{p}F(D^2 u, x)+h(x)\abs{Du}^{m} =f(x) \quad  \text{in} \quad B_{1},
\end{equation*}
where $p>0$, $0<m\leq 1+p$, and $f,h\in C(B_{1})\cap L^{\infty}(B_{1})$. Under the suitable continuity assumption on the coefficients of $F$, they established the optimal interior $C^{1,\alpha}$ regularity estimates with $\alpha\in (0,\alpha_{0})\cap (0,\frac{1}{1+p}]$ via utilizing perturbation techniques.

To the best of our knowledge, there remains a notable absence of regularity theory for solutions of fully nonlinear elliptic equations simultaneously involving Hamiltonian terms and a quite general degeneracy/singularity law of the type $\Phi(|Du|,\cdot)$. In this paper, our primary focus is on establishing optimal interior H\"{o}lder gradient estimates of solutions to equation \eqref{model}. These estimates not only encompass the $p$-growth and double-phase growth but also include the variable exponent, log-type and Orlicz double-phase growth cases, undoubtedly constituting a valuable addition to the regularity theory of fully nonlinear PDEs.

We now state the main result of this paper.
\begin{theorem}\label{main}
	 Assume that the hypotheses \eqref{A1}-\eqref{A5} hold. Let $u\in C(\Omega)$ be a viscosity solution of \eqref{model} and $\alpha$ be chosen to satisfy
	\begin{equation}\label{exponent}
		\alpha\in \left\{
		\begin{array}{lcl}
			(0,\alpha_{0})\cap \left(0,\frac{1}{1+s(\Phi)}\right] & \text{if} & i(\Phi)\geq 0, \\
			(0,\alpha_{0})\cap \left(0,\frac{1}{1+s(\Phi)-i(\Phi)}\right] &\text{if}&  -1<i(\Phi)<0.
		\end{array}
		\right.
	\end{equation}
	Then $u\in C_{loc}^{1,\alpha}(\Omega)$. More precisely, for any subdomain $\Omega^{\prime}\subset\subset\Omega$, there holds
	\begin{itemize}
		\item [{\rm$({{\rm i}})$}] if $0<m<1+i(\Phi)$, then
	\begin{equation}\label{giji1}
	[u]_{C^{1, \alpha}({\Omega^{\prime}})}\leq C\left(1+\|u\|_{L^{\infty}\left(\Omega\right)}+\left(\frac{\|f\|_{L^{\infty}\left(\Omega\right)}+\mathcal{K}}{\nu_{0}}\right)^{\frac{1}{1+i(\Phi)}}+\left(\frac{\mathcal{M}}{\nu_{0}}\right)^{\frac{1}{1+i(\Phi)-m}}\right),
   \end{equation}
where the constant $C$ depends on $d,\lambda,\Lambda,\alpha,m,\theta,L,i(\Phi),C_{F}$ and ${\rm dist}(\Omega,\Omega^{\prime})$;
		\item [{\rm$({{\rm ii}})$}] if $m=1+i(\Phi)$, then
			\begin{equation}\label{giji2}
			[u]_{C^{1, \alpha}({\Omega^{\prime}})}\leq C\left(1+\|u\|_{L^{\infty}(\Omega)}\right),
		\end{equation}
	where the constant $C$ depends in addition on $\nu_{0}$, $\|f\|_{L^{\infty}\left(\Omega\right)}$, $\mathcal{K}$ and $\mathcal{M}$.
	\end{itemize}
\end{theorem}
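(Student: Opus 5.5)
We follow the geometric tangential strategy of Araújo–Ricarte–Teixeira and Baasandorj, adapted to the Hamiltonian term.

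\textbf{Step 1: normalization.} Work in $B_1\subset\Omega$ and set $v(x)=u(x_0+rx)/\kappa$. The rescaled function solves an equation of the same type, with $\Phi_\kappa(t,x)=\Phi(\kappa t/r,\cdot)/\Phi(\kappa/r,\cdot)$. This new $\Phi_\kappa$ still satisfies \eqref{A3} with the same $L$, $i(\Phi)$, $s(\Phi)$, and with $\Phi_\kappa(1,\cdot)=1$. The Hamiltonian becomes $H_\kappa(\xi,x)=r^2 H(\kappa\xi/r,\cdot)/(\kappa\,\Phi(\kappa/r,\cdot))$. The almost-monotonicity in \eqref{A3} gives $\Phi(\kappa/r)\gtrsim \nu_0(\kappa/r)^{i(\Phi)}$ when $\kappa/r\ge1$. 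Hence $|H_\kappa|\le \varepsilon$-small constant plus $\varepsilon$-small coefficient times $|\xi|^m$, and the source is also small, with $m\le 1+i(\Phi)$.
\begin{itemize}
\item For $m<1+i(\Phi)$, we choose $\kappa$ comparable to the bracket in \eqref{giji1}. The quantities $\|f\|+\mathcal K$ enter through the power $\frac1{1+i(\Phi)}$, and $\mathcal M$ enters through the power $\frac1{1+i(\Phi)-m}$, since $\mathcal M\kappa^{m-1-i}r^{1+i-m}$ must be small.
\item For $m=1+i(\Phi)$ no power of $\kappa$ helps with $\mathcal M$. Instead we shrink $r$ depending on $\mathcal M,\nu_0$, which explains the dependence in \eqref{giji2}.
\end{itemize}
Thus we may assume $\|u\|_\infty\le1$ and $\|f\|_\infty,\mathcal K,\mathcal M,\,\mathrm{osc}_F\le\varepsilon$.

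\textbf{Step 2: a priori compactness.} We show that normalized solutions are uniformly $C^{\beta}$ (or Lipschitz) in $B_{3/4}$. In the region $\{|Du|\ge1\}$ one has $\Phi\ge \nu_0 L^{-1}|Du|^{i(\Phi)}$ (or the analogous bound via $s(\Phi)$). Since $m\le 1+i(\Phi)$, the term $H/\Phi$ grows at most linearly in $|Du|$ there. So $u$ satisfies Pucci inequalities $\mathcal M^{\pm}(D^2u)\pm C|Du|\gtrless \mp C$ where the gradient is large, and these are vacuous elsewhere. Imbert–Silvestre's argument then gives Hölder estimates. Alternatively, an Ishii–Lions doubling argument gives a Lipschitz bound.

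\textbf{Step 3: approximation lemma.} We show that for every $\delta>0$ there is $\varepsilon$ such that normalized solutions of the equation with a shifted gradient, $\Phi(|q+Dw|,x)F(D^2w,x)+H(q+Dw,x)=f$, are $\delta$-close to an $F_0$-harmonic function $h$, where $F_0(M)=F(M,0)$. The $C^{1,\alpha_0}$ estimates for $h$ are given by Krylov–Safonov/Caffarelli theory. The proof is by contradiction and stability. The main obstacle is the degenerate set where $q+Dw$ vanishes, which must be handled uniformly in $q$, including large $|q|$. When $|q|$ is large, dividing by $\Phi(|q+\cdot|)$ makes the equation uniformly elliptic with small perturbation; here $m\le 1+i(\Phi)$ keeps $H/\Phi$ controlled. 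When $|q|$ is bounded, we use the Imbert–Silvestre cutting lemma: if $\varphi$ touches $w$ with $|D\varphi+q|$ small, a test with the quadratic part yields $F_0(D^2\varphi)\le0$ anyway. We carry out both cases with the Step 2 estimate, which is uniform in $q$.

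\textbf{Step 4: iteration.} From Step 3, we obtain $\rho$ and affine $\ell$ with $|\ell|$ bounded and $\sup_{B_\rho}|u-\ell|\le\rho^{1+\alpha}$ for $\alpha<\alpha_0$. Next we rescale $u_k(x)=(u-\ell_k)(\rho^k x)/\rho^{k(1+\alpha)}$. The rescaled $\Phi$ satisfies \eqref{A3} again. The source gets multiplied by $\rho^{k(1-\alpha)}/\Phi(\rho^{k\alpha})$, which is $\lesssim \rho^{k(1-\alpha-\alpha s(\Phi))}$ when $i\ge0$, and $\lesssim\rho^{k(1-\alpha(1+s-i))}$ in the singular case. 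This is exactly where the restriction \eqref{exponent} is needed. The Hamiltonian with coefficient scaling $\rho^{k(1-\alpha)}(\rho^{k\alpha})^{m}/\Phi(\rho^{k\alpha})$ stays small because $m\le 1+i\le 1+s$. Finally, $\mathrm{osc}_F$ scales by $\rho^{k\theta}$. Summing the corrections $\ell_k$ gives convergence of the affine approximants, hence the $C^{1,\alpha}$ estimate at $x_0$. A standard covering over $\Omega'$ then gives \eqref{giji1}–\eqref{giji2}.
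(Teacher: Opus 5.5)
There is a genuine gap in Steps 2--4 whenever $m>i(\Phi)$, and in the singular range. Your outline otherwise follows the paper's architecture: normalization, doubling-variable compactness, approximation by a harmonic function for the limit operator with the cutting argument at the degenerate set, and geometric iteration.

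\textbf{The gap.} You assert that the compactness bound and the approximation lemma hold uniformly in the shift $q$ under smallness of $\mathcal M$ alone. For large $|q|$ you justify this by saying that $m\le 1+i(\Phi)$ keeps $H/\Phi$ under control after dividing by $\Phi$. It does not. At a touching point with slope $b$ and $|q+b|\ge 1$ you only have $|H(q+b,x)|/\Phi(|q+b|,x)\le L\mathcal K+L\mathcal M|q+b|^{m-i(\Phi)}$, and $\mathcal M|q|^{m-i(\Phi)}$ can be of order one however small $\mathcal M$ is.

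In fact your lemma is false as stated. Take $F(M,x)=\mathrm{tr}\,M$, $\Phi\equiv 1$ (so $i(\Phi)=s(\Phi)=0$), $m=1$, $H(p,x)=\varepsilon\langle e_1,p\rangle$, $\mathcal K=\mathcal M=\varepsilon$, $f=0$ and $q=e_1/\varepsilon$. The shifted equation becomes $\Delta w+\varepsilon\partial_1 w=-1$. Its solution vanishing on $\partial B_1$ is normalized and, by a barrier argument, lies within $O(\varepsilon)$ of $(1-|x|^2)/(2d)$. By the mean value property, that function stays at $L^\infty(B_{1/2})$-distance at least $1/(16d)$ from every harmonic function. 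Step 2 has the same defect: on $\{|q+Dw|\ge 1\}$ the zeroth-order term contains $\mathcal M|q|^{m-i(\Phi)}$, so the H\"older bound is not uniform in $q$.

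\textbf{What is missing.} The smallness parameter must be the coupled quantity $\mathcal M\bigl(|q|^{(m-i(\Phi))_+}+1\bigr)$, both in the compactness estimate and in the approximation lemma. The paper imposes it as hypothesis \eqref{control} in Proposition \ref{jin1} and as the hypothesis of Lemma \ref{bijin}. This quantity must then be shown to survive the rescalings. Since $\xi_k=\rho^{-k\alpha}b_k$ grows, this is not automatic, and checking only that $\mathcal M_k$ is small, as in your Step 4, is not enough. One has
\[
\mathcal M_k|\xi_k|^{m-i(\Phi)}\le L\delta\,\rho^{k(1-\alpha(1+s(\Phi)-i(\Phi)))}\,|b_k|^{m-i(\Phi)},
\]
which stays below $\sigma$ only because $\alpha\le 1/(1+s(\Phi)-i(\Phi))$ and because $|b_k|$ is bounded uniformly in $k$. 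That bound must be extracted from the induction hypothesis \eqref{guina2} before $\delta$ is fixed.

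\textbf{The singular case.} Here the same coupling also hits $f$ and $\mathcal K$, since $1/\Phi(t)\lesssim t^{-i(\Phi)}$ grows for large $t$. In a direct treatment, this coupling is what would force $\alpha\le 1/(1+s(\Phi)-i(\Phi))$. The bare scaling of the source is $\rho^{k(1-\alpha(1+s(\Phi)))}$ for either sign of $i(\Phi)$, so it is not the reason, despite your Step 4 remark. The paper avoids a direct singular analysis altogether:
\begin{itemize}
\item it multiplies the equation by $|Du|^{-i(\Phi)}$ (Proposition \ref{qiyizhuantuihua});
\item it uses the Lipschitz bound of Proposition \ref{jin2}(iii) to keep $|Du|^{-i(\Phi)}f$ bounded;
\item it reruns the degenerate argument with $i=0$, $s=s(\Phi)-i(\Phi)$ and Hamiltonian exponent $m-i(\Phi)\le 1$, where the coupled smallness is again needed.
\end{itemize}
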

\begin{remark}
	The outcome of Theorem \ref{main} is sharp, in the light of the analysis made by a scaling argument in \cite[Section 3]{Andrade}. Note that our Theorem \ref{main} concerns optimal interior regularity, unlike \cite{Birindelli2014ESAIM, B-Demengel2016} which only proves that solutions belong to $C^{1,\alpha}$ for some unspecified exponent $\alpha\in(0,1)$.	In addition, due to the generality of degeneracy/singularity law and Hamiltonian term, our finding above can be regarded as a extension and complement of \cite{Andrade}, which only deals with the case of single power-type degeneracy rate and a special type Hamiltonian term. It is worth emphasizing that this generalization of regularity results here is nontrivial. In fact, due to the simultaneous presence of the abstract forms of $\Phi$ and $H$, implementing the strategy from \cite{Andrade,Baasandorj,Ricarte} becomes a delicate task. In particular, we need to analyze the coupled interplay between the Hamiltonian term and the general degenerate/singular gradient term, and establish new compactness for solutions as well as a new approximation lemma. 
\end{remark}
\begin{remark}
	It is worth pointing out that our result can be applied to some remarkable
	cases besides $\Phi(|Du|,x)=|Du|^{p}+a(x)|Du|^{q}$ or $\Phi(|Du|,x)=|Du|^{p(x)}+a(x)|Du|^{q(x)}$, such as
	\begin{itemize}	
		\item  [{\rm$\bullet$}] $\Phi(|Du|,x)=|Du|^{p}+a(x)|Du|^{p}\log(|Du|+1)$ for $p>-1$ and $0\leq a(\cdot)\in C(\Omega)$;
		\item  [{\rm$\bullet$}] $\Phi(|Du|,x)=\phi(|Du|)+a(x)\varphi(|Du|)$ for suitable $N$-functions (cf. \cite{Adams2003}) $\phi,\varphi$ and $0\leq a(\cdot)\in C(\Omega)$.
	\end{itemize}
Furthermore, condition \eqref{A4} on Hamiltonian term includes some typical examples, such as
\begin{itemize}	
	\item  [{\rm$\bullet$}] $H(t,x)=h(x)|t|^{m}$ for $m>0$ and function $h\in C(\Omega)\cap L^{\infty}(\Omega)$;
	\item  [{\rm$\bullet$}] $H(t,x)=\left\langle h(x),t \right \rangle |t|^{m-1}$ for $m>0$ and the vector field $h\in C(\Omega,\rn)\cap L^{\infty}(\Omega,\rn)$;
	\item  [{\rm$\bullet$}] $H(t,x)=\sum\limits_{i=1}^{N}h_{i}(x)|t|^{m_{i}}$ for $m_{i}>0$, function $h_{i}\in C(\Omega)\cap L^{\infty}(\Omega)$, $i=1,2,\cdots,N$, and $m=\max\left\{m_{1},m_{2},\cdots,m_{N}\right\}.$
\end{itemize}
\end{remark}

Since viscosity solutions to convex/concave equations $F(D^{2}u)=0$ are locally of class $C^{1,1}$ by classical  Evans–Krylov theory \cite{Evans, Krylov1}, an important consequence of Theorem \ref{main} is the following optimal regularity result.
\begin{corollary}\label{coro1}
Assume that the assumptions of Theorem \ref{main} are in force. Suppose further that operator $F$ is convex (or concave). There holds
	\begin{itemize}
	\item [{\rm$({{\rm i}})$}] if $i(\Phi)\geq 0$, then $u\in C_{loc}^{1,\frac{1}{1+s(\Phi)}}(\Omega)$;
	\item [{\rm$({{\rm ii}})$}] if  $-1<i(\Phi)<0$, then $u\in C_{loc}^{1,\frac{1}{1+s(\Phi)-i(\Phi)}}(\Omega)$.
\end{itemize}
\end{corollary}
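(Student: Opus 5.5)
The plan is to derive Corollary \ref{coro1} from Theorem \ref{main} by exploiting the fact that convexity (or concavity) of $F$ lets one take $\alpha_{0}=1$ in \eqref{exponent}. Recall that $\alpha_{0}$ is the H\"older exponent of the gradient for solutions of the homogeneous, constant coefficient problem $F(D^{2}h,x_{0})=0$; it enters the proof of Theorem \ref{main} only through the $C^{1,\alpha_0}$ estimate for the limiting profile produced by the approximation lemma. So the first step is to check the following. For each frozen $x_{0}$, the operator $M\mapsto F(M,x_{0})$ is convex (or concave), uniformly $(\lambda,\Lambda)$-elliptic by \eqref{A1}, and satisfies $F(0,x_0)=0$. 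By the Evans--Krylov theorem \cite{Evans, Krylov1}, viscosity solutions $h$ of $F(D^{2}h,x_{0})=0$ in $B_{1}$ are then $C^{1,1}$ (indeed $C^{2,\bar\alpha}$) in $B_{1/2}$, with
\[
\|h\|_{C^{1,1}(B_{1/2})}\leq C(d,\lambda,\Lambda)\|h\|_{L^{\infty}(B_{1})}.
\]
The constant is uniform in $x_{0}$, since it depends only on the ellipticity constants. Hence the tangential profiles satisfy, for every $r\in(0,1/2)$, an affine approximation bound $\sup_{B_{r}}|h-h(0)-Dh(0)\cdot x|\leq Cr^{2}$. This is stronger than the $r^{1+\alpha}$ bound required at any $\alpha<1$.

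The second step is to observe that the open condition $\alpha<\alpha_{0}$ in \eqref{exponent} only arises from a strict inequality $C_0 r^{1+\alpha_{0}}\leq \tfrac12 r^{1+\alpha}$ in the choice of the geometric radius $r$. When the profile enjoys $C^{1,1}$ bounds we may replace $\alpha_{0}$ by $1$. We then need $\alpha<1$, and the target exponents satisfy $\frac{1}{1+s(\Phi)}<1$ when $i(\Phi)\ge0$ (note $s(\Phi)\geq i(\Phi)\ge 0$; if $s(\Phi)=0$ the exponent is $1$ and we would still use $C^{1,1}$, possibly with a log loss; I would handle this by treating $\alpha$ just below $1$, or by noting that the degenerate/Hamiltonian scaling constraint is the binding one). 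When $-1<i(\Phi)<0$ we have $s(\Phi)-i(\Phi)\geq 0$, so $\frac{1}{1+s(\Phi)-i(\Phi)}\le 1$. Thus the admissible range in \eqref{exponent} becomes $\left(0,\frac{1}{1+s(\Phi)}\right]$, respectively $\left(0,\frac{1}{1+s(\Phi)-i(\Phi)}\right]$. Choosing $\alpha$ equal to the right endpoint and applying Theorem \ref{main}, via \eqref{giji1} or \eqref{giji2} according to the value of $m$, gives $u\in C^{1,\alpha}_{loc}(\Omega)$ with the claimed exponent in each case (i) and (ii).

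The main obstacle is justifying the replacement of $\alpha_0$ by $1$. Formally, the statement of Theorem \ref{main} restricts to $\alpha<\alpha_{0}$, and $\alpha_{0}<1$ for general $F$. The cleanest route is to interpret $\alpha_{0}$ as the regularity exponent of the homogeneous frozen-coefficient problem, which equals $1$ (in the $C^{1,1}$ sense) under convexity. We must then ensure that the endpoint case $\alpha=\frac{1}{1+s(\Phi)}$ is compatible with the strict inequality $\alpha<\alpha_{0}=1$. This holds whenever $s(\Phi)>0$, or $s(\Phi)>i(\Phi)$ in the singular case. In the borderline case where the right endpoint equals $1$, the degeneracy is trivial ($\Phi\sim$ const) and the equation is uniformly elliptic. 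There I would instead invoke the $C^{1,1}$ tangential estimate directly in the iteration, using the quadratic decay $Cr^{2}$ to absorb the constants.
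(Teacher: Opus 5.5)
Your main line is the paper's own. The paper's only justification is the remark that Evans--Krylov makes solutions of the convex (concave) homogeneous problem $C^{1,1}$, so the restriction $\alpha<\alpha_0$ in \eqref{exponent} becomes $\alpha<1$ and one may take the right endpoint. That is valid exactly when the endpoint is $<1$, i.e.\ $s(\Phi)>0$ in (i) and $s(\Phi)>i(\Phi)$ in (ii).

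One point to fix along the way. The tangential profile does not solve $F(D^2h,x_0)=0$; it solves $F_\infty(D^2h)=0$, where $F_\infty$ is a limit of rescaled operators $X\mapsto cF(c^{-1}X,\cdot)$ from Section \ref{section2} and Lemma \ref{diedai2}. Convexity and $F(0,\cdot)=0$ survive these rescalings and locally uniform limits, so Evans--Krylov still gives $C^{1,1}$ bounds depending only on $d,\lambda,\Lambda$. You must, however, run the compactness argument of Lemma \ref{bijin} inside the class of convex operators.

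The genuine gap is the borderline case: $s(\Phi)=0$ in (i), resp.\ $s(\Phi)=i(\Phi)$ in (ii), where the claimed exponent equals $1$.
\begin{itemize}
\item Taking $\alpha$ just below $1$ only yields $C^{1,\alpha}_{loc}$ for every $\alpha<1$, not $C^{1,1}_{loc}$.
\item The quadratic decay of the profile cannot absorb the constant. In Lemma \ref{diedai1} with $\alpha=1$ you need $\varepsilon+C\rho^{2}\le\rho^{2}$, i.e.\ $C\rho^{1-\alpha}=C\le\tfrac12$. No choice of $\rho$ achieves this, since $C=C(d,\lambda,\Lambda)$ is fixed: the profile's decay matches the target rate instead of beating it.
\end{itemize}

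No repair is possible, because the statement is false there. For (i), take $d=2$, $F(M,x)=\mathrm{tr}\,M$ (convex and concave), $\Phi\equiv1$ (so $i(\Phi)=s(\Phi)=0$), $H\equiv0$, $\Omega=B_{1/2}$ and $u(x)=x_1x_2\sqrt{-\log|x|}$.
\begin{itemize}
\item In polar coordinates, $\Delta u=-\sin(2\vartheta)\left((-\log r)^{-1/2}+\tfrac18(-\log r)^{-3/2}\right)$. This extends continuously by $0$ to the origin, so \eqref{A1}--\eqref{A5} hold with $f=\Delta u$.
\item Since $u\in W^{2,q}_{loc}$ for all $q<\infty$, it is a viscosity solution.
\item But $\partial_{12}u=\sqrt{-\log|x|}+o(1)$ is unbounded, so $u\notin C^{1,1}_{loc}$.
\end{itemize}
For (ii), take $\Phi(t)=t^{p}$ with $p\in(-1,0)$ and $u(x)=x_1+x_1x_2\sqrt{-\log|x|}$ on a small ball where $|Du|\ge\tfrac12$. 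Then $f=|Du|^{p}\Delta u$ is continuous and bounded, and $u$ fails to be $C^{1,1}$ in the same way.

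Hence the corollary, both in your proposal and in the paper, holds only off the borderline case. There the correct conclusion is $u\in C^{1,\alpha}_{loc}(\Omega)$ for every $\alpha\in(0,1)$.
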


The remainder of this paper is organized as follows. In Section \ref{section2}, we introduce the basic notions and some well-known results, and then explain how to reduce the problem to a smallness regime. In Section \ref{section3}, we obtain the H\"{o}lder and Lipschitz regularity results, which aim at producing compactness of solutions to auxiliary problems appearing further in our arguments. Section \ref{section4} establishes a geometric tangential path, which is one of the main ingredients in the realm of regularity transmission. In the last section, we provide a detailed proof of Theorem \ref{main}.
%%%%%%%%%%%%%%%%%%%%%%%%%%%%%%%%%%%%%%%%%%%%%%%%%%%%%%%%
%%%%%%%%%%%%%%%%%%%%%%%%%%%%%%%%%%%%%%%%%%%%%%%%%%%%%%%%
\section{Preliminaries}\label{section2}
\subsection{Notations and basic concepts}
Throughout this paper, let $B_{r}(x_{0})$ be the open ball with radius $r$ and centred at $x_{0}\in\rn$. If not important, or clear from the context, we will omit indicating the centre by writing $B_{r}:=B_{r}(x_{0})$. In particular, we shall simply denote $B_{1}:=B_{1}(0)$. In what follows, $C$ denotes a constant whose value may vary from line to line, and only the relevant dependencies are specified in parentheses.

We begin with the definition of the Pucci extremal operators.
\begin{definition}[\bf Pucci extremal operators]
	Let $0<\lambda\leq \Lambda$. For any $M\in S^{d}$, the Pucci extremal operators $P_{\lambda,\Lambda}^{\pm}:S^{d}\rightarrow \mathbb{R}$ are defined as follows
	$$P_{\lambda,\Lambda}^{+}(M):=\Lambda\sum\limits_{e_{i}>0}e_{i}+\lambda\sum\limits_{e_{i}<0}e_{i}$$
	and
	$$P_{\lambda,\Lambda}^{-}(M):=\lambda\sum\limits_{e_{i}>0}e_{i}+\Lambda\sum\limits_{e_{i}<0}e_{i},$$
	where $\{e_{i}\}_{i=1}^{d}$ are the eigenvalues of the matrix $M$. 
\end{definition}

With the Pucci extremal operators in hand, the uniformly $(\lambda,\Lambda)$-ellipticity of the operator $F$ can be reformulated as
$$P_{\lambda,\Lambda}^{-}(N)\leq F(M+N,x)-F(M,x)\leq P_{\lambda,\Lambda}^{+}(N)$$
for all $M,N\in S^{d}$.

In the sequel, we shall focus on the following equations
\begin{equation}\label{model22}
	G(D^{2}u,Du,x):=f(x)-\Phi(\abs{Du}, x)F(D^2 u, x)-H(Du,x)=0 \quad  \text{in} \quad \Omega.
\end{equation}
On account of completeness, we now give the notion of viscosity solution for the operator $G$, which was
introduced in \cite[Definition 2.7]{Birindelli2004} and \cite[Definition 2.1]{Birindelli2010JDE}.
\begin{definition}[\bf Viscosity solutions]
	\label{dingyi}
	 A function $u\in C(\Omega)$ is a viscosity supersolution (resp. subsolution) to \eqref{model22}, if for every $x_0 \in \Omega$ either there exists $\eta>0$ such that $u$ is constant in $B_{\eta}(x_{0})$ and $f(x)\geq 0$ (resp. $f(x)\leq 0$) for all $x\in B_{\eta}(x_{0})$, or, for all $\varphi \in C^2\left(\Omega\right)$ such that $ u -\varphi$ attains a local minimum (resp. local maximum) at $x_0$ and $D\varphi(x_{0})\neq 0$, it holds
	$$
	G(D^2\varphi(x_0),D\varphi(x_0),x_{0}) \geq  0 \quad({\rm resp.}\; 	G(D^2\varphi(x_0),D\varphi(x_0),x_{0})  \leq  0).
	$$
	Finally, a function $u$ is said to be a viscosity solution of \eqref{model22} if it is simultaneously a viscosity supersolution and a viscosity subsolution.
\end{definition}

We end this subsection with the following assertion from \cite[Proposition 1.2]{Birindelli2015} or \cite[Proposition 2.1]{Baasandorj}, which will be essential for handling the singular case in subsequent proofs.
\begin{proposition}\label{qiyizhuantuihua}
	Suppose that assumptions \eqref{A1}-\eqref{A5} are in force with $-1<i(\Phi)<0$ and that $u$ is a viscosity solution of \eqref{model} in the sense of Definition \ref{dingyi}. Then, $u$ is a classical viscosity solution of
	\begin{equation}\label{model1111}
		\abs{Du}^{-i(\Phi)}\Phi(\abs{Du}, x)F(D^2 u, x)+\abs{Du}^{-i(\Phi)}H(Du,x) =\abs{Du}^{-i(\Phi)}f(x) \quad  \text{in} \quad \Omega.
	\end{equation}
\end{proposition}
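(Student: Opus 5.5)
The plan is to reduce the claim to test functions with vanishing gradient, since these are the only ones on which the two notions of solution differ, and to handle them by approximation with nonzero-gradient contacts. We write $\widetilde\Phi(t,x):=t^{-i(\Phi)}\Phi(t,x)$ and $\widetilde H(p,x):=\abs{p}^{-i(\Phi)}H(p,x)$. Since $-i(\Phi)>0$ and \eqref{A4} holds, $\abs{\widetilde H(p,x)}\le \abs{p}^{-i(\Phi)}(\mathcal K+\mathcal M\abs{p}^m)\to0$ as $p\to0$, and $\abs{p}^{-i(\Phi)}f\to0$ likewise. By \eqref{A3}(i) with exponent $i(\Phi)$, $0\le \widetilde\Phi(t,x)\le L\nu_1$ for $0<t\le1$. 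So in \eqref{model1111} at $p=0$ only the term $\widetilde\Phi F$ survives, with a bounded nonnegative coefficient. I will therefore interpret the classical viscosity inequality at a zero-gradient contact through the semicontinuous envelopes of $\widetilde\Phi(\abs p,x)F(M,x)$ as $p\to0$. For a subsolution it then suffices to show $F(D^2\varphi(x_0),x_0)\le 0$, and for a supersolution $F(D^2\varphi(x_0),x_0)\ge0$. I treat the subsolution case; the supersolution case is symmetric.

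\emph{Step 1 (nonzero gradient).} Suppose $u-\varphi$ has a local maximum at $x_0$ with $D\varphi(x_0)\neq0$. Definition \ref{dingyi} gives $\Phi(\abs{D\varphi},x_0)F(D^2\varphi,x_0)+H(D\varphi,x_0)\ge f(x_0)$, up to the sign convention for $G$. Multiplying by $\abs{D\varphi(x_0)}^{-i(\Phi)}>0$ gives exactly the inequality for \eqref{model1111}. If instead $u$ is constant near $x_0$, then $D\varphi(x_0)=0$ and $D^2\varphi(x_0)\ge0$, so $F(D^2\varphi(x_0),x_0)\ge 0$; this settles the supersolution side, and that alternative is already built into Definition \ref{dingyi}.

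\emph{Step 2 (zero gradient, approximation).} Let $u-\varphi$ have a local maximum at $x_0$ with $D\varphi(x_0)=0$, and suppose $u$ is not constant near $x_0$. Replace $\varphi$ by $\varphi+\abs{x-x_0}^4$ so that the maximum is strict, which does not change $D^2\varphi(x_0)$. Then take the perturbations $\varphi_q(x)=\varphi(x)+\abs{x-x_0}^4+\langle q,x-x_0\rangle$ with $\abs q\to0$. By strictness, $u-\varphi_q$ has a local maximum at some $x_q\to x_0$. Following the Birindelli--Demengel argument \cite[Proposition 1.2]{Birindelli2015}, one can choose $q$ so that $p_q:=D\varphi_q(x_q)\neq0$. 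If this failed for every small $q$, then $x_q$ would solve $D\varphi(x)+4\abs{x-x_0}^2(x-x_0)=-q$ for all small $q$, and this forces $u$ to be constant (or affine-touching) near $x_0$, contradicting the assumption. I expect this selection to be the main obstacle. I would first try to reproduce the argument from \cite{Birindelli2015} verbatim, adapting it to the $x$-dependence and to the general $\Phi$.

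\emph{Step 3 (passage to the limit).} At $x_q$ we have $\Phi(\abs{p_q},x_q)F(D^2\varphi_q(x_q),x_q)\ge f(x_q)-H(p_q,x_q)\ge -(\|f\|_\infty+\mathcal K+\mathcal M)$ for $\abs{p_q}\le1$. By \eqref{A3}(i)--(ii), $\Phi(t,x)\ge L^{-1}\nu_0t^{i(\Phi)}$ for $t\le1$, so
\[
F(D^2\varphi_q(x_q),x_q)\ge -\frac{L(\|f\|_\infty+\mathcal K+\mathcal M)}{\nu_0}\abs{p_q}^{-i(\Phi)}\longrightarrow0 .
\]
Since $D^2\varphi_q(x_q)\to D^2\varphi(x_0)$ and $F$ is continuous, we get $F(D^2\varphi(x_0),x_0)\ge0$ in the convention where this is the subsolution inequality; the opposite case is symmetric. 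Multiplying by the bounded nonnegative limiting coefficient of $\widetilde\Phi$ and using that $\widetilde H$ and $\abs p^{-i(\Phi)}f$ vanish at $p=0$ yields the viscosity inequality for \eqref{model1111} at $x_0$. Together with Step 1, this shows that $u$ is a classical viscosity solution of \eqref{model1111}.
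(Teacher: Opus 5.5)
The paper does not prove this statement itself; it quotes it from \cite{Birindelli2015} and \cite{Baasandorj}. Your outline has the right shape: multiply through when $D\varphi(x_0)\neq0$, and approximate zero-gradient contacts by nonzero-gradient ones. But the step that carries all the content is missing.

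\textbf{Step 2 is unproved, and the justification offered is false.} You need maximizers $x_q$ of $u-\varphi_q$ with $D\varphi_q(x_q)\neq0$. You only assert that if this fails for every small $q$, then $u$ is ``constant (or affine-touching)''. That is not an argument, and as a statement about $u$ and $\psi=\varphi+\abs{x-x_0}^4$ alone it is false. Take one variable with $\psi''(x_0)=0$. Let $u-\psi$ coincide with its concave envelope (of slope $-\psi'$, with $\psi'$ nondecreasing there), except on gaps $(\alpha_k,\beta_k)$ accumulating at $x_0$. On these gaps the envelope is affine and $\psi'(\alpha_k)=\psi'(\beta_k)$. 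Then every maximizer of $u-\psi-qx$ satisfies $\psi'(x_q)=-q$, while $u$ can be nonconstant in every gap.

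The way out is to use the equation only where it is needed and to build a nonvanishing gradient into the perturbation. Consider the subsolution case (touching from above, $\Phi F+H\ge f$) and suppose $F(M,x_0)<0$ with $M=D^2\varphi(x_0)$. Then $M$ has a nontrivial negative eigenspace $E$. Replace $\varphi$ by the quadratic with Hessian $M+\eta I$: the contact is strict, $F$ stays negative, and the Hessian is still negative definite on $E$. Then subtract $\gamma\abs{P_E(x-x_0)}$, as in Case~2.1 of Lemma~\ref{bijin}. At a maximizer $x_\gamma$ there are two cases:
\begin{itemize}
\item If $P_E(x_\gamma-x_0)\neq0$, the gradient has strictly negative inner product with $P_E(x_\gamma-x_0)$, so it is nonzero.
\item If $P_E(x_\gamma-x_0)=0$, replace $\abs{P_E(x-x_0)}$ by $\langle e,P_E(x-x_0)\rangle$ for one of $\pm e$, where $e\in E$ is a unit vector; one of the two choices gives a nonzero gradient.
\end{itemize}
In both cases the Hessian is $\le M+\eta I$, so $F\le -c<0$ at $x_\gamma$. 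Moreover $u$ is not constant near $x_\gamma$, and $p_\gamma\to0$.

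\textbf{Step 3 uses a wrong bound.} \eqref{A3} does not give $\Phi(t,x)\ge L^{-1}\nu_0t^{i(\Phi)}$ for $t\le1$. It only gives $L^{-1}\nu_0t^{s(\Phi)}\le\Phi(t,x)\le L\nu_1t^{i(\Phi)}$ there. Your intermediate claim $F(D^2\varphi(x_0),x_0)\ge0$ is in fact false in general. Take $\Phi(t)=\min\{t^{q},t^{p}\}$ with $-1<p<0<q$; this is admissible with $i(\Phi)=p$ and $s(\Phi)=q$. Then $u(x)=-c\abs{x_1}^{(q+2)/(q+1)}$ solves $\Phi(\abs{Du})\Delta u=-1$ near $0$ in the sense of Definition~\ref{dingyi} for suitable $c$. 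Yet $-Ax_1^2$ touches $u$ from above at $0$ for every $A>0$.

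The proposition still holds in this example only because $\widetilde\Phi(t,x)=t^{-i(\Phi)}\Phi(t,x)\to0$ as $t\to0$. Incidentally, this is always the case if $\Phi$ is continuous at $t=0$, as \eqref{A3} literally states. So do not divide by $\Phi$. Instead, multiply the inequality at $x_\gamma$ by $\abs{p_\gamma}^{-i(\Phi)}$ to get
$c\,\widetilde\Phi(\abs{p_\gamma},x_\gamma)\le\abs{p_\gamma}^{-i(\Phi)}(\|f\|_{L^\infty}+\mathcal{K}+\mathcal{M})\to0$.
This is exactly the zero-gradient inequality for \eqref{model1111} in your envelope formulation. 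It gives a contradiction whenever $\widetilde\Phi$ stays away from $0$ near $(0,x_0)$.

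\textbf{Sign conventions.} Fix one convention throughout. As written, Step~1 and Step~3 attach the same inequality $F\ge0$ to opposite notions of solution.
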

%%%%%%%%%%%%%%%%%%%%%%%%%%%%%%%%%%%%%%%%%%%%%%%%%%%%%%%%%%%%%%%%%%%%%%%%%%%%%%%%%%%%%%%%%%%%%%%%%%%%%%%%%%%%%%
\subsection{Smallness regime}
In this subsection, we will apply the scaling features of \eqref{model} to trace the problem back to a smallness regime. That is, without loss of generality, we explicitly verify that it is possible to suppose that
\begin{equation}\label{small}
	\|{u}\|_{L^{\infty}\left(B_{1}\right)}\leq 1 \quad {\rm and}\quad \max\left\{\|{\rm osc}_{{F}}\|_{L^{\infty}\left(B_{1}\right)},\|{f}\|_{L^{\infty}\left(B_{1}\right)},\mathcal{K},\mathcal{M}\right\}\leq \delta
\end{equation}
for some constant $0<\delta<1$, and also that $\nu_{0}=\nu_{1}=1$. In what follows, we examine its scaling properties that allow us to work under assumption $\eqref{small}$. Let $u$ be a viscosity solution to \eqref{model}. For any fixed point $x_{0}\in \Omega^{\prime}\subset\subset\Omega$, we define $\tilde{u}:B_{1}\rightarrow \mathbb{R}$ by
	$$\tilde{u}(x)=\frac{u(rx+x_{0})}{K},$$
	where $K\geq 1\geq r$ are constants to be determined later. Let us first observe that $B_{r}(x_{0})\subset\subset\Omega$, due to the selection of $r$ below. Then we can readily check that
	$\tilde{u}$ solves
	\begin{equation} \label{switchequation}
			\tilde{\Phi}(\Abs{D\tilde{u}}, x)	\tilde{F}(D^2 \tilde{u}, x)+\tilde{H}(D\tilde{u},x) = \tilde{f}(x) \quad \text{in} \quad  B_{1}
	\end{equation}
in the viscosity sense, where
	\begin{align*}
		\tilde{F}(X,x):=&\frac{r^{2}}{K}F\left(\frac{K}{r^{2}}X,rx+x_{0}\right),\\
		\tilde{\Phi}(t,x):=&\frac{\Phi\left(\frac{K}{r}t,rx+x_{0}\right)}{\Phi\left(\frac{K}{r},rx+x_{0}\right)},\\
		\tilde{H}(t,x):=&\frac{r^{2}}{K\Phi\left(\frac{K}{r},rx+x_{0}\right)}H\left(\frac{K}{r}t,rx+x_{0}\right),\\
		\tilde{f}(x):=&\frac{r^{2}}{K\Phi\left(\frac{K}{r},rx+x_{0}\right)}f(rx+x_{0}).
	\end{align*}
	Note that $\tilde{F}$ is still a uniformly $(\lambda,\Lambda)$-elliptic operator, the map $t\mapsto \frac{\tilde{\Phi}(t,x)}{t^{i(\Phi)}}$ is almost non-decreasing, and the map $t\mapsto \frac{\tilde{\Phi}(t,x)}{t^{s(\Phi)}}$ is almost non-increasing with the same constant
	$L\geq 1$ as in assumption \eqref{A3}; and $\tilde{\Phi}(1,x)=1$ for all $x\in B_{1}$. Moreover, a direct calculation yields that
	\begin{equation*}
		{\rm osc}_{\tilde{F}}(x,0)=\sup\limits_{M\in S^{d}\setminus \{0\}}\frac{\Abs{F\left(\frac{K}{r^{2}}M,rx+x_{0}\right)-F\left(\frac{K}{r^{2}}M,x_{0}\right)}}{\frac{K}{r^{2}}\|M\|}={\rm osc}_{{F}}(rx+x_{0},x_{0}).
	\end{equation*}
This along with \eqref{A2} leads to
	\begin{equation*}
	\|{\rm osc}_{\tilde{F}}\|_{L^{\infty}\left(B_{1}\right)}= \|{\rm osc}_{{F}}(\cdot,x_{0})\|_{L^{\infty}\left(B_{r}(x_{0})\right)}\leq C_{F}r^{\theta}.
\end{equation*}
Combining \eqref{A3} with \eqref{A4} and $\frac{K}{r}\geq 1$, we arrive at
\begin{equation*}
		|\tilde{H}(t,x)|\leq \frac{Lr^{2+i(\Phi)}}{\nu_{0}K^{1+i(\Phi)}}\bigg(\mathcal{K}+\mathcal{M}\left(\frac{K}{r}\right)^{m}|t|^{m}\bigg)=:\tilde{\mathcal{K}}+\tilde{\mathcal{M}}|t|^{m},
\end{equation*}
\begin{equation*}
	\|\tilde{f}\|_{L^{\infty}\left(B_{1}\right)}\leq \frac{Lr^{2+i(\Phi)}}{\nu_{0}K^{1+i(\Phi)}}\|{f}\|_{L^{\infty}\left(\Omega\right)},
\end{equation*}
%It follows from \eqref{A3} and $\frac{K}{r}\geq 1$ that
%	\begin{equation*}
%		\|\tilde{f}\|_{L^{\infty}\left(B_{1}\right)}\leq \frac{Lr^{2+i(\Phi)}}{\nu_{0}K^{1+i(\Phi)}}\|{f}\|_{L^{\infty}\left(\Omega\right)},
%	\end{equation*}
%\begin{equation*}
%	\|\tilde{h}\|_{L^{\infty}\left(B_{1}\right)}\leq \frac{Lr^{2-m+i(\Phi)}}{\nu_{0}K^{1-m+i(\Phi)}}\|{h}\|_{L^{\infty}\left(\Omega\right)}.
%\end{equation*}
Now, for given $\delta\in(0,1)$, which will be sufficiently small but fixed. Then, we select
\begin{equation*}
	K:=\left\{
	\begin{array}{lcl}
		 1+\|u\|_{L^{\infty}\left(\Omega\right)}+\left(\frac{L}{\nu_{0}}\left(\|f\|_{L^{\infty}\left(\Omega\right)}+\mathcal{K}\right)\right)^{\frac{1}{1+i(\Phi)}}+\left(\frac{L\mathcal{M}}{\nu_{0}}\right)^{\frac{1}{1+i(\Phi)-m}} & \text{for} & m<1+i(\Phi), \\
		1+\|u\|_{L^{\infty}\left(\Omega\right)} &\text{for}&  m=1+i(\Phi),
	\end{array}
	\right.
\end{equation*}
and
\begin{equation*}
r:=\left\{
\begin{array}{lcl}
	 \min\left\{1,\frac{1}{4}\text{dist}(\Omega^{\prime},\partial\Omega),\left(\frac{\delta}{C_{F}}\right)^{\frac{1}{\theta}},\delta^{\frac{1}{2+i(\Phi)}},\delta^{\frac{1}{2+i(\Phi)-m}}\right\} & \text{for} & m<1+i(\Phi), \\
	 \min\left\{1,\frac{1}{4}\text{dist}(\Omega^{\prime},\partial\Omega),\left(\frac{\delta}{C_{F}}\right)^{\frac{1}{\theta}},\left(\frac{\delta\nu_{0}}{L\left(\|f\|_{L^{\infty}(\Omega)}+\mathcal{K}\right)}\right)^{\frac{1}{2+i(\Phi)}},\frac{\delta\nu_{0}}{L\mathcal{M}}\right\} &\text{for}&  m=1+i(\Phi).
\end{array}
\right.
\end{equation*}
With such choice, we arrive at
\begin{equation*}
	\|\tilde{u}\|_{L^{\infty}\left(B_{1}\right)}\leq \frac{\|{u}\|_{L^{\infty}\left(\Omega\right)}}{K}\leq 1\quad {\rm and }\quad
	\max\left\{\|{\rm osc}_{\tilde{F}}\|_{L^{\infty}\left(B_{1}\right)},\|\tilde{f}\|_{L^{\infty}\left(B_{1}\right)},\tilde{\mathcal{K}},\tilde{\mathcal{M}}\right\}
	\leq \delta.
\end{equation*}	
Therefore, $\tilde{u}$ solves an equation possessing the same structure as \eqref{model} and $\tilde{u}$ is in the smallness regime.
\begin{remark}\label{zhu2.5}
Clearly, it is enough to prove Theorem $\ref{main}$ for $\tilde{u}\in C(B_{1})$ of \eqref{switchequation}. In fact, as soon as we show that
$$[\tilde{u}]_{C^{1, \alpha}(B_{1/2})}\leq C,$$
by scaling back to $u$, we get \\
(i) if $m<1+i(\Phi)$, then
$$[u]_{C^{1, \alpha}(B_{r}(x_{0}))}\leq
\frac{CK}{r^{\alpha}} \leq C\left(N,\lambda,\Lambda,L,i(\Phi),m,\theta,\alpha,\text{dist}(\Omega^{\prime},\partial\Omega),C_{F}\right)K;$$
(ii) if $m=1+i(\Phi)$, then
$$[u]_{C^{1, \alpha}(B_{r}(x_{0}))}\leq
\frac{CK}{r^{\alpha}} \leq C\left(N,\lambda,\Lambda,L,i(\Phi),m,\theta,\alpha,\nu_{0},\text{dist}(\Omega^{\prime},\partial\Omega),C_{F},\|f\|_{L^{\infty}(\Omega)},\mathcal{K},\mathcal{M}\right)K.$$
Finally, we can conclude that the estimates \eqref{giji1} and \eqref{giji2} as stated in Theorem \ref{model} directly follow via a standard covering argument.
\end{remark}
%%%%%%%%%%%%%%%%%%%%%%%%%%%%%%%%%%%%%%%%%%%%%%%%%%%%%%%%
%%%%%%%%%%%%%%%%%%%%%%%%%%%%%%%%%%%%%%%%%%%%%%%%%%%%%%%
\section{H\"{o}lder continuity of perturbed equations}\label{section3}
In this section, we obtain local H\"{o}lder continuity estimates of viscosity solutions to
\begin{equation}\label{model111}
	\Phi(\abs{Du+\xi}, x)F(D^2 u, x)+H(Du+\xi,x) =f(x) \quad  \text{in} \quad B_{1},
\end{equation}
where $\xi$ is an arbitrary vector in $\rn$.
Such estimates yield compactness with respect to uniform convergence to a large class of functions related to the equations we propose to study. The proof relies on the celebrated Crandall-Ishii-Lions lemma, we refer the reader to \cite[Theorem 3.2]{Crandle1} or \cite[Proposition II.3]{Lions1}. Hereafter in this paper, we say that $u\in C(B_{1})$ is a normalized viscosity solution if
$\| u\|_{L^{\infty}(B_{1})} \leq 1$.

To begin with, we deal with the scenario $0\leq i(\Phi)<m\leq 1+i(\Phi)$. In this case, we need to impose the additional assumption to restrain the growth of the term $\mathcal{M}\abs{Du+\xi}^{m-i(\Phi)}$.
\begin{proposition}\label{jin1}
	Assume that the assumptions \eqref{A1}-\eqref{A5} hold with $\nu_{0}=\nu_{1}=1$ and $0\leq i(\Phi)<m\leq 1+i(\Phi)$.
	Let $\xi\in \rn$ and $u\in C(B_{1})$ be a normalized viscosity solution of \eqref{model111}. %with $\|f\|_{L^{\infty}(B_{1})}\leq \delta<1$.
	There exists a universal constant $\kappa_{0}>0$ such that if
	\begin{equation}\label{control}
		\mathcal{M}\left(\abs{\xi}^{m-i(\Phi)}+1\right)\leq \kappa
	\end{equation}
	for some $\kappa\leq \kappa_{0}$, then $u\in C_{loc}^{0,\gamma}(B_{1})$ for some $\gamma\in(0,1)$. Furthermore, there exists a constant $C>0$ depending only on $d,\lambda,\Lambda,L,m,i(\Phi),C_{F},\theta,\kappa_{0}$, $\gamma$, $\|f\|_{L^{\infty}(B_{1})}$ and $\mathcal{K}$, such that
	\begin{equation}
		[u]_{C^{0,\gamma}(B_{1/2})}\leq C.
	\end{equation}
\end{proposition}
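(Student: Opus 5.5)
We prove the estimate with the Ishii--Lions doubling-of-variables method, using the Crandall--Ishii--Lions lemma. Fix $x_0\in B_{1/2}$ and consider
$$\Psi(x,y)=u(x)-u(y)-L_1\omega(|x-y|)-\frac{L_2}{2}|x-x_0|^2-\frac{L_2}{2}|y-x_0|^2,\qquad \omega(t)=t^{\gamma},$$
on $\overline{B_{3/4}}\times\overline{B_{3/4}}$. The goal is to show that, for suitable $L_2$ and $L_1$ large (depending only on the listed quantities), $\max\Psi\le 0$. This gives $|u(x)-u(x_0)|\le L_1|x-x_0|^\gamma$ and hence the $C^{0,\gamma}$ bound on $B_{1/2}$.

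Suppose instead that $\max\Psi>0$ at some point $(\bar x,\bar y)$. First choose $L_2$ of order $32$, so that, since $\|u\|_\infty\le1$, $\bar x$ and $\bar y$ lie in the interior of $B_{3/4}$ and $\bar x\neq\bar y$. Moreover $L_1|\bar x-\bar y|^\gamma\le 2$, so $a:=|\bar x-\bar y|$ is small once $L_1$ is large. The Crandall--Ishii--Lions lemma then gives $(q_x,X)\in\overline{J}^{2,+}u(\bar x)$ and $(q_y,Y)\in\overline{J}^{2,-}u(\bar y)$ with
$$q_x=L_1\gamma a^{\gamma-1}e+L_2(\bar x-x_0),\qquad q_y=L_1\gamma a^{\gamma-1}e-L_2(\bar y-x_0),\qquad e=\frac{\bar x-\bar y}{a},$$
together with the usual matrix inequality. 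Testing that inequality with vectors of the form $(e,-e)$ yields that $X-Y$ has an eigenvalue $\le -cL_1\gamma(1-\gamma)a^{\gamma-2}$, while $\|X\|,\|Y\|\le CL_1a^{\gamma-2}$ up to lower-order terms. Consequently
$$P^{-}(X-Y)\ \ge\ \ldots,\qquad F(X,\bar x)-F(Y,\bar y)\ \le\ -c_1L_1a^{\gamma-2}+C\,\mathrm{osc}_F(\bar x,\bar y)\|Y\|+CL_2,$$
and the oscillation term is absorbed because $\mathrm{osc}_F(\bar x,\bar y)\le C_Fa^\theta$ is small relative to the leading term.

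Next we use the equation at both points. For $L_1$ large, $|q_x|$ and $|q_y|$ are comparable to $L_1\gamma a^{\gamma-1}\ge L_1\gamma$. The gradients that actually enter $\Phi$ and $H$ are $q_x+\xi$ and $q_y+\xi$, and they are comparable to each other because $q_x-q_y$ is bounded by $2L_2$. We split into two cases.

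\emph{Case 1:} $|\xi|\ge A\,L_1a^{\gamma-1}$ with $A$ large. Then $|q+\xi|\approx|\xi|$ at both points.

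\emph{Case 2:} $|\xi|< A\,L_1a^{\gamma-1}$. Then $|q+\xi|\le C L_1a^{\gamma-1}$.

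When $|q+\xi|$ is not small, we divide the equation by $\Phi(|q+\xi|,\cdot)$. Using (A3) with $\Phi(1,\cdot)=1$ and $i(\Phi)\ge0$, we have $\Phi(t,\cdot)\ge L^{-1}t^{i(\Phi)}$ for $t\ge1$, and therefore
$$|F(X,\bar x)|\le L\big(\|f\|_\infty+\mathcal K\big)+L\mathcal M\,|q_x+\xi|^{m-i(\Phi)}.$$
The same bound holds at $\bar y$. Since $0<m-i(\Phi)\le1$, we can estimate $|q+\xi|^{m-i(\Phi)}\le C\big(|\xi|^{m-i(\Phi)}+|q|\big)$, so that, using \eqref{control},
$$\mathcal M|q+\xi|^{m-i(\Phi)}\le C\kappa\,(1+L_1a^{\gamma-1}).$$
Combining with the upper bound for $F(X,\bar x)-F(Y,\bar y)$ gives
$$c_1L_1a^{\gamma-2}\le C\big(1+\|f\|_\infty+\mathcal K\big)+C\kappa L_1a^{\gamma-1}+CL_2.$$
The term $C\kappa L_1a^{\gamma-1}\le C\kappa L_1a^{\gamma-2}$ is absorbed into the left-hand side once $\kappa_0$ is small. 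This fixes $\kappa_0$ universally. The remaining terms are beaten by taking $L_1$ large, which is a contradiction.

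The case where $|q+\xi|<1$ at one of the points still has to be handled. This can only occur in Case 1, with $\xi\approx -q$. There $\Phi$ may degenerate, so dividing by it is not allowed. Note, however, that $|\xi|\approx L_1a^{\gamma-1}$ is large, so $\mathcal M$ is tiny by \eqref{control}. In this situation I would use the standard argument for degenerate equations (as in Imbert--Silvestre): either treat the small-gradient regime via the alternative $|q+\xi|\le1$, which forces $|\xi|\ge L_1\gamma/2$, or modify the test function $\omega$ by adding a transversal perturbation so that the gradients cannot cancel $\xi$ at both points at once.

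The main obstacle is exactly this interplay between $\xi$ and the degeneracy of $\Phi$ and $H$. One must ensure that $\mathcal M|q+\xi|^{m-i(\Phi)}$ stays dominated by the concavity gain $L_1a^{\gamma-2}$ uniformly in $\xi$, which is where \eqref{control} enters. I would first try to handle it as above: keep $\Phi$ in the equation when $|q+\xi|<1$, and use that $\Phi\le L$ and $|H|\le\mathcal K+\mathcal M$ are then bounded. Recovering information on $F(X,\cdot)$ in that regime may nevertheless require the cone/ABP-type Hölder argument (a Krylov--Safonov estimate for the regime of large $|Du+\xi|$) if the doubling argument does not close.
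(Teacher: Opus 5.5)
Your nondegenerate computation is correct and is essentially Steps~2--3 of the paper's proof. It is even uniform in $\xi$, since $\mathcal{M}\abs{q+\xi}^{m-i(\Phi)}\le C\kappa(1+L_1a^{\gamma-1})$ is absorbed by $c_1L_1a^{\gamma-2}$.

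\textbf{The gap.} The argument does not close, because nothing excludes $\abs{q_x+\xi}<1$ or $\abs{q_y+\xi}<1$ at the maximum point. In that regime the viscosity inequality gives no control of $F(X,\bar x)$ or $F(Y,\bar y)$. The reason is that $\Phi(t,\cdot)$ may be as small as $L^{-1}t^{s(\Phi)}$, and if $q_x+\xi=0$ there is no inequality at all. None of your suggested remedies supplies the missing information: keeping $\Phi$ in the equation, using the smallness of $\mathcal{M}$ there, or an unspecified ABP/cone argument. So this is a genuine gap, not a technicality.

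Your bookkeeping is also reversed. For $A$ large, Case~1 gives $\abs{q+\xi}\ge(A-2\gamma)L_1a^{\gamma-1}$. Cancellation forces $\abs{\xi}<2\gamma L_1a^{\gamma-1}+1$, which is Case~2. The root problem is that you split according to $L_1a^{\gamma-1}$. That quantity is known only after the maximum point is found, so no choice of constants can rule out the cancellation.

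\textbf{The paper's fix.} The missing idea is to fix the constants beforehand so that $\abs{q+\xi}\ge 1$ is automatic. The paper uses $m>i(\Phi)$: the hypothesis $\mathcal{M}(\abs{\xi}^{m-i(\Phi)}+1)\le\kappa_0$ yields $\abs{\xi}\le M_0:=(\kappa_0/\mathcal{M})^{1/(m-i(\Phi))}$ with $M_0\ge 1$. Choosing $L_1>4M_0/\gamma$ then gives $\frac{\gamma L_1}{4}a^{\gamma-1}\le\abs{q_x+\xi},\abs{q_y+\xi}\le 3\gamma L_1a^{\gamma-1}$, and the lower bound exceeds $1$. The Hamiltonian term satisfies $L\mathcal{M}(3\gamma L_1a^{\gamma-1})^{m-i(\Phi)}\le 3\gamma L\mathcal{M}\,a\cdot L_1a^{\gamma-2}$. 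It is absorbed by taking $L_2$ large, so that $a\le 2/\sqrt{L_2}$.

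This makes $L_1$, hence the H\"older constant, depend on $\mathcal{M}$ through $M_0$. It blows up as $\mathcal{M}\to 0$, and $\mathcal{M}$ is not among the listed dependencies.

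\textbf{A uniform alternative.} If you want an estimate genuinely uniform in $\xi$, as the compactness in Lemma~\ref{bijin} requires, use the dichotomy of Proposition~\ref{jin2}. Let $L_1'$ be the constant of the Lipschitz-modulus argument with $\omega(s)=s-\omega_0 s^{1+\beta}$, and set $A_0:=3L_1'$.

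If $\abs{\xi}\ge A_0$, then $\abs{q}<2L_1'$ gives $L_1'\le\abs{q+\xi}\le\frac53\abs{\xi}$. Hence $\mathcal{M}\abs{q+\xi}^{m-i(\Phi)}\le 2\kappa$, and the Lipschitz argument closes.

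If $\abs{\xi}<A_0$, run your $t^\gamma$ argument with $L_1\ge 4A_0/\gamma$. Then $\abs{q+\xi}\ge\frac{\gamma}{4}L_1a^{\gamma-1}\ge 1$ at both points, and your estimate applies verbatim.
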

\begin{proof}
	Let $M_{0}:=\left(\frac{\kappa_{0}}{\mathcal{M}}\right)^{\frac{1}{m-i(\Phi)}}$. It follows from $m>i(\Phi)$ and \eqref{control} that
\begin{equation}\label{bound}
	M_{0}\geq 1\quad {\rm and} \quad \abs{\xi}\leq M_{0}.
\end{equation}
	Let $0<r<1$ be fixed. We consider the quantity
	\begin{equation*}
		 G(x_{0}):=\sup\limits_{(x,y)\in B_{r}\times B_{r}}\left\{u(x)-u(y)-L_{1}\lvert x-y \rvert^{\gamma}-L_{2}\Big(\lvert x-x_{0}\rvert^{2}+\lvert y-x_{0} \rvert^{2} \Big)\right\}
	\end{equation*}
defined for each $x_{0} \in B_{r/2}$, where $\gamma\in(0,1)$ and $L_{1},L_{2}>1$. If we prove that there exist two constants $L_{1}, L_{2} >1$ such that
\begin{equation}\label{goal}
	G(x_{0})\leq 0
\end{equation}
for all $x_{0} \in B_{r/2}$, then the result is established. As it is usual when resorting to this class of arguments, we reason through a contradiction argument. That is to say, suppose that for all $L_{1},L_{2}>1$, there is $x_{0}\in B_{r/2}$ for which $G(x_{0})>0$. For ease of the presentation, we introduce two auxiliary functions $\psi,\Psi:\overline{B}_{r}\times\overline{B}_{r}\rightarrow \mathbb{R}$, to be defined as
\begin{equation*}
		\begin{cases}
				\psi(x,y):=L_{1}\lvert x-y \rvert^{\gamma}+L_{2}\Big(\lvert x-x_{0}\rvert^{2}+\lvert y-x_{0} \rvert^{2} \Big),\\
					\Psi\left(x,y\right):=u(x)-u(y)-\psi(x,y).
		\end{cases}
	\end{equation*}
We denote by $\left(\hat{x},\hat{y}\right)$ a maximum point of $\Psi(x,y)$ in $\overline{B}_{r}\times\overline{B}_{r}$, namely,
	\begin{equation*}
		\Psi\left(\hat{x},\hat{y}\right)= G(x_{0})>0.
	\end{equation*}
This together with $\|u\|_{L^{\infty}(B_{1})}\leq 1$ yields
\begin{equation}\label{xiao}
	L_{1}\lvert \hat{x}-\hat{y} \rvert^{\gamma}+L_{2}\Big(\lvert \hat{x}-x_{0}\rvert^{2}+\lvert \hat{y}-x_{0} \rvert^{2} \Big)<u(\hat{x})-u(\hat{y})\leq 2\|u\|_{L^{\infty}(B_{1})}\leq 2.
\end{equation}
It follows from \eqref{xiao} that
\begin{equation}\label{youyong111}
	\abs{\hat{x}-\hat{y}}\leq \sqrt{2\left(\lvert \hat{x}-x_{0}\rvert^{2}+\lvert \hat{y}-x_{0} \rvert^{2} \right)}\leq \frac{2}{\sqrt{L_{2}}}.
\end{equation}	
	Before we proceed, choosing $L_{2}\geq \frac{32}{r^{2}}$. With this choice, we deduce that
\begin{equation}\label{neidian11}
	\lvert \hat{x}-x_{0}\rvert,\lvert \hat{y}-x_{0} \rvert\leq \frac{r}{4},\quad \abs{\hat{x}-\hat{y}}\leq \lvert \hat{x}-x_{0}\rvert+\lvert \hat{y}-x_{0} \rvert\leq \frac{r}{2}<1.
\end{equation}
A combination of \eqref{neidian11} and $x_{0}\in B_{r/2}$ yields that $\left(\hat{x},\hat{y}\right)\in B_{r}\times B_{r}$. In addition, notice that $\hat{x}\neq\hat{y}$; otherwise $G(x_{0})=\Psi\left(\hat{x},\hat{y}\right)=-2L_{2}\abs{\hat{x}-x_{0}}^{2}\leq 0$ and \eqref{goal} would be immediately verified.
%By virtue of \eqref{neidian11} and the triangle inequality, we obtain
%	$$\abs{\hat{x}-\hat{y}}\leq \lvert \hat{x}-x_{0}\rvert+\lvert \hat{y}-x_{0} \rvert\leq \frac{r}{2}<1.$$
%Moreover, it follows from \eqref{xiao} that
%\begin{equation}\label{youyong111}
%	\abs{\hat{x}-\hat{y}}\leq \sqrt{2\left(\lvert \hat{x}-x_{0}\rvert^{2}+\lvert \hat{y}-x_{0} \rvert^{2} \right)}\leq \frac{2}{\sqrt{L_{2}}}.
%\end{equation}	

	 We are in a position to apply the Crandall-Ishii-Lions lemma (see \cite[Theorem 3.2]{Crandle1}, \cite[Proposition 2.1]{Fili}) to assure
	the existence of a limiting subjet $\left(\xi_{\hat{x}},X\right)$ of $u$ at $\hat{x}$ and a limiting superjet $\left(\xi_{\hat{y}},Y\right)$ of $u$ at $\hat{y}$, such that the matrices $X,Y\in S^{d}$ satisfy the matrix inequality
	\begin{equation}\label{matrix}
		\left(
		\begin{array}{cc}
			X & 0 \\
			0 & -Y \\
		\end{array}
		\right)
		\leq  \left(
		\begin{array}{cc}
			A & -A \\
			-A & A \\
		\end{array}
		\right)+
		(2L_{2}+\epsilon)
		\left(
		\begin{array}{cc}
			I & 0 \\
			0 & I \\
		\end{array}
		\right)
	\end{equation}
	with $\epsilon\in(0,1)$, that only depends on the norm of $A$ and can be made sufficiently small. Here,
	\begin{equation*}
			\xi_{\hat{x}}:=\gamma L_{1}(\hat{x}-\hat{y})\lvert \hat{x}-\hat{y}\rvert^{\gamma -2}+2L_{2}(\hat{x}-x_{0}),  \quad \xi_{\hat{y}}:=\gamma L_{1}(\hat{x}-\hat{y})\lvert \hat{x}-\hat{y}\rvert^{\gamma -2}-2L_{2}(\hat{y}-x_{0}),
	\end{equation*}
\begin{equation}\label{Adingyi}
	A:=L_{1}\gamma\left[(\gamma-2)\abs{\hat{x}-\hat{y}}^{\gamma-4}\left((\hat{x}-\hat{y})\otimes(\hat{x}-\hat{y})\right)+\abs{\hat{x}-\hat{y}}^{\gamma-2}I\right].
\end{equation}
Furthermore, we have the following viscosity inequalities
\begin{equation}\label{jie1}
\Phi(\abs{\xi_{\hat{x}}+\xi},\hat{x})F(X, \hat{x})+H(\xi_{\hat{x}}+\xi,\hat{x})\geq f(\hat{x}),
\end{equation}
\begin{equation}\label{jie2}
	\Phi(\abs{\xi_{\hat{y}}+\xi},\hat{y})F(Y, \hat{y})+H(\xi_{\hat{y}}+\xi,\hat{y})\leq f(\hat{y}).
\end{equation}
In what follows, for ease of clarity, we split the  proof into three steps.\\
{\bf Step 1.} We first claim that
\begin{equation}\label{shangxiajie}
	\frac{\gamma L_{1}}{2}\Abs{\hat{x}-\hat{y}}^{\gamma-1}\leq\abs{\xi_{\hat{x}}},\abs{\xi_{\hat{y}}}\leq 2\gamma L_{1}\Abs{\hat{x}-\hat{y}}^{\gamma-1}.
\end{equation}	
In fact, choose $L_{1}>\frac{L_{2}r^{2-\gamma}}{\gamma2^{1-\gamma}}$, it follows from \eqref{neidian11} %$\Abs{\hat{x}-\hat{y}}\leq\frac{r}{2}$, 
and $\gamma<1$ that
$$2L_{2}\Abs{\hat{x}-x_{0}}\leq 2L_{2}\frac{r}{4}< \frac{\gamma L_{1}}{2}\left(\frac{r}{2}\right)^{\gamma-1}\leq \frac{\gamma L_{1}}{2}\Abs{\hat{x}-\hat{y}}^{\gamma-1}.
$$	
This along with the triangle inequality leads to that
\begin{equation*}
	\abs{\xi_{\hat{x}}}\geq \gamma L_{1}\lvert \hat{x}-\hat{y}\rvert^{\gamma -1}-2L_{2}\Abs{\hat{x}-x_{0}}
	\geq \gamma L_{1}\lvert \hat{x}-\hat{y}\rvert^{\gamma -1}-\frac{\gamma L_{1}}{2}\Abs{\hat{x}-\hat{y}}^{\gamma-1}= \frac{\gamma L_{1}}{2}\Abs{\hat{x}-\hat{y}}^{\gamma-1},
\end{equation*}
\begin{equation*}
	\abs{\xi_{\hat{x}}}\leq \gamma L_{1}\lvert \hat{x}-\hat{y}\rvert^{\gamma -1}+2L_{2}\Abs{\hat{x}-x_{0}}
	\leq \gamma L_{1}\lvert \hat{x}-\hat{y}\rvert^{\gamma -1}+\frac{\gamma L_{1}}{2}\Abs{\hat{x}-\hat{y}}^{\gamma-1}\leq 2\gamma L_{1}\Abs{\hat{x}-\hat{y}}^{\gamma-1}.
\end{equation*}
In exactly the same way, we derive
\begin{equation*}
	\frac{\gamma L_{1}}{2}\Abs{\hat{x}-\hat{y}}^{\gamma-1}\leq \abs{\xi_{\hat{y}}}\leq 2\gamma L_{1}\Abs{\hat{x}-\hat{y}}^{\gamma-1}.
\end{equation*}	
We proceed to estimate the bounds of $\abs{\xi_{\hat{x}}+\xi}$ and $\abs{\xi_{\hat{y}}+\xi}$.
Selecting $L_{1}>\frac{4M_{0}}{\gamma}$, then a combination of \eqref{bound} with \eqref{shangxiajie} and $\Abs{\hat{x}-\hat{y}}^{\gamma-1}>1$ yields that
\begin{align}
	& \begin{aligned}
		|\xi_{\hat{x}} + \xi| &\leq |\xi_{\hat{x}}| + |\xi| \leq 2\gamma L_1 |\hat{x}-\hat{y}|^{\gamma-1} + M_0 \leq 2\gamma L_1 |\hat{x}-\hat{y}|^{\gamma-1} + \frac{\gamma L_1}{4} \\
		&\leq 2\gamma L_1 |\hat{x}-\hat{y}|^{\gamma-1} + \gamma L_1 |\hat{x}-\hat{y}|^{\gamma-1} = 3\gamma L_1 |\hat{x}-\hat{y}|^{\gamma-1},
	\end{aligned} \label{shangbound} \\
	& \begin{aligned}
		|\xi_{\hat{x}} + \xi| &\geq |\xi_{\hat{x}}| - |\xi| \geq \frac{\gamma L_1}{2} |\hat{x}-\hat{y}|^{\gamma-1} - M_0 \geq \frac{\gamma L_1}{2} |\hat{x}-\hat{y}|^{\gamma-1} - \frac{\gamma L_1}{4} |\hat{x}-\hat{y}|^{\gamma-1} \\
		&= \frac{\gamma L_1}{4} |\hat{x}-\hat{y}|^{\gamma-1} > M_0 \, |\hat{x}-\hat{y}|^{\gamma-1} > 1.
	\end{aligned} \label{xiabound}
\end{align}
By the same way, we can deduce
\begin{equation}\label{ayj}
1<M_{0}\Abs{\hat{x}-\hat{y}}^{\gamma-1}\leq \abs{\xi_{\hat{y}}+\xi} \leq 3\gamma L_{1}\Abs{\hat{x}-\hat{y}}^{\gamma-1}.
\end{equation}
As a consequence, in light of condition \eqref{A3}, \eqref{xiabound} and \eqref{ayj}, we can rewrite \eqref{jie1} and \eqref{jie2} as
\begin{equation*}
	F(X, \hat{x})+\frac{H(\xi_{\hat{x}}+\xi,\hat{x})}{\Phi(\abs{\xi_{\hat{x}}+\xi},\hat{x})}\geq \frac{f(\hat{x})}{\Phi(\abs{\xi_{\hat{x}}+\xi},\hat{x})},
\end{equation*}
\begin{equation*}
	F(Y, \hat{y})+\frac{H(\xi_{\hat{y}}+\xi,\hat{y})}{\Phi(\abs{\xi_{\hat{y}}+\xi},\hat{y})}\leq \frac{f(\hat{y})}{\Phi(\abs{\xi_{\hat{y}}+\xi},\hat{y})}.
\end{equation*}
Combining the uniform ellipticity of operator $F$ at the point $\hat{x}$ with \eqref{A2}, we deduce
\begin{equation*}
	\begin{split}
		F(X,\hat{x})\leq& P_{\lambda,\Lambda}^{+}(X-Y)+F(Y,\hat{y})+F(Y,\hat{x})-F(Y,\hat{y})\\
		\leq& P_{\lambda,\Lambda}^{+}(X-Y)+F(Y,\hat{y})+C_{F}\|Y\|\abs{\hat{x}-\hat{y}}^{\theta}.
	\end{split}
\end{equation*}
Combining the last three displays, we arrive at
\begin{equation}\label{ainequality}
	\begin{split}
		&\frac{f(\hat{x})}{\Phi(\abs{\xi_{\hat{x}}+\xi},\hat{x})}
		-\frac{H(\xi_{\hat{x}}+\xi,\hat{x})}{\Phi(\abs{\xi_{\hat{x}}+\xi},\hat{x})}\\
		\leq& P_{\lambda,\Lambda}^{+}(X-Y)+\frac{f(\hat{y})}{\Phi(\abs{\xi_{\hat{y}}+\xi},\hat{y})}-\frac{H(\xi_{\hat{y}}+\xi,\hat{y})}{\Phi(\abs{\xi_{\hat{y}}+\xi},\hat{y})} +C_{F}\|Y\|\abs{\hat{x}-\hat{y}}^{\theta}.
	\end{split}
\end{equation}
{\bf Step 2.} We now estimate $P_{\lambda,\Lambda}^{+}(X-Y)$ and $\|Y\|$. For vectors of the form $(z,z)\in\mathbb{R}^{2d}$ with $\abs{z}=1$, we apply the matrix inequality \eqref{matrix} to obtain
	\begin{equation*}
		\left\langle(X-Y)z,z\right\rangle\leq \left(4L_{2}+2\epsilon\right)|z|^{2}.
	\end{equation*}
	This means that all the eigenvalues of $X-Y$ are less than or equal to $4L_{2}+2\epsilon$. In particular, applying \eqref{matrix} to the vector $\left(\frac{\hat{x}-\hat{y}}{\abs{\hat{x}-\hat{y}}},\frac{\hat{y}-\hat{x}}{\abs{\hat{x}-\hat{y}}}\right)\in\mathbb{R}^{2d}$,
	we get
	\begin{equation*}
		\begin{split}
			\left\langle(X-Y)\frac{\hat{x}-\hat{y}}{\abs{\hat{x}-\hat{y}}},\frac{\hat{x}-\hat{y}}{\abs{\hat{x}-\hat{y}}}\right\rangle&\leq 4\left\langle A\frac{\hat{x}-\hat{y}}{\abs{\hat{x}-\hat{y}}},\frac{\hat{x}-\hat{y}}{\abs{\hat{x}-\hat{y}}}\right\rangle+(4L_{2}+2\epsilon)\Abs{\frac{\hat{x}-\hat{y}}{\abs{\hat{x}-\hat{y}}}}^{2}\\
			 &=\left(4L_{1}\gamma(\gamma-1)\abs{\hat{x}-\hat{y}}^{\gamma-2}+4L_{2}+2\epsilon\right)\Abs{\frac{\hat{x}-\hat{y}}{\abs{\hat{x}-\hat{y}}}}^{2}.
		\end{split}
	\end{equation*}	
	This yields that
	at least one eigenvalue of $X-Y$ is less than $4L_{1}\gamma(\gamma-1)\abs{\hat{x}-\hat{y}}^{\gamma-2}+4L_{2}+2\epsilon$. We choose $L_{1}>\frac{4L_{2}+2}{4\gamma(1-\gamma)}$, then it leads to
	$$4L_{1}\gamma(\gamma-1)\abs{\hat{x}-\hat{y}}^{\gamma-2}+4L_{2}+2\epsilon<4L_{1}\gamma(\gamma-1)+4L_{2}+2<0.$$
	This means that at least one eigenvalue of
	$X-Y$ is negative. By the definition of Pucci extremal operator, we deduce
	\begin{equation}\label{suanzijie}
		P_{\lambda,\Lambda}^{+}(X-Y)\leq \Lambda(d-1)(4L_{2}+2\epsilon)+\lambda\left(4L_{2}+2\epsilon-4\gamma(1-\gamma)L_{1}\abs{\hat{x}-\hat{y}}^{\gamma-2}\right).
	\end{equation}
	Further, applying \eqref{matrix} to the vectors $(0,z)\in\mathbb{R}^{2d}$ with $\abs{z}=1$,
	we obtain
	\begin{equation}\label{Yfanshu}
		\left\langle-Yz,z\right\rangle\leq \left\langle Az,z\right\rangle+(2L_{2}+\epsilon)|z|^{2}.
	\end{equation}
It follows from the definition of the matrix $A$ in \eqref{Adingyi} that $A- L_{1}\gamma\abs{\hat{x}-\hat{y}}^{\gamma-2}I$ is non-positive definite matrix. This together with \eqref{Yfanshu} yields that
	\begin{equation}\label{fanshujie}
		\|Y\|\leq L_{1}\gamma\abs{\hat{x}-\hat{y}}^{\gamma-2}+2L_{2}+\epsilon.
	\end{equation}
Combining \eqref{fanshujie} with $\abs{\hat{x}-\hat{y}}^{\theta}<1$, we have
\begin{equation}\label{afanshujie}
	C_{F}\|Y\|\Abs{\hat{x}-\hat{y}}^{\theta}\leq C_{F}L_{1}\gamma\Abs{\hat{x}-\hat{y}}^{\theta+\gamma-2}+C_{F}\left(2L_{2}+\epsilon\right).
\end{equation}
{\bf Step 3.} We proceed to estimate the remaining four terms in \eqref{ainequality}. A combination of \eqref{shangbound}-\eqref{ayj} with the assumptions \eqref{A3}, \eqref{A4} and $m>i(\Phi)\geq 0$ yields that
\begin{equation*}
	\frac{f(\hat{y})}{\Phi\left(\abs{\xi_{\hat{y}}+\xi},\hat{y}\right)}\leq  \frac{L\|f\|_{L^{\infty}(B_{1})}}{\abs{\xi_{\hat{y}}+\xi}^{i(\Phi)}}\leq L\|f\|_{L^{\infty}(B_{1})},
\end{equation*}
\begin{equation*}
	\frac{f(\hat{x})}{\Phi\left(\abs{\xi_{\hat{x}}+\xi},\hat{x}\right)}\geq  \frac{-L\|f\|_{L^{\infty}(B_{1})}}{\abs{\xi_{\hat{x}}+\xi}^{i(\Phi)}}\geq -L\|f\|_{L^{\infty}(B_{1})},
\end{equation*}
\begin{equation*}
	\frac{H(\xi_{\hat{x}}+\xi,\hat{x})}{\Phi\left(\abs{\xi_{\hat{x}}+\xi},\hat{x}\right)}\leq \frac{L\left(\mathcal{K}+\mathcal{M}\abs{\xi_{\hat{x}}+\xi}^{m}\right)}{\abs{\xi_{\hat{x}}+\xi}^{i(\Phi)}}\leq L\mathcal{K}+
	L\mathcal{M}\left(3\gamma L_{1}\Abs{\hat{x}-\hat{y}}^{\gamma-1}\right)^{m-i(\Phi)},
\end{equation*}
\begin{equation*}
	\frac{H(\xi_{\hat{y}}+\xi,\hat{y})}{\Phi\left(\abs{\xi_{\hat{y}}+\xi},\hat{y}\right)}\leq \frac{L\left(\mathcal{K}+\mathcal{M}\abs{\xi_{\hat{y}}+\xi}^{m}\right)}{\abs{\xi_{\hat{y}}+\xi}^{i(\Phi)}}\leq
	L\mathcal{K}+
	L\mathcal{M}\left(3\gamma L_{1}\Abs{\hat{x}-\hat{y}}^{\gamma-1}\right)^{m-i(\Phi)}.
\end{equation*}
Substituting the aforementioned four inequalities, \eqref{suanzijie} and \eqref{afanshujie} into \eqref{ainequality}, in combination with $\Abs{\hat{x}-\hat{y}}<1$, $\gamma<1$ and $m\leq 1+i(\Phi)$, we eventually arrive at
\begin{equation*}
	\begin{split}
		&-2L\|f\|_{L^{\infty}(B_{1})}-2L\mathcal{K}\\
		\leq& C_{1}+L_{1}\Abs{\hat{x}-\hat{y}}^{\gamma-2}\left(C_{F}\gamma\Abs{\hat{x}-\hat{y}}^{\theta}-4\lambda\gamma(1-\gamma)+6L\mathcal{M}\Abs{\hat{x}-\hat{y}}^
		{(\gamma-1)\left(m-i(\Phi)-1\right)+1}\right)\\
		\leq& C_{1}+L_{1}\abs{\hat{x}-\hat{y}}^{\gamma-2}\left(C_{F}\gamma\abs{\hat{x}-\hat{y}}^{\theta}-4\lambda\gamma(1-\gamma)+6L\mathcal{M}\Abs{\hat{x}-\hat{y}}\right),
	\end{split}
\end{equation*}
where $C_{1}:=\left(\Lambda(d-1)+\lambda\right)(4L_{2}+2)+C_{F}(2L_{2}+1)$. Choose $$L_{2}\geq \max\left\{4\left(\frac{C_{F}}{\lambda(1-\gamma)}\right)^{2/\theta},\left(\frac{12L\mathcal{M}}{\lambda\gamma(1-\gamma)}\right)^{2}\right\}.$$ With this choice, we apply \eqref{youyong111} to derive
\begin{equation}\label{zuihou}
	C_{F}\Abs{\hat{x}-\hat{y}}^{\theta}\leq C_{F}\left(\frac{2}{\sqrt{L_{2}}}\right)^{\theta}\leq \lambda(1-\gamma),\quad 6L\mathcal{M}\Abs{\hat{x}-\hat{y}}\leq \frac{12L\mathcal{M}}{\sqrt{L_{2}}}\leq \lambda\gamma(1-\gamma).
\end{equation}
Utilizing \eqref{zuihou}, $\Abs{\hat{x}-\hat{y}}<1$ and $\gamma<1$, we further deduce
\begin{equation}\label{key11}
	-2L\left(\|f\|_{L^{\infty}(B_{1})}+\mathcal{K}\right)
	\leq C_{1}-2L_{1}\lambda\gamma(1-\gamma)\abs{\hat{x}-\hat{y}}^{\gamma-2}< C_{1}-2L_{1}\lambda\gamma(1-\gamma).
\end{equation}
Finally, if we choose $L_{1}\geq \frac{C_{1}+2L\left(\|f\|_{L^{\infty}(B_{1})}+\mathcal{K}\right)}{2\lambda\gamma(1-\gamma)}$, we obtain a contradiction with \eqref{key11}.
	
Therefore, we prove the claim \eqref{goal}, which means that $u$ is $\gamma$-H\"{o}lder continuous with the estimate
$$[u]_{C^{0,\gamma}(B_{r/2})}\leq C\left(d,\lambda,\Lambda,\kappa_{0},r,L,m,i(\Phi),C_{F},\theta,\gamma,\|f\|_{L^{\infty}(B_{1})},\mathcal{K}\right).$$
The proof is complete.	
\end{proof}
We are now ready to consider the range $0<m\leq i(\Phi)$ for the degenerate case and the range $0<m\leq 1+ i(\Phi)$ for the singular case.
\begin{proposition}\label{jin2}
	Assume that the assumptions \eqref{A1}-\eqref{A5} hold with $\nu_{0}=\nu_{1}=1$.
	Let $\xi \in \rn$ be an arbitrarily vector and $u\in C(B_{1})$ is a normalized viscosity solution of \eqref{model111}. %with $\|f\|_{L^{\infty}(B_{1})}\leq \delta<1$ and $\|h\|_{L^{\infty}(B_{1})}\leq \delta<1$.
	Then there exists a constant $A_{0}=A_{0}(d,\lambda,\Lambda,L,m,i(\Phi),C_{F},\theta,\|f\|_{L^{\infty}(B_{1})},\mathcal{K},\mathcal{M})>0$ such that
	\begin{itemize}
		\item   [{\rm$({{\rm i}})$}] if $0<m\leq i(\Phi)$ and $\abs{\xi}\geq A_{0}$, then $u\in C_{loc}^{0,1}(B_{1})$. In addition, there exists a constant $C=C(d,\lambda,\Lambda,L,m,i(\Phi),C_{F},\theta,\|f\|_{L^{\infty}(B_{1})},\mathcal{K},\mathcal{M})>0$ such that
		\begin{equation}
			[u]_{C^{0,1}(B_{1/2})}\leq C.
		\end{equation}
		\item [{\rm$({{\rm ii}})$}] If $0<m\leq i(\Phi)$ and $\abs{\xi}< A_{0}$, then $u\in C_{loc}^{0,\gamma}(B_{1})$ for some $\gamma>0$. In addition, there exists a constant $C=C(d,\lambda,\Lambda,L,m,i(\Phi),C_{F},\theta,\gamma,\|f\|_{L^{\infty}(B_{1})},\mathcal{K},\mathcal{M})>0$, such that
		\begin{equation}
			[u]_{C^{0,\gamma}(B_{1/2})}\leq C.
		\end{equation}
		\item [ {\rm$({{\rm iii}})$}] If $-1<i(\Phi)<0$, $0<m\leq 1+i(\Phi)$ and $\xi=0$, then $u\in C_{loc}^{0,1}(B_{1})$. In addition, there exists a constant $C=C(d,\lambda,\Lambda,L,m,i(\Phi),C_{F},\theta,\|f\|_{L^{\infty}(B_{1})},\mathcal{K},\mathcal{M})>0$ such that
		\begin{equation}
			[u]_{C^{0,1}(B_{1/2})}\leq C.
		\end{equation}
	\end{itemize}
\end{proposition}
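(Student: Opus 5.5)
The plan is to reuse the doubling-variables argument from Proposition \ref{jin1}, changing only the modulus of continuity and the way the gradient-dependent terms are absorbed. For (i) and (iii) we take a Lipschitz-type modulus in place of $|x-y|^{\gamma}$. The standard choice is $\omega(t)=t-\frac{1}{2-\beta}t^{2-\beta}$ on $[0,1]$ for some fixed $\beta\in(0,1)$. It has $\omega'\in[\frac12,1]$ and $\omega''(t)=-(1-\beta)t^{-\beta}<0$. We set $\psi(x,y)=L_1\omega(|x-y|)+L_2(|x-x_0|^2+|y-x_0|^2)$ and take $(\hat x,\hat y)$ to be a maximum point of $u(x)-u(y)-\psi(x,y)$. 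Arguing by contradiction with $L_2\ge 32/r^2$, we again get $(\hat x,\hat y)\in B_r\times B_r$, $\hat x\neq\hat y$, and $|\hat x-\hat y|\le 2/\sqrt{L_2}$. The Crandall--Ishii--Lions lemma then gives jets $(\xi_{\hat x},X)$ and $(\xi_{\hat y},Y)$ with $\frac{L_1}{4}\le|\xi_{\hat x}|,|\xi_{\hat y}|\le 2L_1$ once $L_1\gg L_2$. Exactly as in Step 2 of Proposition \ref{jin1}, we obtain
\[
P^+_{\lambda,\Lambda}(X-Y)\le C(L_2+1)-4\lambda(1-\beta)L_1|\hat x-\hat y|^{-\beta}
\quad\text{and}\quad
\|Y\|\le CL_1|\hat x-\hat y|^{-1}+C L_2 .
\]

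For (i), with $0<m\le i(\Phi)$ (so $i(\Phi)\ge m>0$) and $|\xi|\ge A_0$, we split into two cases. In the first case $|\xi_{\hat x}+\xi|,|\xi_{\hat y}+\xi|\ge 1$, so \eqref{A3} gives $\Phi(t,\cdot)\ge L^{-1}t^{i(\Phi)}\ge L^{-1}$. Dividing by $\Phi$, the lower-order terms are then bounded by $L(\|f\|_\infty+\mathcal K)+L\mathcal M|\xi_{\cdot}+\xi|^{m-i(\Phi)}\le L(\|f\|_\infty+\mathcal K+\mathcal M)$, which is a constant independent of $L_1$ because $m\le i(\Phi)$. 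In the second case one of the shifted gradients is small, which forces $|\xi|\le 2L_1+1$. To exclude this case we would like to take $A_0>2L_1+1$. The difficulty is that $L_1$ itself must be chosen large in terms of the data, so we have to order the choices carefully: first fix $L_2$, then $L_1$ depending only on universal quantities and on $\|f\|_\infty,\mathcal K,\mathcal M$, and finally $A_0:=4L_1$. Then $|\xi|\ge A_0$ gives $|\xi_{\cdot}+\xi|\ge |\xi|-2L_1\ge 2L_1\ge1$, so only the first case occurs. Inserting the bounds into the analogue of \eqref{ainequality} yields
\[
-C\le C(L_2+1)+C_F C L_1|\hat x-\hat y|^{\theta-1}-4\lambda(1-\beta)L_1|\hat x-\hat y|^{-\beta}.
\]
We choose $\beta<1-\theta$ so that the $C_F$ term is dominated: using $|\hat x-\hat y|\le 2/\sqrt{L_2}$ with $L_2$ large, we get $|\hat x-\hat y|^{\theta-1+\beta}$ small. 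Then $L_1$ large gives a contradiction. This is the main obstacle: the oscillation term $\|Y\||\hat x-\hat y|^\theta\sim L_1|\hat x-\hat y|^{\theta-1}$ must be beaten by the concavity gain, which is why we use the modulus with $\beta<1-\theta$ rather than a pure linear one.

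For (ii), with $|\xi|<A_0$, we run the proof of Proposition \ref{jin1} verbatim with the $\gamma$-Hölder test function, choosing $L_1>4A_0/\gamma$ so that $|\xi_{\cdot}+\xi|\ge1$. Then $\Phi\ge L^{-1}|\cdot|^{i(\Phi)}\ge L^{-1}$ and $|H|/\Phi\le L(\mathcal K+\mathcal M)$ because $m\le i(\Phi)$. The Hamiltonian terms are therefore bounded constants, and no smallness condition like \eqref{control} is needed.

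For (iii), with $-1<i(\Phi)<0$ and $\xi=0$, Proposition \ref{qiyizhuantuihua} lets us work with \eqref{model1111}, whose equation holds at all test gradients. At the jets we have $|\xi_{\hat x}|\ge L_1/4\ge1$, and \eqref{A3} gives $|t|^{-i(\Phi)}\Phi(t,\cdot)\ge L^{-1}$ for $t\ge1$. Dividing as before, the $f$ term contributes $L\|f\|_\infty|\xi_{\hat x}|^{-i(\Phi)}\cdot|\xi_{\hat x}|^{i(\Phi)}$, which is bounded, and $|H|/\Phi\le L(\mathcal K+\mathcal M|\xi_{\hat x}|^{m-i(\Phi)})\le L\mathcal K+CL\mathcal M L_1^{m-i(\Phi)}$. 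Since $m-i(\Phi)\le1$, this term is at most $CL_1$, whereas the concavity term $\lambda(1-\beta)L_1|\hat x-\hat y|^{-\beta}$ grows like $L_1L_2^{\beta/2}$. The same absorption therefore works after taking $L_2$ large, and then $L_1$ large closes the contradiction, giving the Lipschitz bound on $B_{r/2}$ and hence on $B_{1/2}$ by covering.
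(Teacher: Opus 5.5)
Your strategy is the paper's. You double variables with a concave perturbation of the linear modulus for (i) and (iii). You take $A_0\sim L_1$ after $L_1$ has been fixed from the data. You rerun Proposition \ref{jin1} for (ii). Your $\omega(t)=t-\frac{1}{2-\beta}t^{2-\beta}$ is the paper's $s-\omega_0 s^{1+\beta'}$ with $\beta'=1-\beta$ and $\omega_0=1/(1+\beta')$. However, two steps are wrong as written.

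\textbf{The range of $\beta$ is reversed.} This happens exactly at the point you call the main obstacle. The concavity gain is $4\lambda(1-\beta)L_1\abs{\hat x-\hat y}^{-\beta}$ and the oscillation loss is $C_F L_1\abs{\hat x-\hat y}^{\theta-1}$. Their ratio is $\abs{\hat x-\hat y}^{\theta-1+\beta}$, which is small for small $\abs{\hat x-\hat y}$ only if $\theta-1+\beta>0$. With your choice $\beta<1-\theta$ the exponent is negative, so the $C_F$ term dominates and no contradiction is reached.
\begin{itemize}
\item You need $\beta\in(1-\theta,1)$, which is the paper's $\beta'\in(0,\theta)$.
\item With that choice, $\omega'\geq\frac12$ holds only for $t\le 2^{-1/(1-\beta)}$, not on all of $[0,1]$ as you claim.
\item This is harmless, since $\abs{\hat x-\hat y}\le 2/\sqrt{L_2}$ and $L_2$ is taken large.
\end{itemize}

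\textbf{In (iii), the $f$-term is not bounded.} Since $i(\Phi)<0$, \eqref{A3} only gives $\Phi(t,\cdot)\geq L^{-1}t^{i(\Phi)}$ for $t\geq 1$, and this lower bound decays. Hence
\begin{itemize}
\item $\abs{f}/\Phi\le L\|f\|_{L^{\infty}(B_1)}\abs{\xi_{\hat x}}^{-i(\Phi)}\le C L_1^{-i(\Phi)}$;
\item likewise the $\mathcal{K}$-part of $\abs{H}/\Phi$ is $L\mathcal{K}\abs{\xi_{\hat x}}^{-i(\Phi)}$, not $L\mathcal{K}$.
\end{itemize}
Multiplying the equation by $\abs{Du}^{-i(\Phi)}$ does not change these quotients. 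The argument still closes because $0<-i(\Phi)<1$ makes these terms $o(L_1)$. After absorbing the $\mathcal{M}$- and $C_F$-terms via $L_2$, you reach $2\lambda(1-\beta)L_1\le C(L_2+1)+CL_1^{-i(\Phi)}$, which fails for $L_1$ large. This is precisely the paper's final step \eqref{qiyimaodun}.

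Invoking Proposition \ref{qiyizhuantuihua} in (iii) is harmless but unnecessary. The jet gradients are nonzero, so the inequalities from Definition \ref{dingyi} already apply.
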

\begin{proof}
	Since the proof is similar to Proposition \ref{jin1}, here we mainly concentrate on the differences. Let $0<r<1$ be fixed. For the proof of (i) and (iii), it suffices to show that there exist constants $L_{1},L_{2}>1$ such that
	\begin{equation*}%\label{goal22}
		G(x_{0}):=\sup\limits_{(x,y)\in B_{r}\times B_{r}}\left\{u(x)-u(y)-L_{1}\omega(\abs{x-y})-L_{2}\Big(\lvert x-x_{0}\rvert^{2}+\lvert y-x_{0} \rvert^{2} \Big)\right\}\leq 0
	\end{equation*}
for each $x_{0}\in B_{r/2}$, where
	$$
	\omega(s)=\left\{
	\begin{array}{lcl}
		s- \omega_{0}s^{1+\beta}& \text{if} & 0\leq s\leq s_{0}:=\left(\frac{1}{(1+\beta)\omega_{0}}\right)^{1/\beta}, \\
		\omega(s_{0}) &\text{if}&  s>s_{0},
	\end{array}
	\right.
	$$
	with $\beta\in(0,\theta)$. Here we choose $\omega_{0}\in\left(0,\frac{1}{1+\beta}\right)$ such that $s_{0}\geq 1$. Observe that
	$$\omega(s)\geq 0,\quad 0\leq\omega^{\prime}(s)\leq 1,\quad \omega^{\prime\prime}(s)\leq 0,\quad \forall s\geq 0.$$
	
	We argue by contradiction by assuming that there exists $x_{0}\in B_{r/2}$ so that $G(x_{0})>0$ for all $L_{1},L_{2}>1$. Now we define two auxiliary functions $\psi,\Psi:\overline{B}_{r}\times\overline{B}_{r}\rightarrow \mathbb{R}$ by
	\begin{equation*}
		\begin{cases}
			\psi(x,y):=L_{1}\omega(\abs{x-y})+L_{2}\Big(\lvert x-x_{0}\rvert^{2}+\lvert y-x_{0} \rvert^{2} \Big),\\
			\Psi\left(x,y\right):=u(x)-u(y)-\psi(x,y).
		\end{cases}
	\end{equation*}
	Let $\left(\hat{x},\hat{y}\right)\in \overline{B}_{r}\times\overline{B}_{r}$ be a maximum point for $\Psi(x,y)$. Then it follows
	\begin{equation*}
		\Psi\left(\hat{x},\hat{y}\right)= G(x_{0})>0,
	\end{equation*}
\begin{equation}\label{31}
	L_{1}\omega(\abs{x-y})+L_{2}\Big(\lvert \hat{x}-x_{0}\rvert^{2}+\lvert \hat{y}-x_{0} \rvert^{2} \Big)<u(\hat{x})-u(\hat{y})\leq 2\|u\|_{L^{\infty}(B_{1})}\leq 2.
\end{equation}
By choosing $L_{2}\geq \frac{32}{r^{2}}$, we arrive at
\begin{equation}\label{youyong}
	\lvert \hat{x}-x_{0}\rvert,\lvert \hat{y}-x_{0} \rvert\leq \frac{r}{4},\quad \abs{\hat{x}-\hat{y}}\leq \sqrt{2\left(\lvert \hat{x}-x_{0}\rvert^{2}+\lvert \hat{y}-x_{0} \rvert^{2} \right)}\leq \frac{2}{\sqrt{L_{2}}}<1.
\end{equation}
This along with $x_{0}\in B_{r/2}$ implies that $\hat{x},\hat{y}$ belongs to the interior of $B_{r}$. In addition, $\hat{x}\neq\hat{y}$; otherwise $G(x_{0})\leq 0$ trivially.

 %It follows from \eqref{31} that
%\begin{equation}\label{youyong}
%	\abs{\hat{x}-\hat{y}}\leq \sqrt{2\left(\lvert \hat{x}-x_{0}\rvert^{2}+\lvert \hat{y}-x_{0} \rvert^{2} \right)}\leq\sqrt{2\frac{2}{L_{2}}}= \frac{2}{\sqrt{L_{2}}}.
%\end{equation}		

As in the proof of Proposition \ref{jin1}, we obtain a limiting subjet $\left(\xi_{\hat{x}},X\right)$ of $u$ at $\hat{x}$ and a limiting superjet $\left(\xi_{\hat{y}},Y\right)$ of $u$ at $\hat{y}$, such that the matrices $X,Y\in S^{d}$ satisfy the matrix inequality \eqref{matrix}.
%\begin{equation}\label{matrix2}
%	\left(
%	\begin{array}{cc}
%		X & 0 \\
%		0 & -Y \\
%	\end{array}
%	\right)
%	\leq  \left(
%	\begin{array}{cc}
%		A & -A \\
%		-A & A \\
%	\end{array}
%	\right)+
%	(2L_{2}+\epsilon)
%	\left(
%	\begin{array}{cc}
%		I & 0 \\
%		0 & I \\
%	\end{array}
%	\right)
%\end{equation}
%with $\epsilon\in (0,1)$, that only depends on the norm of $A$ and can be made sufficiently small. 
Here,
\begin{equation*}
	\xi_{\hat{x}}:=L_{1}\omega^{\prime}(\abs{\hat{x}-\hat{y}})\frac{\hat{x}-\hat{y}}{\abs{\hat{x}-\hat{y}}}+2L_{2}(\hat{x}-x_{0}), \quad  \xi_{\hat{y}}:=L_{1}\omega^{\prime}(\abs{\hat{x}-\hat{y}})\frac{\hat{x}-\hat{y}}{\abs{\hat{x}-\hat{y}}}-2L_{2}(\hat{y}-x_{0}),
\end{equation*}
$$A:=L_{1}\left[\frac{\omega^{\prime}(\abs{\hat{x}-\hat{y}})}{\abs{\hat{x}-\hat{y}}}I+\left(\omega^{\prime\prime}(\abs{\hat{x}-\hat{y}})-\frac{\omega^{\prime}(\abs{\hat{x}-\hat{y}})}{\abs{\hat{x}-\hat{y}}}\right)\frac{(\hat{x}-\hat{y})\otimes(\hat{x}-\hat{y})}{\abs{\hat{x}-\hat{y}}^{2}}\right].$$
We apply \eqref{matrix} to vectors of the form $(z,z)\in\mathbb{R}^{2d}$ with $\abs{z}=1$, to obtain
\begin{equation}\label{superasati}
	\left\langle(X-Y)z,z\right\rangle\leq \left(4L_{2}+2\epsilon\right)|z|^{2}.
\end{equation}
This means that all the eigenvalues of $X-Y$ are less than or equal to $4L_{2}+2\epsilon$. In particular, applying \eqref{matrix} to the vector $\left(\frac{\hat{x}-\hat{y}}{\abs{\hat{x}-\hat{y}}},\frac{\hat{y}-\hat{x}}{\abs{\hat{x}-\hat{y}}}\right)\in\mathbb{R}^{2d}$,
we get
\begin{equation*}
		\left\langle(X-Y)\frac{\hat{x}-\hat{y}}{\abs{\hat{x}-\hat{y}}},\frac{\hat{x}-\hat{y}}{\abs{\hat{x}-\hat{y}}}\right\rangle\leq \left(4L_{1}\omega^{\prime\prime}(\abs{\hat{x}-\hat{y}})+4L_{2}+2\epsilon\right)\Abs{\frac{\hat{x}-\hat{y}}{\abs{\hat{x}-\hat{y}}}}^{2}.
\end{equation*}	
This yields that
at least one eigenvalue of $X-Y$ is less than $4L_{1}\omega^{\prime\prime}(\abs{\hat{x}-\hat{y}})+4L_{2}+2\epsilon$. We choose $L_{1}>\frac{4L_{2}+2}{4\beta(1+\beta)\omega_{0}}$, it follows from $\abs{\hat{x}-\hat{y}}<1$ and $\beta<\theta<1$ that
\begin{align*}
	 4L_{1}\omega^{\prime\prime}(\abs{\hat{x}-\hat{y}})+4L_{2}+2\epsilon&=-4L_{1}\beta(\beta+1)\omega_{0}\abs{\hat{x}-\hat{y}}^{\beta-1}+4L_{2}+2\epsilon\\
	&<-4L_{1}\beta(\beta+1)\omega_{0}+4L_{2}+2<0.
\end{align*}
This means that at least one eigenvalue of
$X-Y$ is negative. By the definition of Pucci extremal operator, we have
\begin{equation}\label{ps}
	P_{\lambda,\Lambda}^{+}(X-Y)\leq \left(\lambda+\Lambda(d-1)\right)(4L_{2}+2\epsilon)-4L_{1}\lambda\beta(\beta+1)\omega_{0}\abs{\hat{x}-\hat{y}}^{\beta-1}.
\end{equation}
Further, applying \eqref{matrix} to the vectors $(0,z)\in\mathbb{R}^{2d}$ with $\abs{z}=1$,
we get
\begin{equation*}
	\left\langle-Yz,z\right\rangle\leq \left\langle Az,z\right\rangle+2L_{2}+\epsilon.
\end{equation*}
This together with the facts that $A- L_{1}\frac{\omega^{\prime}(\abs{\hat{x}-\hat{y}})}{\abs{\hat{x}-\hat{y}}}I$ is non-positive definite matrix and $0\leq \omega^{\prime}(s)\leq 1$ for $s\geq 0$ yields that
\begin{equation*}
	\|Y\|\leq L_{1}\abs{\hat{x}-\hat{y}}^{-1}+2L_{2}+\epsilon.
\end{equation*}
Then it follows that
\begin{equation}\label{xishu}
	C_{F}\|Y\|\Abs{\hat{x}-\hat{y}}^{\theta}\leq  C_{F}L_{1}\Abs{\hat{x}-\hat{y}}^{\theta-1}+C_{F}\left(2L_{2}+\epsilon\right).
\end{equation}

{\bf Case 1.} Suppose $0<m\leq i(\Phi)$ and $\abs{\xi}\geq A_{0}$, where $A_{0}$ will be determined later. We first choose $L_{1}>L_{2}$, then it follows from $\Abs{\hat{x}-x_{0}}\leq \frac{r}{4}$ and $0\leq \omega^{\prime}(s)\leq 1$ that
\begin{equation*}
	\abs{\xi_{\hat{x}}}\leq L_{1}|\omega^{\prime}(\abs{\hat{x}-\hat{y}})|+2L_{2}\Abs{\hat{x}-x_{0}}< 2L_{1}.
\end{equation*}
Now we take $A_{0}=3L_{1}$ so that
\begin{align}\label{xjie}
	\abs{\xi_{\hat{x}}+\xi}\geq\abs{\xi}- \abs{\xi_{\hat{x}}}\geq A_{0}-2L_{1}=L_{1}>1.
\end{align}
In exactly the same way, we get
\begin{equation}\label{yjie}
	\abs{\xi_{\hat{y}}+\xi}> 1.
\end{equation}
As the same derivation of \eqref{ainequality}, we have
\begin{equation}\label{bds}
	\begin{split}
		&\frac{f(\hat{x})}{\Phi(\abs{\xi_{\hat{x}}+\xi},\hat{x})}-\frac{f(\hat{y})}{\Phi(\abs{\xi_{\hat{y}}+\xi},\hat{y})}\\
		\leq& P_{\lambda,\Lambda}^{+}(X-Y)+\frac{H(\xi_{\hat{x}}+\xi,\hat{x})}{\Phi(\abs{\xi_{\hat{x}}+\xi},\hat{x})}-\frac{H(\xi_{\hat{y}}+\xi,\hat{y})}{\Phi(\abs{\xi_{\hat{y}}+\xi},\hat{y})} +C_{F}\|Y\|\abs{\hat{x}-\hat{y}}^{\theta}.
	\end{split}
\end{equation}
A combination of \eqref{xjie}, \eqref{yjie}, the assumptions \eqref{A3}, \eqref{A4}, and $0<m\leq i(\Phi)$ yields that
\begin{equation}\label{1}
	\frac{f(\hat{x})}{\Phi(\abs{\xi_{\hat{x}}+\xi},\hat{x})}-\frac{f(\hat{y})}{\Phi(\abs{\xi_{\hat{y}}+\xi},\hat{y})}\geq \frac{-L\|f\|_{L^{\infty}(B_{1})}}{\abs{\xi_{\hat{x}}+\xi}^{i(\Phi)}}-\frac{L\|f\|_{L^{\infty}(B_{1})}}{\abs{\xi_{\hat{y}}+\xi}^{i(\Phi)}}\geq -2L\|f\|_{L^{\infty}(B_{1})},
\end{equation}
\begin{equation}\label{2}
	\frac{H(\xi_{\hat{x}}+\xi,\hat{x})}{\Phi(\abs{\xi_{\hat{x}}+\xi},\hat{x})}-\frac{H(\xi_{\hat{y}}+\xi,\hat{y})}{\Phi(\abs{\xi_{\hat{y}}+\xi},\hat{y})}\leq L\left( \frac{\mathcal{K}+\mathcal{M}\abs{\xi_{\hat{x}}+\xi}^{m}}{\abs{\xi_{\hat{x}}+\xi}^{i(\Phi)}}+\frac{\mathcal{K}+\mathcal{M}\abs{\xi_{\hat{y}}+\xi}^{m}}{\abs{\xi_{\hat{y}}+\xi}^{i(\Phi)}} \right) \leq
	2L(\mathcal{K}+\mathcal{M}).
\end{equation}
%\begin{equation}\label{1}
%	\frac{f(\hat{y})}{\Phi\left(\abs{\xi_{\hat{y}}+\xi},\hat{y}\right)}\leq  \frac{L\|f\|_{L^{\infty}(B_{1})}}{\abs{\xi_{\hat{y}}+\xi}^{i(\Phi)}}\leq L\|f\|_{L^{\infty}(B_{1})},
%\end{equation}
%\begin{equation}\label{2}
%	\frac{f(\hat{x})}{\Phi\left(\abs{\xi_{\hat{x}}+\xi},\hat{x}\right)}\geq  \frac{-L\|f\|_{L^{\infty}(B_{1})}}{\abs{\xi_{\hat{x}}+\xi}^{i(\Phi)}}\geq -L\|f\|_{L^{\infty}(B_{1})},
%\end{equation}
%\begin{equation}\label{3}
%	\frac{h(\hat{x})\abs{\xi_{\hat{x}}+\xi}^{m}}{\Phi\left(\abs{\xi_{\hat{x}}+\xi},\hat{x}\right)}\leq \frac{L\|h\|_{L^{\infty}(B_{1})}\abs{\xi_{\hat{x}}+\xi}^{m}}{\abs{\xi_{\hat{x}}+\xi}^{i(\Phi)}}\leq
%	L\|h\|_{L^{\infty}(B_{1})},
%\end{equation}
%\begin{equation}\label{4}
%	\frac{h(\hat{y})\abs{\xi_{\hat{y}}+\xi}^{m}}{\Phi\left(\abs{\xi_{\hat{y}}+\xi},\hat{y}\right)}\leq \frac{L\|h\|_{L^{\infty}(B_{1})}\abs{\xi_{\hat{y}}+\xi}^{m}}{\abs{\xi_{\hat{y}}+\xi}^{i(\Phi)}}\leq
%	L\|h\|_{L^{\infty}(B_{1})}.
%\end{equation}
Substituting \eqref{1}-\eqref{2} into \eqref{bds} and applying \eqref{ps},\eqref{xishu}, we eventually deduce
\begin{equation*}
		-2L\|f\|_{L^{\infty}(B_{1})}
		\leq C_{1}+2L(\mathcal{K}+\mathcal{M})+
		L_{1}\abs{\hat{x}-\hat{y}}^{\beta-1}\left(C_{F}\abs{\hat{x}-\hat{y}}^{\theta-\beta}-4\lambda\omega_{0}\beta(1+\beta)\right),
\end{equation*}
where $C_{1}:=\left(\Lambda(d-1)+\lambda\right)(4L_{2}+2)+C_{F}(2L_{2}+1)$. Choose $L_{2}\geq 4\left(\frac{C_{F}}{2\lambda\omega_{0}\beta(1+\beta)}\right)^{2/(\theta-\beta)}$, we exploit \eqref{youyong} and $\beta<\theta$ to derive
\begin{equation}\label{azuihou11}
C_{F}\Abs{\hat{x}-\hat{y}}^{\theta-\beta}\leq C_{F}\left(\frac{2}{\sqrt{L_{2}}}\right)^{\theta-\beta}\leq 2\lambda\omega_{0}\beta(1+\beta).
\end{equation}
By means of \eqref{azuihou11} %$L_{1}> 1$, %$\|f\|_{L^{\infty}(B_{1})},\|h\|_{L^{\infty}(B_{1})}\leq \delta<1$ 
and $\abs{\hat{x}-\hat{y}}^{\beta-1}<1$, we further arrive at
\begin{equation*}
		-2L(\|f\|_{L^{\infty}(B_{1})}+\mathcal{K}+\mathcal{M})\leq  C_{1}-2L_{1}\lambda\omega_{0}\beta(1+\beta)\abs{\hat{x}-\hat{y}}^{\beta-1}<C_{1}-2L_{1}\lambda\omega_{0}\beta(1+\beta).
\end{equation*}
As a result, if we choose $L_{1}\geq \frac{C_{1}+2L(\|f\|_{L^{\infty}(B_{1})}+\mathcal{K}+\mathcal{M})}{2\lambda\omega_{0}\beta(1+\beta)}$, we reach a contradiction.

{\bf Case 2.} Suppose $0<m\leq i(\Phi)$ and $\abs{\xi}< A_{0}$. In this case, we will use the same arguments as in Proposition \ref{jin1}. In what follows, we mainly explain the differences. Choose $L_{1}>\frac{L_{2}r^{2-\gamma}}{\gamma2^{1-\gamma}}$ so that
\begin{equation*}
	\frac{\gamma L_{1}}{2}\Abs{\hat{x}-\hat{y}}^{\gamma-1}\leq \abs{\xi_{\hat{x}}},\abs{\xi_{\hat{y}}}\leq 2\gamma L_{1}\Abs{\hat{x}-\hat{y}}^{\gamma-1}.
\end{equation*}
To proceed, we select $L_{2}>\left(\frac{2^{4-\gamma}}{\gamma}\right)^{2/(1-\gamma)}$. Then it follows from $\abs{\xi}< A_{0}=3L_{1}$, $\Abs{\hat{x}-\hat{y}}\leq \frac{2}{\sqrt{L_{2}}}$ and $\gamma<1$ that
\begin{equation*}
	\abs{\xi_{\hat{x}}+\xi}\geq \abs{\xi_{\hat{x}}}-\abs{\xi}\geq \frac{\gamma L_{1}}{2}\Abs{\hat{x}-\hat{y}}^{\gamma-1}-A_{0}\geq \frac{\gamma L_{1}}{2}\left(\frac{2}{\sqrt{L_{2}}}\right)^{\gamma-1}-3L_{1}\geq L_{1}>1.
\end{equation*}	
Also, we have $\abs{\xi_{\hat{y}}+\xi}>1$. By analogy with the estimates of \eqref{1}-\eqref{2}, %according to $0<m\leq i(\Phi)$, 
we obtain
\begin{equation*}
	\frac{f(\hat{x})}{\Phi(\abs{\xi_{\hat{x}}+\xi},\hat{x})}-\frac{f(\hat{y})}{\Phi(\abs{\xi_{\hat{y}}+\xi},\hat{y})}\geq -2L\|f\|_{L^{\infty}(B_{1})},
\end{equation*}
\begin{equation*}
	\frac{H(\xi_{\hat{x}}+\xi,\hat{x})}{\Phi(\abs{\xi_{\hat{x}}+\xi},\hat{x})}-\frac{H(\xi_{\hat{y}}+\xi,\hat{y})}{\Phi(\abs{\xi_{\hat{y}}+\xi},\hat{y})}\leq 2L(\mathcal{K}+\mathcal{M}).
\end{equation*}
Substituting the above inequalities into \eqref{ainequality}, we arrive at
\begin{equation*}
	-2L\|f\|_{L^{\infty}(B_{1})}\leq C_{1}+2L(\mathcal{K}+\mathcal{M})+L_{1}\Abs{\hat{x}-\hat{y}}^{\gamma-2}\left(C_{F}\gamma\Abs{\hat{x}-\hat{y}}^{\theta}-4\lambda\gamma(1-\gamma)\right),
\end{equation*}
where $C_{1}:=\left(\Lambda(d-1)+\lambda\right)(4L_{2}+2)+C_{F}(2L_{2}+1)$.
We now select $L_{2}\geq 4\left(\frac{C_{F}}{2\lambda(1-\gamma)}\right)^{2/\theta}$ and apply $\Abs{\hat{x}-\hat{y}}\leq \frac{2}{\sqrt{L_{2}}}$ to derive
$$C_{F}\Abs{\hat{x}-\hat{y}}^{\theta}\leq C_{F}\left(\frac{2}{\sqrt{L_{2}}}\right)^{\theta}\leq 2\lambda(1-\gamma).$$
Then it follows that
\begin{equation*}
	2L_{1}\lambda\gamma(1-\gamma)< C_{1}+2L(\|f\|_{L^{\infty}(B_{1})}+\mathcal{K}+\mathcal{M}).
\end{equation*}
As a consequence, if we choose $L_{1}\geq \frac{C_{1}+2L(\|f\|_{L^{\infty}(B_{1})}+\mathcal{K}+\mathcal{M})}{2\lambda\gamma(1-\gamma)}$, we  reach a contradiction.

{\bf Case 3.} Suppose $-1<i(\Phi)<0$, $0<m\leq 1+i(\Phi)$ and $\xi=0$. We first choose $L_{1}>\max \{4,L_{2}\}$ and $L_{2}\geq \left(2^{2+\beta}\omega_{0}(1+\beta)\right)^{2/\beta}$ so that
$\abs{\xi_{\hat{x}}}< 2L_{1}$ and
\begin{align*}
	\abs{\xi_{\hat{x}}}\geq& L_{1}\omega^{\prime}(\abs{\hat{x}-\hat{y}})-\frac{L_{2}}{2}=L_{1}\left(1-\omega_{0}(1+\beta)\abs{\hat{x}-\hat{y}}^{\beta}\right)-\frac{L_{2}}{2}\\
	\geq&L_{1}\left(1-\omega_{0}(1+\beta)\left(\frac{2}{\sqrt{L_{2}}}\right)^{\beta}\right)-\frac{L_{2}}{2}\geq \frac{3L_{1}}{4}-\frac{L_{2}}{2}\geq \frac{L_{1}}{4}>1.
\end{align*}
Also, we have $1<\abs{\xi_{\hat{y}}}\leq 2L_{1}$.
Applying the assumptions \eqref{A3}, \eqref{A4}, in combination with $-1< i(\Phi)<0$, $0<m\leq 1+i(\Phi)$, and $\Abs{\hat{x}-\hat{y}}^{\theta-1}>1$, we deduce that
\begin{equation*}
	\frac{f(\hat{x})}{\Phi\left(\abs{\xi_{\hat{x}}},\hat{x}\right)}-\frac{f(\hat{y})}{\Phi\left(\abs{\xi_{\hat{y}}},\hat{y}\right)}\geq \frac{-L\|f\|_{L^{\infty}(B_{1})}}{\abs{\xi_{\hat{x}}}^{i(\Phi)}}- \frac{L\|f\|_{L^{\infty}(B_{1})}}{\abs{\xi_{\hat{y}}}^{i(\Phi)}}\geq \frac{-4L\|f\|_{L^{\infty}(B_{1})}}{L_{1}^{i(\Phi)}},
\end{equation*}
%\begin{equation*}
%	\frac{f(\hat{x})}{\Phi\left(\abs{\xi_{\hat{x}}},\hat{x}\right)}\geq  \frac{-L\|f\|_{L^{\infty}(B_{1})}}{\abs{\xi_{\hat{x}}}^{i(\Phi)}}\geq \frac{-2L\|f\|_{L^{\infty}(B_{1})}}{L_{1}^{i(\Phi)}},
%\end{equation*}
\begin{equation*}
	\begin{split}
		\frac{H(\xi_{\hat{x}},\hat{x})}{\Phi\left(\abs{\xi_{\hat{x}}},\hat{x}\right)}-\frac{H(\xi_{\hat{y}},\hat{y})}{\Phi\left(\abs{\xi_{\hat{y}}},\hat{y}\right)}
		&\leq L\bigg( \frac{\mathcal{K}+\mathcal{M}\abs{\xi_{\hat{x}}}^{m}}{\abs{\xi_{\hat{x}}}^{i(\Phi)}}+\frac{\mathcal{K}+\mathcal{M}\abs{\xi_{\hat{y}}}^{m} }{\abs{\xi_{\hat{y}}}^{i(\Phi)}} \bigg)\\
		&\leq 4L\mathcal{K}L_{1}^{-i(\Phi)}+2L\mathcal{M}(2L_{1})^{m-i(\Phi)}\\
		&\leq 	4L\mathcal{K}L_{1}^{-i(\Phi)}+4L\mathcal{M}L_{1}\Abs{\hat{x}-\hat{y}}^{\theta-1}.
	\end{split}
\end{equation*}
%\begin{equation*}
%	\frac{h(\hat{y})\abs{\xi_{\hat{y}}}^{m}}{\Phi\left(\abs{\xi_{\hat{y}}},\hat{y}\right)}\leq \frac{L\|h\|_{L^{\infty}(B_{1})}\abs{\xi_{\hat{y}}}^{m}}{\abs{\xi_{\hat{y}}}^{i(\Phi)}}\leq
%	L\|h\|_{L^{\infty}(B_{1})}(2L_{1})^{m-i(\Phi)}\leq 2L\|h\|_{L^{\infty}(B_{1})}L_{1}\Abs{\hat{x}-\hat{y}}^{\theta-1}.
%\end{equation*}
Substituting the aforementioned inequalities into \eqref{bds}, and combining \eqref{ps} with \eqref{xishu}, we get
\begin{equation*}
	-4L\left(\|f\|_{L^{\infty}(B_{1})}+\mathcal{K}\right)L_{1}^{-i(\Phi)}
	\leq C_{1}+
	 L_{1}\abs{\hat{x}-\hat{y}}^{\beta-1}\left[\left(C_{F}+4L\mathcal{M}\right)\abs{\hat{x}-\hat{y}}^{\theta-\beta}-4\lambda\omega_{0}\beta(1+\beta)\right],
\end{equation*}
where $C_{1}:=\left(\Lambda(d-1)+\lambda\right)(4L_{2}+2)+C_{F}(2L_{2}+1)$. Choosing $L_{2}\geq 4\left(\frac{C_{F}+4L\mathcal{M}}{2\lambda\omega_{0}\beta(1+\beta)}\right)^{2/(\theta-\beta)}$ so that
\begin{equation*}
	\left(C_{F}+4L\mathcal{M}\right)\Abs{\hat{x}-\hat{y}}^{\theta-\beta}\leq \left(C_{F}+4L\mathcal{M}\right)\left(\frac{2}{\sqrt{L_{2}}}\right)^{\theta-\beta}\leq 2\lambda\omega_{0}\beta(1+\beta).
\end{equation*}
Then it follows that
\begin{equation}\label{qiyimaodun}
	2L_{1}\lambda\omega_{0}\beta(1+\beta)\leq 4L\left(\|f\|_{L^{\infty}(B_{1})}+\mathcal{K}\right)L_{1}^{-i(\Phi)}
	 +C_{1}.
\end{equation}
In view of $-1<i(\Phi)<0$, taking $L_{1}$ large enough, we obtain a contradiction with \eqref{qiyimaodun}.
This completes the proof.
\end{proof}
%%%%%%%%%%%%%%%%%%%%%%%%%%%%%%%%%%%%%%%%%%%%%%%%%
%%%%%%%%%%%%%%%%%%%%%%%%%%%%%%%%%%%%%%%%%%%%%%%%%
\section{Tangential path}\label{section4}
This section is solely dedicated to the proof of a key approximation lemma for the degenerate case via compactness arguments, which plays a paramount role in our forthcoming geometric argument.
\begin{lemma}[{\bf Approximation Lemma}]\label{bijin}
	 Suppose the assumptions \eqref{A1}-\eqref{A5} hold true with $i(\Phi)\geq 0$ and 
	$\nu_{0}=\nu_{1}=1$. %and $0<m\leq 1+i(\Phi)$. 
	Let $\xi\in \rn$ be an arbitrarily vector and  $u\in C(B_{1})$ be a normalized viscosity solution of equation \eqref{model111}. Then, for any $\varepsilon>0$, there exists $\sigma\in (0,1)$ depending on $d,\lambda,\Lambda,m,i(\Phi),L,$ and $\varepsilon$ such that if
	\begin{equation*}
	\max\left\{\|{\rm osc}_{{F}}\|_{L^{\infty}\left(B_{1}\right)},\|f\|_{L^{\infty}(B_{1})},\mathcal{K},\mathcal{M}\left(\abs{\xi}^{(m-i(\Phi))_{+}}+1\right) \right\}\leq \sigma,
	\end{equation*}
then one can find $v\in C^{1,\alpha_{0}}(B_{3/4})$, for some $\alpha_{0}\in(0,1)$, satisfying
%which is a viscosity solution a constant coefficient, homogeneous, uniformly $(\lambda,\Lambda)$-elliptic equation\begin{equation}\label{tiaohe}\mathcal{F}(D^{2}v)=0,\quad {\rm in}\quad B_{3/4}\end{equation}such that
	 \begin{equation*}
	 	\|u-v\|_{L^{\infty}(B_{1/2})}\leq \varepsilon.
	 \end{equation*}
Furthermore, $\|v\|_{C^{1,\alpha_{0}}(B_{3/4})}\leq C$, where $C$ depends only on $d,\lambda,\Lambda$.
\end{lemma}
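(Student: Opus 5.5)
We argue by contradiction, in the spirit of the compactness arguments of \cite{Ricarte,Baasandorj,Andrade}. Suppose the lemma fails for some $\varepsilon_0>0$. Then there are sequences $F_k$, $\Phi_k$, $H_k$, $f_k$, $\xi_k\in\rn$, constants $\mathcal{K}_k,\mathcal{M}_k$, and normalized viscosity solutions $u_k$ of
\[
\Phi_k(\abs{Du_k+\xi_k},x)F_k(D^2u_k,x)+H_k(Du_k+\xi_k,x)=f_k \quad\text{in } B_1 .
\]
These satisfy \eqref{A1}--\eqref{A5} with the same $\lambda,\Lambda,L,i(\Phi),s(\Phi),m$ and with $\nu_0=\nu_1=1$, together with
\[
\max\bigl\{\|{\rm osc}_{F_k}\|_{L^\infty(B_1)},\|f_k\|_{L^\infty(B_1)},\mathcal{K}_k,\mathcal{M}_k(\abs{\xi_k}^{(m-i(\Phi))_+}+1)\bigr\}\leq 1/k .
\]
On the other hand, $\|u_k-v\|_{L^\infty(B_{1/2})}>\varepsilon_0$ for every $v\in C^{1,\alpha_0}(B_{3/4})$ with $\|v\|_{C^{1,\alpha_0}(B_{3/4})}\le C(d,\lambda,\Lambda)$.

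\textbf{Step 1: compactness.} We get uniform H\"older bounds on $B_{3/4}$ (after a standard translation and scaling of Propositions \ref{jin1}--\ref{jin2}, which were stated on $B_{1/2}$). If $i(\Phi)<m$, the smallness $\mathcal{M}_k(\abs{\xi_k}^{m-i(\Phi)}+1)\le 1/k\le\kappa_0$ for $k$ large is exactly \eqref{control}, so Proposition \ref{jin1} applies. If $0<m\le i(\Phi)$, Proposition \ref{jin2}(i)--(ii) applies regardless of the size of $\xi_k$. In both cases the constants depend only on universal data, because $\|f_k\|_\infty,\mathcal{K}_k,\mathcal{M}_k\le 1$. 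By Arzel\`a--Ascoli, up to a subsequence $u_k\to u_\infty$ locally uniformly in $B_{3/4}$. Since ${\rm osc}_{F_k}\to0$ and the $F_k(\cdot,0)$ are uniformly $(\lambda,\Lambda)$-elliptic and hence equi-Lipschitz, we also get $F_k(\cdot,x)\to F_\infty(\cdot)$ locally uniformly on $S^d$, uniformly in $x$. The limit $F_\infty$ is $(\lambda,\Lambda)$-elliptic.

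\textbf{Step 2: the limit equation.} We show that $u_\infty$ is a viscosity solution of $F_\infty(D^2u_\infty)=0$ in $B_{3/4}$. By Caffarelli's theory \cite{Caff1}, $u_\infty$ is then $C^{1,\alpha_0}$ with $\|u_\infty\|_{C^{1,\alpha_0}(B_{1/2}')}\le C(d,\lambda,\Lambda)\|u_\infty\|_\infty\le C$. Taking $v=u_\infty$ (on a slightly smaller ball, or after rescaling) contradicts the choice of $u_k$.

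\textbf{Main obstacle: verifying Step 2.} The degenerate factor $\Phi_k(\abs{D\varphi+\xi_k},x)$ may vanish, so it cannot simply be divided out, and $\xi_k$ may be unbounded. We test with a quadratic polynomial $\varphi$ touching $u_\infty$ from below at $x_0$, say $\varphi(x)=u_\infty(x_0)+b\cdot(x-x_0)+\frac12\langle M(x-x_0),x-x_0\rangle$, and we aim to show $F_\infty(M)\le0$. Suppose instead that $F_\infty(M)>0$. We may assume strict touching by adding $-\eta\abs{x-x_0}^2$. Then $\varphi+c_k$ touches $u_k$ at points $x_k\to x_0$, and the supersolution-type inequality for $u_k$ gives
\[
\Phi_k(\abs{D\varphi(x_k)+\xi_k},x_k)F_k(M,x_k)\le f_k(x_k)-H_k(D\varphi(x_k)+\xi_k,x_k).
\]
There are two cases.

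If $\abs{D\varphi(x_k)+\xi_k}\ge\mu$ for some $\mu>0$ along a subsequence, we divide by $\Phi_k$. By \eqref{A3},
\[
\Phi_k(t,x)\ge L^{-1}\min\{t^{i(\Phi)},t^{s(\Phi)}\},
\]
and by \eqref{A4},
\[
\abs{H_k}/\Phi_k\le L(\mathcal{K}_k+\mathcal{M}_k t^m)\max\{t^{-i(\Phi)},t^{-s(\Phi)}\}.
\]
For $t$ bounded this tends to $0$. For $t\ge1$ it is at most $L\mathcal{K}_k+L\mathcal{M}_k t^{m-i(\Phi)}$. When $m\le i(\Phi)$ this is $\le L(\mathcal{K}_k+\mathcal{M}_k)\to0$. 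When $m>i(\Phi)$ we have $t\le\abs{\xi_k}+C$, so $\mathcal{M}_kt^{m-i(\Phi)}\lesssim\mathcal{M}_k(\abs{\xi_k}^{m-i(\Phi)}+1)\to0$; this is precisely why the hypothesis carries the weight $\abs{\xi}^{(m-i(\Phi))_+}$. Passing to the limit gives $F_\infty(M)\le0$.

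If instead $D\varphi(x_k)+\xi_k\to0$, the sequence $\xi_k\to-b$ is bounded, and we use the perturbation trick of \cite{Imbert1,Ricarte}. Since $F_\infty(M)>0$, a small linear perturbation $\varphi+e\cdot(x-x_0)$ with $\abs{e}$ small still touches $u_k$ near $x_0$ at some points $y_k$ (after a small correction), and $\abs{b+e+\xi_k}\ge\abs{e}/2$ for large $k$. This puts us back in the first case and yields the contradiction $F_\infty(M)\le0$.

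If $b+\xi_k\to0$ for every choice of $e$, we use the Imbert--Silvestre lemma: when $u_\infty-\varphi$ has a strict minimum and $F_\infty(M)>0$, the function $u_\infty-\varphi-e\cdot x$ has a local minimum near $x_0$ for all small $e$, and we choose $e$ avoiding the bad direction. The subsolution inequality is handled symmetrically. Ellipticity constants are preserved throughout, and the constants depend only on $d,\lambda,\Lambda$, as stated.
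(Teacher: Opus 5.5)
Your compactness step and your nondegenerate alternative (dividing by $\Phi_k$ when $|D\varphi(x_k)+\xi_k|\ge\mu$, with the weight $|\xi_k|^{(m-i(\Phi))_+}$ absorbing the Hamiltonian when $\xi_k$ is unbounded) are correct and match Step 1 and Step 2, Case 1, of the paper. The gap is in the degenerate alternative $b+\xi_k\to0$.

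You perturb by $e\cdot(x-x_0)$ and claim this puts you back in the first case because $|b+e+\xi_k|\ge|e|/2$. But the perturbed test function touches $u_k$ at a new point $y_k$. Including your strictifying term, its gradient there is $b+e+(M-2\eta I)(y_k-x_0)$, not $b+e$. A linear perturbation of size $|e|$ moves the minimum point by an amount of order $|e|$, so this extra term can cancel $e$. Here is an example. Take $x_0=0$, $b=-\xi_\infty$, $u_\infty(x)=\frac12\langle Mx,x\rangle+b\cdot x$, and $0\neq e\in\ker M$. Then $\varphi-\eta|x|^2+e\cdot x$ touches $u_\infty$ from below (up to a constant) at $e/(2\eta)$, and there its gradient plus $\xi_\infty$ is exactly $0$. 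Your last paragraph does not close the gap. That $u_\infty-\varphi-e\cdot x$ has a local minimum near $x_0$ follows from strictness alone. For a general $u_\infty$ there is no identified ``bad direction'' to avoid, because any cancellation depends on where the minimum moves. Note also that $F_\infty(M)>0$ plays no real role in your sketch, whereas it is exactly what must be exploited here.

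The missing idea, which the paper uses (after Imbert--Silvestre and Ara\'ujo--Ricarte--Teixeira), is the spectral structure of $M$. Take $\varphi$ touching strictly, so $M=D^2\varphi$. Since $F_\infty(0)=0$, the assumption $F_\infty(M)>0$ forces $M$ to have a positive eigenvalue. Let $T$ be the span of the corresponding eigenvectors; then $T$ and $T^\perp$ are $M$-invariant and $M$ is positive definite on $T$. With $x_0=0$, perturb by $\gamma|P_Tx|$ instead of a linear function, and let $y_k$ be the touching points with $u_k$.

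If $P_Ty_k=0$, then $\varphi+\gamma\langle e,x\rangle$ with any unit $e\in T$ still touches $u_k$ from below at $y_k$. Moreover $My_k\in T^\perp$ is orthogonal to $e$, so $|My_k+\gamma e|\ge\gamma$.

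If $P_Ty_k\neq0$, put $\zeta_k=P_Ty_k/|P_Ty_k|$. Then $\langle My_k,\zeta_k\rangle=\langle MP_Ty_k,P_Ty_k\rangle/|P_Ty_k|>0$, so $|My_k+\gamma\zeta_k|\ge\gamma$. The extra Hessian $\gamma D^2|P_Tx|(y_k)\ge0$ can be discarded at level $k$ by ellipticity.

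In both cases the gradient argument has modulus at least $\gamma-|b+\xi_k|\ge\gamma/2$ for large $k$. Your nondegenerate estimate then gives $F_\infty(M)\le0$, a contradiction.

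If you want to keep linear perturbations, you need a genuine argument instead. First prove $F_\infty(D^2\psi(y))\le0$ whenever $\psi$ touches $u_\infty$ from below at $y$ with $D\psi(y)+\xi_\infty\neq0$. Then let $\varphi_\eta$ be your strictified test function, $N=D^2\varphi_\eta$, and $y_e$ any minimum point of $u_\infty-\varphi_\eta-e\cdot x$. Suppose $Ny_e+e=0$ for all such $y_e$ and all small $e$. Then $N$ is invertible, and $u_\infty-\varphi_\eta$ has a supporting affine function of slope $-Ny$ at every $y$ near $0$. Monotonicity of these slopes forces $N\le0$, contradicting $F_\infty(N)>0$.
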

\begin{proof}
	The proof is based on a contradiction argument. If the claim fails, then there exist $\varepsilon_{0}>0$ and sequences
	of functions $\{F_{j}\}_{j\in \mathbb{N}}$, $\{\Phi_{j}\}_{j\in \mathbb{N}}$, $\{H_{j}\}_{j\in \mathbb{N}}$, $\{f_{j}\}_{j\in \mathbb{N}}$, $\{u_{j}\}_{j\in \mathbb{N}}$ and a sequence of vectors $\{\xi_{j}\}_{j\in \mathbb{N}}$ such that
	\begin{itemize}
		\item   [{\rm$({{\rm i}})$}] $u_{j}\in C({B_{1}})$ with $ \|u_{j}\|_{L^{\infty}(B_{1})}\leq 1$ is a viscosity solution of the following equation 
		\begin{equation}\label{model333}
			\Phi_{j}(\abs{Du_{j}+\xi_{j}}, x)F_{j}(D^2 u_{j}, x)+H_{j}(Du_{j}+\xi_{j},x) =f_{j}(x) \quad  \text{in} \quad B_{1},
		\end{equation}
		where $F_{j}:S^{d}\times B_{1}\rightarrow \mathbb{R}$ is uniformly $(\lambda,\Lambda)$-elliptic, $f_{j}\in C({B_{1}})$;
		%$F_{j}:S^{d}\times B_{1}\rightarrow \mathbb{R}$ is uniformly $(\lambda,\Lambda)$-elliptic with $\|{\rm osc}_{{F_{j}}}\|_{L^{\infty}\left(B_{1}\right)}\leq \frac{1}{j}$;
		\item [{\rm$({{\rm ii}})$}] $\Phi_{j}\in C\left([0,\infty)\times B_{1},[0,\infty)\right)$ such that the map $t\mapsto \frac{\Phi_{j}(t,x)}{t^{i(\Phi)}}$ is almost non-decreasing and the map $t\mapsto \frac{\Phi_{j}(t,x)}{t^{s(\Phi)}}$ is almost non-increasing  with the same constant $L\geq 1$, and $\Phi_{j}(1,x)=1$ for all $x\in B_{1}$;
		\item [ {\rm$({{\rm iii}})$}] $H_{j}:\mathbb{R}^{d} \times B_{1}$ is continuous and there exist constants $\mathcal{K}_{j},\mathcal{M}_{j}>0$ such
		that
		\begin{equation}\label{hjtiaojian}
			|H_{j}(t,x)|\leq \mathcal{K}_{j}+\mathcal{M}_{j}|t|^{m} \quad  {\rm for\; every\;}(t,x) \in\mathbb{R}^{d}\times B_{1};
		\end{equation}
	%with $\mathcal{K}_{j},\mathcal{M}_{j}\leq \frac{1}{j}$;
		\item [ {\rm$({{\rm iv}})$}] and
		\begin{equation}\label{fanzhengsmall}
			\max\left\{ \|{\rm osc}_{{F_{j}}}\|_{L^{\infty}\left(B_{1}\right)},\|f_{j}\|_{L^{\infty}(B_{1})}, \mathcal{K}_{j},\mathcal{M}_{j}\left(\abs{\xi_{j}}^{(m-i(\Phi))_{+}}+1\right) \right\}\leq \frac{1}{j}.
		\end{equation}	
		%$f_{j}\in C({B_{1}})$ with $\|f_{j}\|_{L^{\infty}(B_{1})}\leq\frac{1}{j}$;
		%\item [ {\rm$({{\rm v}})$}] $u_{j}\in C({B_{1}})$ with $ \|u_{j}\|_{L^{\infty}(B_{1})}\leq 1$ solves the following equation in the viscosity sense
		%\begin{equation}\label{model333}
		%	\Phi_{j}(\abs{Du_{j}+\xi_{j}}, x)F_{j}(D^2 u_{j}, x)+h_{j}(x)\abs{Du_{j}+\xi_{j}}^{m} =f_{j}(x) \quad  \text{in} \quad B_{1}.
	%	\end{equation}
	\end{itemize}
	Nonetheless, for any $v\in C^{1,\alpha_{0}}\left(B_{3/4}\right)$, it holds
	 \begin{equation*}
		\|u_{j}-v\|_{L^{\infty}(B_{1/2})}>\varepsilon_{0}\quad {\rm for\; any \;} j\in \mathbb{N}.
	\end{equation*}
	
	Since ${F_{j}}$ are uniformly $(\lambda,\Lambda)$-elliptic, they are also Lipschitz continuous in $M$. Thus, it follows from \eqref{fanzhengsmall} and Arzel${\rm \grave{a}}$-Ascoli theorem that there exists some uniformly $(\lambda,\Lambda)$-elliptic
	operator $F_{\infty}$ (with frozen coefficients) such that $F_{j}\rightarrow F_{\infty}$ locally uniformly in $S^{d}$ for all $x\in B_{1}$ fixed, through a subsequence if necessary. In addition, we know from Propositions \ref{jin1} and \ref{jin2} that the sequence $\{u_{j}\}_{j\in\mathbb{N}}\subset C_{loc}^{0,\gamma}(B_{1})$ for some $\gamma\in (0,1)$. Therefore, by applying Arzel${\rm \grave{a}}$-Ascoli theorem again, we conclude that, up to a subsequence,  $u_{j}$ converges locally uniformly in $B_{1}$ to some continuous function $u_{\infty}$ in the $C^{0}$ topology. Particularly, it holds that
	\begin{equation*}
		u_{\infty}\in C({B_{3/4}}) \quad {\rm and}\quad  \|u_{\infty}\|_{L^{\infty}(B_{3/4})}\leq 1,
	\end{equation*}
but
\begin{equation}\label{keymaodun}
	\sup\limits_{x\in B_{1/2}}\;\Abs{u_{\infty}(x)-v(x)}>\varepsilon_{0}.
\end{equation}

In the sequel, our purpose is to verify that the limiting function $u_{\infty}$ is a viscosity solution to the homogeneous equation
	 \begin{equation}\label{homojie}
		F_{\infty}(D^{2}u_{\infty})=0 \quad {\rm in}\quad  B_{3/4}.
	\end{equation}
	For this end, we initially prove that $u_{\infty}$ is a viscosity supersolution. Let $\varphi$ be any test function touching $u_{\infty}$ from below at a point $\overline{x}\in B_{3/4}$, that is,
	$$\varphi(\overline{x})=u_{\infty}(\overline{x})\quad {\rm and}\quad \varphi(x)<u_{\infty}(x)\quad {\rm for\; all}\;\;x\neq \overline{x}.$$
	Without loss of generality, we assume that $\abs{\overline{x}}=u_{\infty}(\overline{x}) = 0$ and $\varphi$ is a quadratic polynomial, namely,
	$$\varphi(x)=\frac{1}{2}\left\langle Mx,x\right\rangle+\left\langle b,x\right\rangle.$$
	Since $u_{j}\rightarrow u_{\infty}$ locally uniformly in $B_{1}$, we see that, for $j$ sufficiently large, the polynomial
	$$\varphi_{j}(x):=\frac{1}{2}\left\langle M(x-x_{j}),x-x_{j}\right\rangle+\left\langle b,x-x_{j}\right\rangle+u_{j}(x_{j})$$
	touches $u_{j}$ from below at $x_{j}$ belonging to a small neighbourhood of zero. Since $u_{j}$ is a viscosity solution of \eqref{model333}, we immediately obtain that
	\begin{equation}\label{sj}
		\Phi_{j}(\abs{b+\xi_{j}}, x_{j})F_{j}(M, x_{j})+
		H(b+\xi_{j},x_{j}) \leq f_{j}(x_{j}).
	\end{equation}
	In what follows, for ease of presentation, we divide the proof into two steps according to the boundedness of the sequence $\{\xi_{j}\}_{j\in\mathbb{N}}$.\\
{\bf Step 1.} If sequence $\{\xi_{j}\}_{j\in \mathbb{N}}$ is unbounded, then we may assume $\abs{\xi_{j}}\rightarrow {\infty}$ as $j\rightarrow\infty$ (up to a subsequence). As a result, there exists $j^{\star}\in\mathbb{N}$ so large that $\abs{\xi_{j}}\geq2\max\{1,\abs{b}\}$ for all $j\geq j^{\star}$. By the triangle inequality, we get
\begin{equation}\label{Step1xiajie}
	\abs{b+\xi_{j}}\geq \abs{\xi_{j}}-|b|\geq \frac{1}{2}\abs{\xi_{j}}\geq 1.
\end{equation}
Combining \eqref{Step1xiajie} with the assumption (ii), \eqref{fanzhengsmall} and $i(\Phi)\geq 0$ yields that
\begin{equation}\label{f1}
	\frac{f_{j}(x_{j})}{\Phi_{j}(\abs{b+\xi_{j}}, x_{j})}\leq  \frac{L\|f_{j}\|_{L^{\infty}(B_{1})}}{\abs{b+\xi_{j}}^{i(\Phi)}}\leq \frac{L2^{i(\Phi)}}{j\abs{\xi_{j}}^{i(\Phi)}}.
\end{equation}
With the help of the assumption (ii) of $\Phi_{j}$ again and the following basic inequality
$$(a_{1}+a_{2})^{m}\leq 2^{m}(a_{1}^{m}+a_{2}^{m}) \quad {\rm for\;all} \;\,a_{1},a_{2}\geq 0,\;m>0,$$
in combination with \eqref{hjtiaojian}, \eqref{fanzhengsmall} and $|\xi_{j}|\geq 2|b|$, we arrive at
\begin{equation*}
	\begin{split}
	\frac{H_{j}\left(b+\xi_{j},x_{j}\right)}{\Phi_{j}(\abs{b+\xi_{j}}, x_{j})}&\leq  \frac{L\left(\mathcal{K}_{j}+\mathcal{M}_{j}	\abs{b+\xi_{j}}^{m}\right)}{\abs{b+\xi_{j}}^{i(\Phi)}}\leq \frac{L2^{i(\Phi)}}{j\abs{\xi_{j}}^{i(\Phi)}}+
	\frac{L2^{i(\Phi)}\mathcal{M}_{j}\abs{b+\xi_{j}}^{m}}{\abs{\xi_{j}}^{i(\Phi)}}\\
	&\leq \frac{L2^{i(\Phi)}}{j\abs{\xi_{j}}^{i(\Phi)}}+L2^{m+i(\Phi)}\frac{\abs{\xi_{j}}^{m-i(\Phi)}}{j\left(\abs{\xi_{j}}^{(m-i(\Phi))_{+}}+1\right)}\left(\Abs{\frac{b}{\xi_{j}}}^{m}+1\right)\\
	&\leq \frac{L2^{i(\Phi)}}{j\abs{\xi_{j}}^{i(\Phi)}}+L2^{m+1+i(\Phi)}\frac{\abs{\xi_{j}}^{m-i(\Phi)}}{j\left(\abs{\xi_{j}}^{(m-i(\Phi))_{+}}+1\right)}.
	\end{split}
\end{equation*}
It follows from $|\xi_{j}|\geq 2$ that
\begin{equation}\label{h1}
\frac{H_{j}\left(b+\xi_{j},x_{j}\right)}{\Phi_{j}(\abs{b+\xi_{j}}, x_{j})}\leq \left\{
\begin{array}{lcl}
	\frac{L2^{i(\Phi)}}{j\abs{\xi_{j}}^{i(\Phi)}}+\frac{L2^{m+i(\Phi)}}{j \abs{\xi_{j}}^{i(\Phi)-m}}& \text{if} &0< m\leq i(\Phi), \\
 \frac{L2^{i(\Phi)}}{j\abs{\xi_{j}}^{i(\Phi)}}+\frac{L2^{m+i(\Phi)}}{j} &\text{if}& i(\Phi)<m\leq 1+ i(\Phi).
\end{array}
\right.
\end{equation}
A combination of \eqref{sj} with \eqref{f1} and \eqref{h1} leads to that
\begin{equation*}
	\begin{split}
		F_{\infty}(M)=\lim\limits_{j\rightarrow \infty}F_{j}(M,x_{j})\leq& \lim\limits_{j\rightarrow \infty}\left(\frac{f_{j}(x_{j})}{\Phi_{j}(\abs{b+\xi_{j}}, x_{j})}- \frac{H_{j}\left(b+\xi_{j},x_{j}\right)}{\Phi_{j}(\abs{b+\xi_{j}}, x_{j})}\right)\\
		\leq &\lim\limits_{j\rightarrow \infty}\left(\Abs{\frac{f_{j}(x_{j})}{\Phi_{j}(\abs{b+\xi_{j}}, x_{j})}}+ \Abs{\frac{H_{j}\left(b+\xi_{j},x_{j}\right)}{\Phi_{j}(\abs{b+\xi_{j}}, x_{j})}}\right)= 0.
	\end{split}
\end{equation*}
{\bf Step 2.} If sequence $\{\xi_{j}\}_{j\in\mathbb{R}^{n}}$ is bounded, then we may assume  $\xi_{j}\rightarrow \xi_{\infty}$ as $j\rightarrow \infty$ (up to a subsequence). As a
consequence, $\xi_{j}+b\rightarrow \xi_{\infty}+b \; {\rm as}\; j\rightarrow \infty.$ At this point, we consider two cases: $\abs{b+\xi_{\infty}}\neq 0$ or $\abs{b+\xi_{\infty}}= 0$. \\
{\bf Case 1.} $\abs{b+\xi_{\infty}}\neq 0$. Note that $\Abs{\xi_{j}+b}\geq \frac{1}{2}\Abs{\xi_{\infty}+b}$ for $j$ large enough.
Thereby, applying the assumption (ii) of $\Phi_{j}$, $i(\Phi)\geq 0$, and the aforementioned inequality, for $j$ sufficiently large, we have
\begin{equation*}
	\begin{cases}
			\Phi_{j}(\abs{b+\xi_{j}}, x_{j})\geq L^{-1}\abs{b+\xi_{j}}^{i(\Phi)}\geq 2^{-i(\Phi)}L^{-1}\abs{b+\xi_{\infty}}^{i(\Phi)} & \text{if}\;\; \Abs{\xi_{\infty}+b}\geq 1, \\
		\Phi_{j}(\abs{b+\xi_{j}}, x_{j})\geq L^{-1}\abs{b+\xi_{j}}^{s(\Phi)}\geq 2^{-s(\Phi)}L^{-1}\abs{b+\xi_{\infty}}^{s(\Phi)} & \text{if}\;\; \Abs{\xi_{\infty}+b}<1.
	\end{cases}
\end{equation*}
Taking into account the aforementioned estimates, and combining with the conditions (ii)-(iv) and $0\leq i(\Phi)\leq s(\Phi)$, we deduce that
\begin{equation}\label{f2}
	\frac{f_{j}(x_{j})}{\Phi_{j}(\abs{b+\xi_{j}}, x_{j})}\leq \frac{2^{s(\Phi)}L}{j\min\left\{\abs{b+\xi_{\infty}}^{i(\Phi)},\abs{b+\xi_{\infty}}^{s(\Phi)}\right\}},
\end{equation}
\begin{equation}\label{h2}
	\begin{split}
		\frac{H_{j}\left(b+\xi_{j},x_{j}\right)}{\Phi_{j}(\abs{b+\xi_{j}}, x_{j})}&\leq \frac{2^{s(\Phi)}L}{j\min\left\{\abs{b+\xi_{\infty}}^{i(\Phi)},\abs{b+\xi_{\infty}}^{s(\Phi)}\right\}}\bigg(1+ \frac{\abs{b+\xi_{j}}^{m}}{\abs{\xi_{j}}^{(m-i(\Phi))_{+}}+1}\bigg)
		\\
		&\leq \frac{2^{s(\Phi)}L\left(1+\abs{b+\xi_{j}}^{m}\right)}{j\min\left\{\abs{b+\xi_{\infty}}^{i(\Phi)},\abs{b+\xi_{\infty}}^{s(\Phi)}\right\}}.
	\end{split}
\end{equation}
Combining \eqref{sj} with \eqref{f2} and \eqref{h2}, we arrive at
\begin{equation*}
		F_{\infty}(M)=\lim\limits_{j\rightarrow \infty}F_{j}(M,x_{j})\leq\lim\limits_{j\rightarrow \infty}\left(\Abs{\frac{f_{j}(x_{j})}{\Phi_{j}(\abs{b+\xi_{j}}, x_{j})}}+ \Abs{\frac{H_{j}\left(b+\xi_{j},x_{j}\right)}{\Phi_{j}(\abs{b+\xi_{j}}, x_{j})}}\right)= 0.
\end{equation*}
{\bf Case 2.} $\Abs{b+\xi_{\infty}}= 0$. By contradiction, let us assume that
\begin{equation}\label{contracdiction}
	F_{\infty}(M)>0.
\end{equation}
The ellipticity condition of $F_{\infty}$ implies that matrix $M$ has at least one positive eigenvalue. Let $\rn=T\oplus Q$ be the orthogonal sum, where $T:={\rm span}\{e_{1},e_{2},...,e_{k}\}$ is the invariant space composed of those eigenvectors corresponding to
positive eigenvalues of $M$, and $Q:=\{y\in \rn:\left\langle y,\eta  \right\rangle=0\;{\rm for\;all}\;\eta\in T\}$. Since $\Abs{\xi_{\infty}+b}=0$, two situations
can occur: $b=-\xi_{\infty}$ with $|b|,\abs{\xi_{\infty}}>0$ or $|b|=\abs{\xi_{\infty}}=0$.

{\bf Case 2.1.} $b=-\xi_{\infty}$ with $|b|,\abs{\xi_{\infty}}>0$. Let $\gamma>0$ and
\begin{equation}\label{model3}
	\varphi_{\gamma}(x):=\varphi(x)+\gamma \Abs{P_{T}(x)}=\frac{1}{2}\left\langle Mx,x\right\rangle+\left\langle b,x\right\rangle+\gamma \Abs{P_{T}(x)},
\end{equation}
where $P_{T}$ stands for the orthogonal projection over $T$. Since $u_{j}\rightarrow u_{\infty}$ locally uniformly in $B_{1}$ and $\varphi$ touches $u$ from below at the origin, then, for $\gamma$ small enough, $\varphi_{\gamma}$ touches
$u_{j}$ from below at a point $x_{j}^{\gamma}$ belonging to a small neighbourhood of 0. Moreover, there holds that, up to a subsequence, $x_{j}^{\gamma}\rightarrow x_{*}$ for some $x_{*}\in B_{3/4}$ as $j\rightarrow\infty$. At this point we examine two scenarios: $P_{T}\left(x_{j}^{\gamma}\right)=0$ and $P_{T}\left(x_{j}^{\gamma}\right)\neq 0$.

First, we consider $P_{T}\left(x_{j}^{\gamma}\right)=0$.
Notice that
\begin{equation*}
	\tilde{\varphi}_{\gamma}(x):=\varphi(x)+\gamma \left\langle e,P_{T}(x)\right\rangle
\end{equation*}
touches $u_{j}$ from below at $x_{j}^{\gamma}$ for every $e\in \mathbb{S}^{d-1}$ (i.e., $|e|=1$). Through a direct calculation, we derive
\begin{equation*}
	D\tilde{\varphi}_{\gamma}(x_{j}^{\gamma})=Mx_{j}^{\gamma}+b+\gamma P_{T}(e),\quad D^{2}\tilde{\varphi}_{\gamma}(x_{j}^{\gamma})=M.
\end{equation*}
We choose $e\in T\cap \mathbb{S}^{d-1}$ such that $P_{T}(e)=e$. Then by means of viscosity inequality \eqref{model333}, we get
\begin{equation}\label{model8}
	\Phi_{j}(\abs{Mx_{j}^{\gamma}+b+\gamma e+\xi_{j}}, x_{j}^{\gamma})F_{j}(M, x_{j}^{\gamma})+H_{j}\left(Mx_{j}^{\gamma}+b+\gamma e+\xi_{j},x_{j}^{\gamma}\right) \leq f_{j}(x_{j}^{\gamma}).
\end{equation}
If $Mx_{*}=0$, then for $j$ sufficiently large, we have
\begin{equation*}
	\Abs{Mx_{j}^{\gamma}+b+\xi_{j}}\leq \frac{\gamma}{2}.
\end{equation*}
This along with the triangle inequality yields
\begin{equation}\label{xiaoyu1}
	\frac{\gamma}{2}\leq \Abs{Mx_{j}^{\gamma}+b+\gamma e+\xi_{j}}\leq  \frac{3\gamma}{2}.
\end{equation}
By analogy with the estimates of \eqref{f2} and \eqref{h2}, we can deduce that
\begin{equation*}
		\frac{f_{j}(x_{j}^{\gamma})}{\Phi_{j}(\abs{Mx_{j}^{\gamma}+b+\gamma e+\xi_{j}}, x_{j}^{\gamma})}\leq \frac{2^{s(\Phi)}L}{j\gamma^{s(\Phi)}},
\end{equation*}
\begin{equation*}
	\frac{H_{j}\left(Mx_{j}^{\gamma}+b+\gamma e+\xi_{j},x_{j}^{\gamma}\right)}{\Phi_{j}(\abs{Mx_{j}^{\gamma}+b+\gamma e+\xi_{j}}, x_{j}^{\gamma})}\leq
		\frac{2^{s(\Phi)}L}{j\gamma^{s(\Phi)}}+\frac{2^{s(\Phi)-m}L(3\gamma)^{m}}{j\gamma^{s(\Phi)}}.
\end{equation*}
We further get
\begin{equation*}
		F_{\infty}(M)=\lim\limits_{j\rightarrow \infty}F_{j}(M,x_{j}^{\gamma})\leq \lim\limits_{j\rightarrow \infty}\frac{\abs{f_{j}(x_{j}^{\gamma})}+\Abs{H_{j}\left(Mx_{j}^{\gamma}+b+\gamma e+\xi_{j},x_{j}^{\gamma}\right)}}{\Phi_{j}(\abs{Mx_{j}^{\gamma}+b+\gamma e+\xi_{j}}, x_{j}^{\gamma})}= 0.
\end{equation*}
On the other hand, if $\Abs{Mx_{*}}>0$, we start off by considering the case in which $T\equiv \rn$ and select $e\in \mathbb{S}^{d-1}$ such that
\begin{equation*}
	\Abs{Mx_{*}+\gamma P_{T}(e)}=\Abs{Mx_{*}+\gamma e}>0.
\end{equation*}
For $j$ large enough, we have
\begin{equation}\label{case1}
	\Abs{Mx_{j}^{\gamma}+\gamma e}\geq \frac{1}{2}\Abs{Mx_{*}+\gamma e}\quad {\rm and}\quad \abs{\xi_{j}+b}\leq \frac{1}{8}\Abs{Mx_{*}+\gamma e}.
\end{equation}
Furthermore, if $T\neq \rn$, then we choose $e\in \mathbb{S}^{d-1}\cap T^{\perp}$ such that
\begin{equation*}
	\Abs{Mx_{*}+\gamma P_{T}(e)}=\Abs{Mx_{*}}>0.
\end{equation*}
Again for $j$ large enough, we have
\begin{equation}\label{case2}
	\Abs{Mx_{j}^{\gamma}}\geq \frac{1}{2}\Abs{Mx_{*}}\quad {\rm and}\quad \abs{\xi_{j}+b}\leq \frac{1}{8}\Abs{Mx_{*}}.
\end{equation}
Thus, using either \eqref{case1} or \eqref{case2}, we arrive at
\begin{equation*}
	\Abs{Mx_{j}^{\gamma}+b+\gamma P_{T}(e)+\xi_{j}}>\frac{1}{4}\Abs{Mx_{*}+\gamma P_{T}(e)}>0.
\end{equation*}
By the same arguments as before, we can also conclude that $F_{\infty}(M)\leq 0$.

Next, let us consider $P_{T}\left(x_{j}^{\gamma}\right)\neq 0$. Note that $\Abs{P_{T}(x)}$ is smooth and convex in a small neighbourhood of $x_{j}^{\gamma}$. Because of $P_{T}$ being a projection, then
\begin{equation}\label{feifuding}
\Abs{P_{T}(x)}D\left(\Abs{P_{T}(x)}\right)=P_{T}(x)\quad {\rm and} \quad D^{2}\left(\Abs{P_{T}(x)}\right)\;\,{\rm is \;nonnegative\; definite}.
\end{equation}
Hence, we arrive at
\begin{equation*}
		\Phi_{j}(\abs{Mx_{j}^{\gamma}+b+\gamma \zeta_{j}^{\gamma}+\xi_{j}}, x_{j}^{\gamma})F_{j}(M+\gamma D^{2}\left(\Abs{P_{T}(x_{j}^{\gamma})}\right), x_{j}^{\gamma})
		+H_{j}\left(Mx_{j}^{\gamma}+b+\gamma \zeta_{j}^{\gamma}+\xi_{j},x_{j}^{\gamma}\right) \leq f_{j}(x_{j}^{\gamma}).
\end{equation*}
where 	$\zeta_{j}^{\gamma}:=\frac{{P_{T}(x_{j}^{\gamma})}}{\Abs{P_{T}(x_{j}^{\gamma})}}$.
Observe that $\Abs{\zeta_{j}^{\gamma}}=1$. Set $e:=\zeta_{j}^{\gamma}$, we can perform the same procedure as in the case $P_{T}\left(x_{j}^{\gamma}\right)= 0$ via distinguishing $Mx_{*}=0$ and $Mx_{*}\neq 0$. Then we can conclude that
$$F_{\infty}(M+\gamma D^{2}\left(\Abs{P_{T}(x_{*})}\right))\leq 0.$$
By virtue of \eqref{feifuding} and the ellipticity condition on $F_{\infty}$, we derive $$F_{\infty}(M)\leq F_{\infty}(M+\gamma D^{2}\left(\Abs{P_{T}(x_{*})}\right))\leq0,$$ which contradicts the assumption \eqref{contracdiction}.

{\bf Case 2.2.} $b=\xi_{\infty}=0$. In this case, the procedures become easier. Since $\frac{1}{2}\left\langle Mx,x\right\rangle$ touches $u_{\infty}$
from below at the origin and $u_{j}\rightarrow u_{\infty}$ locally uniformly, then the test function
$$\hat{\varphi}(x):=\frac{1}{2}\left\langle Mx,x\right\rangle+\gamma \Abs{P_{T}(x)}$$
touches $u_{j}$ from below at a point $\hat{x}_{j}^{\gamma}\in B_{r}$ for $\gamma>0$ sufficiently small. Likewise, we will analyze two scenarios: $\Abs{P_{T}(\hat{x}_{j}^{\gamma})}=0$ and $\Abs{P_{T}(\hat{x}_{j}^{\gamma})}\neq 0$, which is in a similar manner to Case 2.1. Eventually, we also conclude $F_{\infty}(M)\leq 0$.

At this stage, we have shown
that $u_{\infty}$ is a viscosity supersolution of \eqref{homojie}. In order
to verify that $u_{\infty}$ is a viscosity subsolution of \eqref{homojie}, it suffices to show that $-u_{\infty}$ is a supersolution to $\hat{F}_{\infty}(D^{2}w)=0$, where $\hat{F}_{\infty}(M):=-{F}_{\infty}(-M)$ is uniformly $(\lambda,\Lambda)$-elliptic as well. It now follows from the well-known regularity results in \cite[
Chapter 5]{Caff1} that $u_{\infty}\in C_{loc}^{1,\alpha_{0}}(B_{3/4})$ for some $\alpha_{0}\in(0,1)$ and that $\|u_{\infty}\|_{C^{1,\alpha_{0}}\left(B_{1/2}\right)}\leq C(d,\lambda,\Lambda)$. Finally, taking $v=u_{\infty}$, we reach a contradiction with \eqref{keymaodun}. This completes the proof of the desired result.
\end{proof}
%%%%%%%%%%%%%%%%%%%%%%%%%%%%%%%%%%%%%%%%%%%%%%%%%%%%%%%%%%%%%%%%%%%%%%%%%%%%%%%%%%%%%%%%%%%%%%%%%%%%%%%%
\section{Proof of Theorem \ref{main}}\label{section5}
In this section, we present the core sharp oscillation decay result, which ultimately leads to the optimal $C^{1,\alpha}$ regularity estimate for solutions to \eqref{model}. The following lemma provides the first geometric iteration, which follows immediately from the
approximation lemma.
\begin{lemma}\label{diedai1}
	Suppose that the hypotheses \eqref{A1}-\eqref{A5} hold true with $i(\Phi)\geq 0$ and
	$\nu_{0}=\nu_{1}=1$. %and $0<m\leq 1+i(\Phi)$. 
	Let $\xi\in \rn$ be an arbitrarily vector and  $u\in C(B_{1})$ be a normalized viscosity solution to \eqref{model111}. Given
	\begin{equation}\label{zhibiao}
		\alpha\in (0,\alpha_{0})\cap \left(0,\frac{1}{1+s(\Phi)}\right],
	\end{equation}
	there exists constant $\delta>0$ depending
	on $d,\lambda,\Lambda,\alpha,L,m$ and $i(\Phi)$, such that if
	\begin{equation*}
		\max\left\{\|{\rm osc}_{{F}}\|_{L^{\infty}\left(B_{1}\right)},\|f\|_{L^{\infty}(B_{1})},\mathcal{K},\mathcal{M}\left(\abs{\xi}^{(m-i(\Phi))_{+}}+1\right)\right\}\leq \delta,
	\end{equation*}
	then there exist constant $0<\rho<\frac{1}{2}$ depending on $d,\lambda,\Lambda$ and $\alpha$, and an affine function $l(x)=a+b\cdot x$ $(a\in\mathbb{R},b\in\rn)$ such that
	\begin{equation*}
		\|u-l\|_{L^{\infty}(B_{\rho})}\leq \rho^{1+\alpha}
	\end{equation*}
	and
	\begin{equation*}
		|a|+|b|\leq C(d,\lambda,\Lambda).
	\end{equation*}
\end{lemma}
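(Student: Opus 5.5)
The plan is the standard first step of the geometric tangential method: combine the Approximation Lemma \ref{bijin} with the $C^{1,\alpha_0}$ regularity of the limiting homogeneous profile. Fix $\alpha$ as in \eqref{zhibiao}. Since $\alpha<\alpha_0$, we can first choose the radius $\rho$ and only afterwards the tolerance $\varepsilon$, and hence $\delta$.

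\emph{Step 1 (the tangential profile).} Lemma \ref{bijin} produces, for any $\varepsilon>0$, a function $v\in C^{1,\alpha_0}(B_{3/4})$ with $\|v\|_{C^{1,\alpha_0}(B_{3/4})}\leq C_0$, where $C_0=C_0(d,\lambda,\Lambda)$. Set
\[
l(x):=v(0)+Dv(0)\cdot x,\qquad a:=v(0),\qquad b:=Dv(0).
\]
Then $|a|+|b|\leq C_0$. By Taylor expansion and the $C^{1,\alpha_0}$ bound,
\[
\sup_{B_\rho}|v-l|\leq C_0\rho^{1+\alpha_0}\quad\text{for all }\rho\in(0,1/2).
\]

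\emph{Step 2 (choice of $\rho$).} Choose $\rho\in(0,1/2)$ so small that $C_0\rho^{1+\alpha_0}\leq \frac12\rho^{1+\alpha}$, that is,
\[
\rho:=\min\Bigl\{\tfrac14,\ (2C_0)^{-1/(\alpha_0-\alpha)}\Bigr\}.
\]
This is possible exactly because $\alpha<\alpha_0$, and $\rho$ depends only on $d,\lambda,\Lambda,\alpha$.

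\emph{Step 3 (choice of $\varepsilon$ and $\delta$).} Take $\varepsilon:=\frac12\rho^{1+\alpha}$ and let $\delta:=\sigma(\varepsilon)$ be the constant from Lemma \ref{bijin}. Since $\rho$ depends only on $d,\lambda,\Lambda,\alpha$ and $\sigma$ depends on $d,\lambda,\Lambda,m,i(\Phi),L,\varepsilon$, the constant $\delta$ has the claimed dependence. Under the smallness hypothesis with this $\delta$, Lemma \ref{bijin} gives $\|u-v\|_{L^\infty(B_{1/2})}\leq\varepsilon$. The triangle inequality on $B_\rho\subset B_{1/2}$ then yields
\[
\|u-l\|_{L^\infty(B_\rho)}\leq \varepsilon+\tfrac12\rho^{1+\alpha}=\rho^{1+\alpha}.
\]

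There is no genuine obstacle, since all the analytic difficulty has been absorbed into Lemma \ref{bijin}. The only point needing care is the order of the choices: $\rho$ first (from $C_0$ and $\alpha_0-\alpha$), then $\varepsilon$, then $\delta$. This order keeps $\delta$ and $\rho$ free of any dependence on $u$ or on $\xi$.

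The restriction $\alpha\leq\frac1{1+s(\Phi)}$ plays no role in this first step. It will be used later, in the rescaling iteration: the rescaled functions $(u(\rho x)-l(\rho x))/\rho^{1+\alpha}$ must again satisfy the smallness conditions, including the condition on $\mathcal{M}(|\xi|^{(m-i(\Phi))_+}+1)$ with the shifted vector $\xi+b$.
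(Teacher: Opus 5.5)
Your proof is correct and is essentially the paper's argument: take $v$ from Lemma \ref{bijin}, let $l$ be its first-order Taylor polynomial at $0$, fix $\rho$ with $C\rho^{\alpha_0-\alpha}\le\frac12$, then set $\varepsilon=\frac12\rho^{1+\alpha}$ and let $\delta$ be the corresponding $\sigma$; this is the same order of choices the paper uses. Your closing aside is slightly imprecise, since after rescaling the new shift vector is $\rho^{-\alpha}(\xi+b)$ rather than $\xi+b$ (which is why the paper tracks $\xi_k=\rho^{-k\alpha}b_k$), but this has no bearing on the present lemma.
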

\begin{proof}
For $\varepsilon>0$ to be fixed a posteriori, let $v$ be the approximate function from Lemma \ref{bijin}, which is $\varepsilon$-close to $u$ in $L^{\infty}(B_{1/2})$. Remember that Lemma \ref{bijin} guarantees the existence of such a function $v$, provided that $\delta>0$ is small enough.

Now, according to the $C^{1,\alpha_{0}}$-regularity of $v$, we have
\begin{equation}\label{zhengzexing}
	\sup\limits_{x\in B_{\rho}}\Abs{v(x)-\left(v(0)+Dv(0)\cdot x\right)}\leq C\rho^{1+\alpha_{0}}\quad {\rm for \;all\;} \rho\in\left(0,\frac{1}{2}\right),
\end{equation}
	\begin{equation*}
	|v(0)|+|Dv(0)|\leq C,
\end{equation*}
where constants $\alpha_{0}\in(0,1)$ and $C>0$ depend only on $d,\lambda,\Lambda$. At this point, let us denote
	\begin{equation*}
		l(x):=a+b\cdot x:=v(0)+Dv(0)\cdot x.
	\end{equation*}
To proceed, a combination of the triangle inequality  with Lemma \ref{bijin} and \eqref{zhengzexing} yields that
\begin{equation}\label{5.1}
	\sup\limits_{x\in B_{\rho}}\Abs{u(x)-l(x)}\leq \sup\limits_{x\in B_{\rho}}\Abs{u(x)-v(x)}+\sup\limits_{x\in B_{\rho}}\Abs{v(x)-l(x)}\leq \varepsilon+C\rho^{1+\alpha_{0}}.
\end{equation}	
	In light of $\alpha<\alpha_{0}$, we choose $0<\rho\ll 1$ such that
	\begin{equation}\label{5.2}
		C\rho^{\alpha_{0}-\alpha}\leq \frac{1}{2}.%\Leftrightarrow\rho\leq \left(\frac{1}{2C}\right)^{\frac{1}{\alpha_{0}-\alpha}}.
	\end{equation}
Also, for the previous selected radius, we fix
	\begin{equation}\label{5.3}
	\varepsilon:=\frac{1}{2}\rho^{1+\alpha}.
\end{equation}	
Finally, combining with \eqref{5.1}-\eqref{5.3}, we obtain
\begin{equation*}
	\|u-l\|_{L^{\infty}(B_{\rho})}\leq \rho^{1+\alpha}.
\end{equation*}
This completes the proof of the desired result.
\end{proof}
Next, we iterate the previous result in a sequence of concentric, shrinking balls.
\begin{lemma}[\bf Geometric iterations]\label{diedai2}
			Suppose that the assumptions \eqref{A1}-\eqref{A5} hold true with $i(\Phi)\geq 0$ and 
			$\nu_{0}=\nu_{1}=1$. % and $0<m\leq 1+i(\Phi)$. 
			 Let $u\in C(B_{1})$ be a normalized viscosity solution to \eqref{model}. Given $\alpha$ as in \eqref{zhibiao}, there are constants $\rho\in(0,\frac{1}{2})$ and $\delta>0$, both of which are the same as those in Lemma \ref{diedai1}, such that if
		\begin{equation}\label{5.6}
			\max\left\{\|{\rm osc}_{{F}}\|_{L^{\infty}\left(B_{1}\right)},\|f\|_{L^{\infty}(B_{1})},\mathcal{K},\mathcal{M}\right\} \leq \delta,
		\end{equation} then for any $j\in\mathbb{N}$, there exists a sequence of affine functions $\{l_{j}(x)\}_{j\in\mathbb{N}}$, where   $l_{j}(x)=a_{j}+b_{j}\cdot x$ $(a_{j}\in\mathbb{R},b_{j}\in\rn)$, fulfilling
	\begin{equation}\label{guina1}
		\|u-l_{j}\|_{L^{\infty}(B_{\rho^{j}})}\leq \rho^{j(1+\alpha)}
	\end{equation}
	and
	\begin{equation}\label{guina2}
		|a_{j}-a_{j-1}|+\rho^{j-1}|b_{j}-b_{j-1}|\leq C\rho^{(j-1)(1+\alpha)}.
	\end{equation}
where the constant $C$ depends only on $d,\lambda,\Lambda$.
\end{lemma}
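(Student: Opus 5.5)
We argue by induction on $j$, with $l_0\equiv 0$ (so $a_0=0$, $b_0=0$). The case $j=1$ is exactly Lemma \ref{diedai1} applied with $\xi=0$: the hypothesis there reduces to $\max\{\|{\rm osc}_F\|_{L^\infty(B_1)},\|f\|_{L^\infty(B_1)},\mathcal{K},\mathcal{M}\}\le\delta$, which is \eqref{5.6}. It yields $l_1=a_1+b_1\cdot x$ with $\|u-l_1\|_{L^\infty(B_\rho)}\le\rho^{1+\alpha}$ and $|a_1|+|b_1|\le C(d,\lambda,\Lambda)$, which is \eqref{guina2} for $j=1$.

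Assume \eqref{guina1} holds for some $j\ge1$. Define the rescaled function
\[
u_j(x):=\frac{u(\rho^j x)-l_j(\rho^j x)}{\rho^{j(1+\alpha)}},\qquad x\in B_1 .
\]
By the induction hypothesis $\|u_j\|_{L^\infty(B_1)}\le1$. A direct computation shows that $u_j$ solves an equation of the form \eqref{model111} with vector $\xi_j:=\rho^{-j\alpha}b_j$. The rescaled data are
\begin{align*}
F_j(M,x)&:=\rho^{j(1-\alpha)}F\bigl(\rho^{j(\alpha-1)}M,\rho^jx\bigr),\\
\Phi_j(t,x)&:=\frac{\Phi(\rho^{j\alpha}t,\rho^jx)}{\Phi(\rho^{j\alpha},\rho^jx)},\\
H_j(t,x)&:=\frac{\rho^{j(1-\alpha)}}{\Phi(\rho^{j\alpha},\rho^jx)}\,H(\rho^{j\alpha}t,\rho^jx),\\
f_j(x)&:=\frac{\rho^{j(1-\alpha)}}{\Phi(\rho^{j\alpha},\rho^jx)}\,f(\rho^jx).
\end{align*}
The operator $F_j$ is $(\lambda,\Lambda)$-elliptic with ${\rm osc}_{F_j}(x)={\rm osc}_F(\rho^jx)\le\delta$. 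The function $\Phi_j$ keeps the same almost-monotonicity constants $L$, $i(\Phi)$, $s(\Phi)$, and satisfies $\Phi_j(1,x)=1$.

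Next we check the smallness of $f_j$ and $H_j$. Since $\rho^{j\alpha}\le1$, the almost non-increasing property of $\Phi/t^{s(\Phi)}$ gives $\Phi(\rho^{j\alpha},\cdot)\ge L^{-1}\rho^{j\alpha s(\Phi)}$. Hence
\[
\|f_j\|_{L^\infty}\le L\rho^{j(1-\alpha-\alpha s(\Phi))}\delta\le L\delta,
\]
precisely because $\alpha\le\frac1{1+s(\Phi)}$. This is where the sharp exponent enters. Similarly $\mathcal{K}_j\le L\delta$. The growth coefficient becomes $\mathcal{M}_j\le L\rho^{j(1-\alpha-\alpha s(\Phi)+\alpha m)}\mathcal{M}\le L\rho^{j\alpha m}\delta$.

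The main obstacle is the $\xi$-dependent smallness condition $\mathcal{M}_j(|\xi_j|^{(m-i(\Phi))_+}+1)\le\delta$ of Lemma \ref{diedai1}, because $|\xi_j|=\rho^{-j\alpha}|b_j|$ may blow up. By the inductive bound \eqref{guina2}, summed as a geometric series, $|b_j|\le C\sum_k\rho^{k\alpha}\le C'$. Using $m-i(\Phi)\le m$, we then get
\[
\mathcal{M}_j|\xi_j|^{(m-i(\Phi))_+}\le LC'\delta\,\rho^{j\alpha m-j\alpha(m-i(\Phi))_+}\le LC'\delta .
\]
The factor $\rho^{j\alpha m}$ exactly absorbs the blow-up of $|\xi_j|$. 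The constants $L$ and $C'$ are harmless: we run Lemma \ref{diedai1} with threshold $\delta_0$ and impose $\delta:=\delta_0/(2LC')$ in \eqref{5.6}. We must also make sure that $\rho$ and $C$ from Lemma \ref{diedai1} depend only on $d,\lambda,\Lambda,\alpha$ and not on $j$. This holds because the structural constants of $(F_j,\Phi_j,H_j)$ are unchanged.

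Applying Lemma \ref{diedai1} to $u_j$ gives $\tilde l(x)=\tilde a+\tilde b\cdot x$ with $|\tilde a|+|\tilde b|\le C$ and $\|u_j-\tilde l\|_{L^\infty(B_\rho)}\le\rho^{1+\alpha}$. Set
\[
a_{j+1}:=a_j+\rho^{j(1+\alpha)}\tilde a,\qquad b_{j+1}:=b_j+\rho^{j\alpha}\tilde b .
\]
Scaling back gives $\|u-l_{j+1}\|_{L^\infty(B_{\rho^{j+1}})}\le\rho^{(j+1)(1+\alpha)}$, and also
\[
|a_{j+1}-a_j|+\rho^j|b_{j+1}-b_j|\le C\rho^{j(1+\alpha)} .
\]
This closes the induction.
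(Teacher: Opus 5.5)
Your proposal is correct and follows the paper's proof essentially step for step. It uses the same induction from $l_0\equiv 0$ and the same rescaling $u_j$ with $\xi_j=\rho^{-j\alpha}b_j$, and it uses $\alpha\le\frac{1}{1+s(\Phi)}$ to control $f_j$, $\mathcal{K}_j$, $\mathcal{M}_j$; it then absorbs $|\xi_j|^{(m-i(\Phi))_+}$ through the geometric-series bound on $|b_j|$ and a reduced $\delta$, which is also what the paper does despite the statement's ``same $\delta$''. The only differences are cosmetic: you handle $m\le i(\Phi)$ and $m>i(\Phi)$ together via $(m-i(\Phi))_+\le m$, and you keep $|b_j|\le C/(1-\rho^{\alpha})$ instead of the paper's $2C$, which is only valid when $\rho^{\alpha}\le\frac12$.
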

\begin{proof}
The proof follows from an induction argument. Let $a_{0}=0$, $b_{0}=0$, and $l_{1}$ be given by Lemma \ref{diedai1}. Then the claim immediately holds for $j=1$ by Lemma \ref{diedai1}. Suppose that the conclusion holds true for $j=1,2,...,k$. Now we are going to show that the claim also holds for $j=k+1$. To this end, we introduce an auxiliary function $u_{k}:B_{1}\rightarrow \mathbb{R}$ as
	\begin{equation*}
	u_{k}(x):=\frac{u\left(\rho^{k}x\right)-l_{k}\left(\rho^{k}x\right)}{\rho^{k(1+\alpha)}}.
\end{equation*}
We can readily check that $u_{k}$ solves
\begin{equation} \label{Eq2.1}
	\Phi_{k}(\Abs{D{u_{k}}+\xi_{k}}, x){F}_{k}(D^2 {u_{k}}, x)+H_{k}(D{u_{k}}+\xi_{k}, x)	= {f_{k}}(x) \quad \text{in} \quad  B_{1}
\end{equation}
in the viscosity sense, where
\begin{align*}
	{F_{k}}(X,x):=&\rho^{k(1-\alpha)}F\left(\rho^{k(\alpha-1)}X,\rho^{k}x\right),\\
	{\Phi_{k}}(t,x):=&\frac{\Phi\left(\rho^{k\alpha}t,\rho^{k}x\right)}{\Phi\left(\rho^{k\alpha},\rho^{k}x\right)},\\
	H_{k}(t,x):=&\frac{\rho^{k(1-\alpha)}}{\Phi\left(\rho^{k\alpha},\rho^{k}x\right)}H\left(\rho^{k\alpha}t,\rho^{k}x\right),\\
	%{h_{k}}(x):=&\frac{\rho^{k(1-\alpha(1-m)))}}{\Phi\left(\rho^{k\alpha},\rho^{k}x\right)}h(\rho^{k}x),\\
	{f_{k}}(x):=&\frac{\rho^{k(1-\alpha)}}{\Phi\left(\rho^{k\alpha},\rho^{k}x\right)}f(\rho^{k}x),\\
	\xi_{k}:=&\rho^{-k\alpha}b_{k}.
\end{align*}		
It is standard to verify that ${F_{k}}$ is also a uniformly $(\lambda,\Lambda)$-elliptic operator. Through a straightforward calculation, we get
\begin{equation*}
	{\rm osc}_{{F_{k}}}(x,0)=\sup\limits_{M\in S^{d}\setminus \{0\}}\frac{\Abs{F\left(\rho^{k(\alpha-1)}M,\rho^{k}x\right)-F\left(\rho^{k(\alpha-1)}M,0\right)}}{\rho^{k(\alpha-1)}\|M\|}={\rm osc}_{{F}}(\rho^{k}x,0),
\end{equation*}
which together with \eqref{5.6} yields that
\begin{equation*}
	\|{\rm osc}_{{F_{k}}}\|_{L^{\infty}\left(B_{1}\right)}= 	\|{\rm osc}_{{F}}\|_{L^{\infty}\left(B_{\rho^{k}}\right)}\leq \|{\rm osc}_{{F}}\|_{L^{\infty}\left(B_{1}\right)}\leq \delta.
\end{equation*} 
By induction assumption, we have
$$\|{u_{k}}\|_{L^{\infty}\left(B_{1}\right)}\leq 1.$$
Note that $\Phi_{k}$ still satisfies the properities that the map $t\mapsto \frac{{\Phi_{k}}(t,x)}{t^{i(\Phi)}}$ is almost non-decreasing, the map $t\mapsto \frac{{\Phi_{k}}(t,x)}{t^{s(\Phi)}}$ is almost non-increasing with the same constant
$L\geq 1$ and ${\Phi_{k}}(1,x)=1$ for all $x\in B_{1}$.
In addition, combining the properties of $\Phi_{k}$ with \eqref{A4} and $\rho\in(0,\frac{1}{2})$, we get
\begin{equation*}
	|H_{k}(t,x)|\leq \frac{L\rho^{k(1-\alpha)}}{\rho^{k\alpha s(\Phi)}}\left(\mathcal{K}+\mathcal{M}\rho^{k\alpha m}|t|^{m}\right)=:\mathcal{K}_{k}+\mathcal{M}_{k}|t|^{m}.
\end{equation*} 
By virtue of the properties of $\Phi_{k}$ again, combining \eqref{5.6} with $\alpha\in \left(0,\frac{1}{1+s(\Phi)}\right]$, we deduce that
%Also, by virtue of the properities of $\Phi_{k}$, \eqref{5.6}, $\rho\in(0,\frac{1}{2})$ and $\alpha\in \left(0,\frac{1}{1+s(\Phi)}\right]$, one easily deduce that
\begin{equation*}
	\|{f_{k}}\|_{L^{\infty}\left(B_{1}\right)}\leq \frac{L\rho^{k(1-\alpha)}\|{f}\|_{L^{\infty}\left(B_{1}\right)}}{\rho^{k\alpha s(\Phi)}}\leq L\delta \rho^{k\left(1-\alpha(1+s(\Phi))\right)}\leq L\delta,
\end{equation*}
\begin{equation*}
	\mathcal{K}_{k}= L \rho^{k\left(1-\alpha(1+s(\Phi))\right)}\mathcal{K}\leq L\delta.
\end{equation*}
%\begin{equation}\label{hk}
%	\|{h_{k}}\|_{L^{\infty}\left(B_{1}\right)}\leq \frac{L\rho^{k(1-\alpha(1-m)))}}{\rho^{k\alpha s(\Phi)}}\|{h}\|_{L^{\infty}\left(B_{1}\right)}\leq L\delta \rho^{k\left(1-\alpha(1+s(\Phi)-m)\right)}.
%\end{equation}
%By means of $\alpha\in \left(0,\frac{1}{1+s(\Phi)}\right]$ again, we obtain
%\begin{equation*}
%	\mathcal{K}_{k}\leq L \rho^{k\left(1-\alpha(1+s(\Phi))\right)}\mathcal{K}\leq L\delta,\quad \mathcal{M}_{k}\leq L \rho^{k\left(1-\alpha(1+s(\Phi)-m)\right)}\mathcal{M}\leq L\delta.
%\end{equation*}
Now we analyze the quantity $\mathcal{M}_{k}\left(\abs{\xi_{k}}^{(m-i(\Phi))_{+}}+1\right)$. Applying \eqref{5.6} and $\xi_{k}=\rho^{-k\alpha}b_{k}$ to obtain
%Combining \eqref{hk} with $\xi_{k}=\rho^{-k\alpha}b_{k}$, we obtain
\begin{equation}\label{Hamilliang}
	\mathcal{M}_{k}\left(\abs{\xi_{k}}^{(m-i(\Phi))_{+}}+1\right)\leq L\delta \rho^{k\left(1-\alpha(1+s(\Phi)-m)\right)-k\alpha(m-i(\Phi))_{+}}\left(\abs{b_{k}}^{(m-i(\Phi))_{+}}+1\right).
\end{equation}		
If $0<m\leq i(\Phi)$,
then by taking advantage of \eqref{Hamilliang} and the fact that $1-\alpha(1+s(\Phi)-m)>0$, we get
$$\mathcal{M}_{k}\left(\abs{\xi_{k}}^{(m-i(\Phi))_{+}}+1\right)\leq 2L\delta \rho^{k\left(1-\alpha(1+s(\Phi)-m)\right)}\leq 2L\delta.$$
On the other hand, if $i(\Phi)<m\leq 1+i(\Phi)$, then
\begin{align*}
	\mathcal{M}_{k}\left(\abs{\xi_{k}}^{(m-i(\Phi))_{+}}+1\right)\leq L\delta \rho^{k\left(1-\alpha(1+s(\Phi)-i(\Phi))\right)}\left(\abs{b_{k}}^{m-i(\Phi)}+1\right)
	\leq L\delta\left(\abs{b_{k}}^{m-i(\Phi)}+1\right),
\end{align*}
where we use the fact that $1-\alpha(1+s(\Phi)-i(\Phi))>0$ in the last inequality.
It follows from the induction hypothesis \eqref{guina2} and $\rho<\frac{1}{2}$ that		
\begin{align*}
	|b_{k}|\leq |b_{0}|+\sum_{j=1}^{k}\abs{b_{j}-b_{j-1}}\leq C\sum_{j=1}^{k}\rho^{\alpha(j-1)}\leq \frac{C}{1-\rho^{\alpha}}\leq 2C.
\end{align*}
In summary, for $0<m\leq 1+i(\Phi)$, we have
\begin{align*}
	\mathcal{M}_{k}\left(\abs{\xi_{k}}^{(m-i(\Phi))_{+}}+1\right)\leq L\delta \left((2C)^{(m-i(\Phi))_{+}}+1\right).
\end{align*}
Now we select $\delta>0$ small enough so that
\begin{equation*}
 L\delta \left((2C)^{(m-i(\Phi))_{+}}+1\right)\leq \sigma
\end{equation*}	
with the $\sigma$ appearing in the statement of Lemma \ref{bijin}. Note that once we fix the value of $\varepsilon$ as in \eqref{5.3}, the quantity $\sigma$ in Lemma \ref{bijin} is determined accordingly. As a consequence, we arrive at
\begin{equation*}
	\max\left\{\|{\rm osc}_{{F_{k}}}\|_{L^{\infty}\left(B_{1}\right)},\|{f_{k}}\|_{L^{\infty}\left(B_{1}\right)},\mathcal{K}_{k},\mathcal{M}_{k}\left(\abs{\xi_{k}}^{(m-i(\Phi))_{+}}+1\right)\right\}\leq \sigma.
\end{equation*}	
At this moment, the assumptions in Lemma \ref{bijin} are satisfied. Thus, we can apply Lemma \ref{diedai1} to $u_{k}$ and obtain
\begin{equation*}
	\|u_{k}-\tilde{l}\|_{L^{\infty}(B_{\rho})}\leq \rho^{1+\alpha},
\end{equation*}
where $\tilde{l}(x)$ is an affine function of the form
 $\tilde{l}(x)=\tilde{a}+\tilde{b}\cdot x$ with $|\tilde{a}|+|\tilde{b}|\leq C(d,\lambda,\Lambda)$. In the sequel, we define the  approximating affine function $l_{k+1}$ as
\begin{equation*}
	l_{k+1}(x):=a_{k+1}+b_{k+1}\cdot x,
\end{equation*}
where
\begin{equation*}
	a_{k+1}:=a_{k}+\rho^{k(1+\alpha)}\tilde{a}\quad {\rm and}\quad     b_{k+1}:=b_{k}+\rho^{k\alpha}\tilde{b}.
\end{equation*}
Scaling back, we reach that
\begin{equation*}
	\|u-l_{k+1}\|_{L^{\infty}(B_{\rho^{k+1}})}=\rho^{k(1+\alpha)}\|u_{k}-\tilde{l}\|_{L^{\infty}(B_{\rho})}
	\leq \rho^{(k+1)(1+\alpha)}
\end{equation*}
	and
\begin{equation*}
	|a_{k+1}-a_{k}|+\rho^{k}|b_{k+1}-b_{k}|=\rho^{k(1+\alpha)}\left(\Abs{\tilde{a}}+\Abs{\tilde{b}}\right)\leq C\rho^{k(1+\alpha)}.
\end{equation*}
This completes the proof of the desired result.
\end{proof}
\begin{corollary}\label{small11}
	Suppose that the assumptions of Lemma \ref{diedai2} are in force. Then, there exists an affine function   $\overline{l}(x)=\overline{a}+\overline{b}\cdot x$ $(\overline{a}\in\mathbb{R},\overline{b}\in\rn)$ with
	$$|\overline{a}|+|\overline{b}|\leq C$$
	such that for each $0<r\leq \rho$ with $\rho$ being identical to that in Lemma \ref{diedai2}	
\begin{equation*}
	\|u-\overline{l}\|_{L^{\infty}(B_{r})}\leq Cr^{1+\alpha},
\end{equation*}
where the constant $C$ depends on $d,\lambda,\Lambda$ and $\alpha$.
\end{corollary}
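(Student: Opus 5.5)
The corollary follows from Lemma \ref{diedai2} by the standard telescoping argument. We build the limiting affine function from the sequence $\{l_j\}$ and then interpolate between consecutive dyadic-type radii $\rho^{j+1}<r\le\rho^j$.

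\textbf{Step 1: the limiting affine function.} By \eqref{guina2}, $|a_j-a_{j-1}|\le C\rho^{(j-1)(1+\alpha)}$ and $|b_j-b_{j-1}|\le C\rho^{(j-1)\alpha}$. Hence both $\{a_j\}$ and $\{b_j\}$ are Cauchy sequences, and we define $\overline a:=\lim_j a_j$ and $\overline b:=\lim_j b_j$. Since $a_0=0$ and $b_0=0$, summing the geometric series gives
\[
|\overline a|+|\overline b|\le \frac{C}{1-\rho^{1+\alpha}}+\frac{C}{1-\rho^{\alpha}}.
\]
This bound depends only on $d,\lambda,\Lambda,\alpha$, because $\rho=\rho(d,\lambda,\Lambda,\alpha)$. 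We also record the tail estimates
\[
|a_j-\overline a|\le \frac{C\rho^{j(1+\alpha)}}{1-\rho^{1+\alpha}},\qquad |b_j-\overline b|\le \frac{C\rho^{j\alpha}}{1-\rho^{\alpha}}.
\]
Consequently, $\sup_{B_{\rho^j}}|l_j-\overline l|\le |a_j-\overline a|+\rho^j|b_j-\overline b|\le C\rho^{j(1+\alpha)}$.

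\textbf{Step 2: estimate at radius $\rho^j$.} By the triangle inequality and \eqref{guina1},
\[
\|u-\overline l\|_{L^\infty(B_{\rho^j})}\le \|u-l_j\|_{L^\infty(B_{\rho^j})}+\|l_j-\overline l\|_{L^\infty(B_{\rho^j})}\le C\rho^{j(1+\alpha)}.
\]

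\textbf{Step 3: arbitrary $r$.} Given $0<r\le\rho$, choose $j\ge1$ with $\rho^{j+1}<r\le\rho^j$. Then $B_r\subset B_{\rho^j}$, so Step 2 gives
\[
\|u-\overline l\|_{L^\infty(B_r)}\le C\rho^{j(1+\alpha)}=C\rho^{-(1+\alpha)}\rho^{(j+1)(1+\alpha)}\le C\rho^{-(1+\alpha)}r^{1+\alpha}.
\]
Absorbing $\rho^{-(1+\alpha)}$ into the constant yields the claim.

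None of these steps is a real obstacle. The only point needing care is to check that every constant depends only on $d,\lambda,\Lambda,\alpha$. This holds because $\rho$ is fixed by Lemma \ref{diedai1} in terms of these quantities, and the constant in \eqref{guina2} depends only on $d,\lambda,\Lambda$.
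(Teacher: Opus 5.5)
Your proposal is correct and follows essentially the same route as the paper: you take $\overline a,\overline b$ as limits of the Cauchy sequences controlled by \eqref{guina2}, derive the same geometric tail bounds, and interpolate over $\rho^{j+1}<r\le\rho^{j}$ to absorb $\rho^{-(1+\alpha)}$ into the constant. You also make the bound $|\overline a|+|\overline b|\le C$ explicit by summing from $a_0=b_0=0$, which the paper leaves implicit.
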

\begin{proof}
	From Lemma \ref{diedai2}, we know that $\{a_{j}\}_{j\in\mathbb{N}}\subset \mathbb{R}$, $\{b_{j}\}_{j\in\mathbb{N}}\subset \rn$ are Cauchy sequences, hence, they converge. Now, we denote
\begin{equation*}
	\overline{a}:=\lim\limits_{j\rightarrow\infty}a_{j},
	\quad \overline{b}:=\lim\limits_{j\rightarrow\infty}b_{j},
\end{equation*}	
$$\overline{l}(x):=\overline{a}+\overline{b}\cdot x.$$
In the sequel, for any $n\geq j$, a combination of the triangle inequality and \eqref{guina2} yields that
\begin{align*}
	|a_{j}-a_{n}|\leq \sum_{k=j}^{n-1}\abs{a_{k}-a_{k+1}}\leq C\sum_{k=j}^{n-1}\rho^{k(1+\alpha)}\leq C\rho^{j(1+\alpha)} \frac{1-\rho^{(n-j)(1+\alpha)}}{1-\rho^{(1+\alpha)}}.
\end{align*}	
Letting $n\rightarrow \infty$, we get	
\begin{align}\label{511}
	|a_{j}-\overline{a}|\leq  \frac{C\rho^{j(1+\alpha)}}{1-\rho^{(1+\alpha)}}.
\end{align}	
In a similar way, we deduce
\begin{align}\label{512}
	|b_{j}-\overline{b}|\leq  \frac{C\rho^{j\alpha}}{1-\rho^{\alpha}}.
\end{align}	

We now claim that
\begin{equation*}
	\|u-\overline{l}\|_{L^{\infty}(B_{r})}\leq Cr^{1+\alpha}, \quad \forall r\in (0,\rho].
\end{equation*}	
In effect, by fixing $0<r\leq \rho$, we can find $j\in \mathbb{N}$ such that $\rho^{j+1}<r\leq \rho^{j}$.
Thus, combining \eqref{guina1} with \eqref{511} and \eqref{512}, we obtain
\begin{align*}
	\|u-\overline{l}\|_{L^{\infty}(B_{r})}&\leq \|u-\overline{l}\|_{L^{\infty}(B_{\rho^{j}})}\\&\leq \|u-{l_{j}}\|_{L^{\infty}(B_{\rho^{j}})}+\|l_{j}-\overline{l}\|_{L^{\infty}(B_{\rho^{j}})}\\
	&\leq  \rho^{j(1+\alpha)}+|a_{j}-\overline{a}|+\rho^{j}|b_{j}-\overline{b}|\\
	&\leq \rho^{j(1+\alpha)}\left(1+\frac{2C}{1-\rho^{\alpha}}\right)\\
	&\leq \frac{1}{\rho^{1+\alpha}}\left(1+\frac{2C}{1-\rho^{\alpha}}\right)r^{1+\alpha}.
\end{align*}
The proof is concluded.
\end{proof}

Finally, we are in a position to complete the proof of Theorem \ref{main}.

\begin{proof}[Proof of Theorem \ref{main}] The proof is divided into two cases.\\
{\bf Case 1.} $i(\Phi)\geq 0$. By the smallness regime, we may suppose that the assumption \eqref{5.6} of Corollary \ref{small11} is satisfied.
From Corollary \ref{small11}, we have known that $u$ is $C^{1,\alpha}$ at $0$. This is enough, in fact by standard translation
arguments we can prove that $u$ is also $C^{1,\alpha}$ for any point of $B_{1/2}$, thus getting that $u\in C^{1,\alpha}(B_{1/2})$. Consequently, we deduce the result of Theorem \ref{main} by Remark \ref{zhu2.5} and a covering argument.\\
{\bf Case 2.} $-1<i(\Phi)<0$. %By the smallness regime, we assume \eqref{small}. 
With the aid of Proposition \ref{qiyizhuantuihua}, we see that $u$ is a viscosity solution of the following equation
\begin{equation*}
	\hat{\Phi}(\abs{Du}, x)F(D^2 u, x)+\hat{H}(Du,x)	
	=\hat{f}(x) \quad  \text{in} \quad B_{1},
\end{equation*}
where
\begin{align*}
	\hat{\Phi}(\abs{t},x):=&\abs{t}^{-i(\Phi)}{\Phi}(\abs{t},x),\quad
	\hat{H}(t,x):=\abs{t}^{-i(\Phi)}{H}(t,x),\quad
	\hat{f}(x):=\abs{Du}^{-i(\Phi)}f(x).
\end{align*}
Note that $\hat{\Phi}$ satisfies the properties that the map $t\mapsto {\hat{\Phi}(t,x)}$ is almost non-decreasing, the map $t\mapsto \frac{\hat{\Phi}(t,x)}{t^{s(\Phi)-i(\Phi)}}$ is almost non-increasing with the same constant
$L\geq 1$, and $\hat{\Phi}(1,x)=1$ for all $x\in B_{1}$. Moreover, in view of \eqref{A4} and $0<-i(\Phi)<m-i(\Phi)\leq 1$,  we have
$$|	\hat{H}(t,x)|\leq |t|^{-i(\Phi)}\left(\mathcal{K}+\mathcal{M}|t|^{m}\right)\leq \hat{\mathcal{K}}+\hat{\mathcal{M}}|t|^{m-i(\Phi)}\quad {\rm for\;all} \;(t,x)\in \rn\times B_{1},$$
where $\hat{\mathcal{K}}:=\mathcal{K}$ and $\hat{\mathcal{M}}:=\mathcal{K}+\mathcal{M}$. Now we apply (iii) of Proposition \ref{jin2} to obtain
\begin{equation}\label{lip1}
	[u]_{C^{0,1}(B_{1/2})}\leq C
\end{equation}
for a universal constant $C>0$. %$C=C(d,\lambda,\Lambda,L,m,i(\Phi),C_{F},\theta)$>0.
%Since that $u$ is a Lipschitz continuous function, and 
As a consequence, the gradient $Du$ is bounded almost everywhere. Then we can estimate
\begin{equation*}
	\|\hat{f}\|_{L^{\infty}\left(B_{1/2}\right)}\leq C^{-i(\Phi)}\|{f}\|_{L^{\infty}\left(B_{1/2}\right)}. %\quad {\rm and}\quad
	%\|\hat{h}\|_{L^{\infty}\left(B_{1/2}\right)}\leq C^{-i(\Phi)}\delta.
\end{equation*}
%$$|	\hat{H}(Du,x)|\leq C^{-i(\Phi)}\left(\mathcal{K}+\mathcal{M}|Du|^{m}\right)=:\hat{\mathcal{K}}+\hat{\mathcal{M}}|t|^{m}.$$
 %Combining \eqref{lip1} with $\|{f}\|_{L^{\infty}\left(B_{1}\right)}\leq \delta$ and $\mathcal{K},\mathcal{M}\leq \delta$, we arrive at
%\begin{equation*}
%	\|\hat{f}\|_{L^{\infty}\left(B_{1/2}\right)},\hat{\mathcal{K}},\hat{\mathcal{M}}\leq C^{-i(\Phi)}\delta. %\quad {\rm and}\quad
	%\|\hat{h}\|_{L^{\infty}\left(B_{1/2}\right)}\leq C^{-i(\Phi)}\delta.
%\end{equation*}
At this point, we reduce the singular case to the degenerate case, and $\hat{\Phi}$, $\hat{H}$ and $\hat{f}$ satisfy the assumptions \eqref{A3}-\eqref{A5}. Therefore, for $-1<i(\Phi)<0$, we can obtain the $C^{1,\alpha}$-regularity with $\alpha$ satisfying \eqref{exponent} by repeating the previous arguments. The proof is complete.
\end{proof}
%%%%%%%%%%%%%%%%%%%%%%%%%%%%%%%%%%%%%%%%%%%%%%%%%%%%%%%%%%%%%%%%%%%%%%%%%%%%%%%%%%%%%%%%%%%%%%%%%%%%%%%%%%%%%%%
\section*{Acknowledgments}
This work is supported by the National Natural Science Foundation of China (NSFC Grant No.12571103), Natural Science Foundation of Tianjin (Grant No. 25JCQNJC01400), and Young Scientific and Technological Talents (Level Three) in Tianjin.
%%%%%%%%%%%%%%%%%%%%%%%%%%%%%%%%%%%%%%%%
\section*{Data availability} Data sharing is not applicable to this article as obviously no datasets were generated or analyzed during the current study.
%%%%%%%%%%%%%%%%%%%%%%%%%%%%%%%%%%%%%%
\section*{Conflict of interest} Author states no conflict of interest.
%%%%%%%%%%%%%%%%%%%%%%%%%%%%%%%%%%%%%%%


\begin{thebibliography}{99}
\bibitem{Adams2003}
R.A. Adams, J.J.F. Fournier, Sobolev Spaces, 2nd ed. Pure Appl. Math. vol.140, Elsevier/Academic Press,Amsterdam, NewYork, (2003).

\bibitem{Ricarte}
D.J. Ara$\acute{\rm u}$jo, G.C. Ricarte, E.V. Teixeira, Geometric gradient estimates for solutions to degenerate elliptic equations, Calc. Var. Partial Differ. Equ. 53(3–4) (2015), 605–625. 	

\bibitem{Attouchi20147}
A. Attouchi, M. Parviainen, E. Ruosteenoja, $C^{1,\alpha}$-regularity for the normalized $p$-Poisson problem, J. Math. Pures. Appl. 108 (2017), 553–591.

\bibitem{Andrade}
P. Andrade, T. Nascimento, Optimal regularity for degenerate elliptic equations with Hamiltonian
terms, arXiv:2508.03924.

\bibitem{Birindelli2004}
I. Birindelli, F. Demengel, Comparison principle and Liouville type results for singular fully nonlinear operators, Ann. Fac. Sci. Toulouse Math. 13(6)
(2004), 261–287.

\bibitem{Birindelli2006}
I. Birindelli, F. Demengel, First eigenvalue and maximum
principle for fully nonlinear singular operators. Adv. Differential Equations, 11(1) (2006), 91–119.

\bibitem{Birindelli2007CPAA}
I. Birindelli, F. Demengel, Eigenvalue, maximum principle and regularity for fully nonlinear homogeneous operators, Commun. Pure Appl. Anal. 6 (2007), 335–366.

\bibitem{Birindelli2010JDE}
I. Birindelli, F. Demengel, Regularity and uniqueness of the first eigenfunction for singular fully
nonlinear operators, J. Differ. Equations 249 (2010), 1089–1110.

\bibitem{Birindelli2012NON}
I. Birindelli, F. Demengel, Regularity for radial solutions of degenerate fully nonlinear equations, Nonlinear Anal. 75(17) (2012),
6237–6249.

\bibitem{Birindelli2014ESAIM}
I. Birindelli, F. Demengel,
$C^{1, \beta}$ regularity for Dirichlet problems associated to fully nonlinear degenerate elliptic equations, ESAIM Control Optim. Calc. Var. 20 (2014), 1009-1024.

\bibitem{Birindelli2015}
I. Birindelli, F. Demengel, H\"{o}lder regularity of the gradient for solutions of fully nonlinear equations with sublinear first order term. In Geometric methods in PDE's, pp. 257-268, Springer INdAM Ser. 13, Springer, Cham, (2015).

\bibitem{B-Demengel2016}
I. Birindelli, F. Demengel, Fully nonlinear operators with Hamiltonian: H\"{o}lder regularity of the gradient,
NoDEA Nonlinear Differ. Equ. Appl. 23 (4) (2016), 17 pp.

\bibitem{B-Demengel2019}
I. Birindelli, F. Demengel, F. Leoni, $C^{1,\gamma}$ regularity for singular or degenerate fully nonlinear equations and applications, NoDEA Nonlinear Differ. Equ. Appl. 26(5) (2019), 13 pp.

\bibitem{Bronzi2020}
A.C. Bronzi, E.A. Pimentel, G.C. Rampasso, E.V. Teixeira, Regularity of solutions to a class of variable exponent fully nonlinear elliptic equations. J. Funct. Anal. 279(12) (2020), 108781. 	

\bibitem{Banerjee2020}
A. Banerjee, I.H. Munive, Gradient continuity estimates for the normalized $p$-Poisson equation, Commun. Contemp. Math. 22 (2020).	

\bibitem{Silva2023}
E.C. Bezerra J$\acute{\rm u}$nior, J.V. da Silva, G.C. Rampasso, G.C. Ricarte, Global regularity for a class of fully nonlinear PDEs with unbalanced variable degeneracy, J. Lond. Math. Soc. 108(2) (2023), 622–665.

\bibitem{Baasandorj}
S. Baasandorj, S.-S. Byun, K.-A. Lee, S.-C. Lee, $C^{1,\alpha}$-regularity for a class of degenerate/singular fully nonlinear elliptic equations, Interfaces Free Bound. 26(2) (2024), 189–215.

\bibitem{Byun2024}
S. Baasandorj, S.-S. Byun, J. Oh, Second derivative $L^{\delta}$-estimates for a class of singular fully nonlinear elliptic equations, Nonlinear Anal. 249 (2024), 113630.

\bibitem{Byun2025}
 S.-S. Byun, H. Kim, J. Oh, Interior $W^{2,\delta}$ type estimates for degenerate fully nonlinear elliptic equations with $L^{n}$ data, J. Funct. Anal. 289(6) (2025), 111007.
	
	
\bibitem{Caffarelli1989}	
L.A. Caffarelli, Interior a priori estimates for solutions of fully nonlinear equations, Ann. Math. 130(1) (1989), 189–213.		

\bibitem{Crandle1}
M.G. Crandall. H. Ishii, P.-L. Lions, User's guide to viscosity solutions of second order partial differential equations, Bull. Am. Math. Soc. 27 (1992), 1–67.

\bibitem{Caff1}
L.A. Caffarelli, X. Cabr$\acute{\rm e}$, Fully Nonlinear Elliptic Equations, Colloquium Publications, 43. American Mathematical Society, Providence, R.I. (1995).

\bibitem{Leoni} 	
I. Capuzzo Dolcetta, F. Leoni, A. Porretta, H\"{o}lder estimates for degenerate elliptic equations with coercive Hamiltonians, Trans. Am. Math. Soc. 362(9) (2010), 4511–4536.

\bibitem{Davil2009}
J. Davil$\acute{\rm a}$, P. Felmer, A. Quaas, Alexandroff-Bakelman-Pucci estimate for singular or degenerate fully nonlinear elliptic equations, C. R. Math. Acad. Sci. Paris 347 (2009), 1165–1168.

\bibitem{Davil2010}
J. Davil$\acute{\rm a}$, P. Felmer, A. Quaas,
Harnack inequality for singular fully nonlinear operators and some existence results, Calc. Var. Partial Differ. Equ. 39 (2010), 557–578.

\bibitem{Silva2020}
J.V. da Silva, G.C. Ricarte, Geometric gradient estimates for fully nonlinear models with nonhomogeneous degeneracy and applications, Calc. Var. Partial Differ. Equ. 59(5) (2020), Paper No.161.

\bibitem{Silva-Nornberg2021}
J.V. da Silva,  G. Nornberg, Regularity estimates for fully nonlinear elliptic PDEs with general Hamiltonian terms and unbounded ingredients, Calc. Var. Partial Differ. Equ. 60 (2021), Paper No. 202, 40 pp.

\bibitem{Fili}
C. De Filippis, Regularity for solutions of fully nonlinear elliptic equations with nonhomogeneous degeneracy, Proc. R. Soc. Edinb. Sect. A 151(1)  (2021), 110–132.

\bibitem{Evans}
L.C. Evans, Classical solutions of fully nonlinear, convex, second-order elliptic equations, Commun.
Pure Appl. Math. 35(3) (1982), 333–363.

\bibitem{Fleming} 	
W. Fleming, H.M. Soner, Controlled Markov Processes and Viscosity Solutions, Applications of Mathematics 25,
Springer-Verlag, (1991).

\bibitem{Fang}
Y. Fang, V.D. R$\breve{\rm a}$dulescu, C. Zhang, Regularity of solutions to degenerate fully nonlinear elliptic equations with variable exponent, Bull. Lond. Math. Soc. 53(6) (2021), 1863–1878.

\bibitem{Lions1}
H. Ishii, P.-L. Lions, Viscosity solutions of Fully-Nonlinear Second Order Elliptic Partial Differential Equations, J. Differ. Equations 83 (1990), 26–78.

\bibitem{Imbert2011JDE}
C. Imbert, Alexandroff-Bakelman-Pucci estimate and Harnack inequality for degenerate/singular fully
non-linear elliptic equations, J. Differ. Equations 250 (2011), 1553–1574.	

\bibitem{Imbert1}
C. Imbert, L. Silvestre, $C^{1,\alpha}$ regularity of solutions of some degenerate fully non-linear elliptic equations, Adv. Math. 233 (2013), 196–206.

\bibitem{Junges2010}
T. Junges Miotto, The Aleksandrov-Bakelman-Pucci estimates for singular fully nonlinear operators,
Commun. Contemp. Math. 12(4) (2010), 607–627.

\bibitem{Krylov1979}
N.V. Krylov, M.V. Safonov, An estimate for the probability of a diffusion process hitting a set of positive measure, Dokl. Akad. Nauk SSSR 245(1) (1979), 18–20.

\bibitem{Krylov1980}
N.V. Krylov,  M.V. Safonov, A property of the solutions of parabolic equations with measurable coefficients, Izv. Akad. Nauk SSSR Ser. Mat. 44(1) (1980), 161–175.

\bibitem{Krylov1}
N.V. Krylov, Boundedly inhomogeneous elliptic and parabolic equations, Izv. Akad. Nauk SSSR Ser.
Mat. 46(3) (1982), 487–523.

\bibitem{Lions1985}
P.-L. Lions, Quelques remarques sur les problemes elliptiques quasilineaires du second ordre, J. Anal. Math. 45 (1985), 234–254.

\bibitem{Lions1989} 		
J.M. Lasry, P.-L. Lions, Nonlinear elliptic equations with singular boundary conditions and stochastic control with state constraints. I. The model problem, Math. Ann. 283(4) (1989), 583–630.	

\bibitem{Nornberg2019}
G. Nornberg, $C^{1,\alpha}$ regularity for fully nonlinear elliptic equations with superlinear growth in the gradient, J. Math. Pures Appl. 9(128) (2019), 297-329.
\end{thebibliography}
\end{document}